\documentclass[11pt]{amsart}
\usepackage{amsmath,amsthm,amsfonts,latexsym,amscd,amssymb,enumerate,color,hyperref,mathtools,bm}
\usepackage{microtype}
\usepackage[UKenglish]{babel}
\usepackage[UKenglish]{isodate}
\cleanlookdateon
\input{xypic}
\usepackage[margin=2.5cm]{geometry}
\usepackage{lineno}
\usepackage{rotating}
\usepackage{multirow}
\usepackage{multicol}
\usepackage{graphicx}
\usepackage{epstopdf}
\usepackage[usenames,dvipsnames]{xcolor}
\usepackage{inputenc}
\usepackage{tikz-cd}
\usepackage{booktabs}
\usepackage{enumitem}
\usepackage{etoolbox}
\newcounter{dummy}
\makeatletter  
\newcommand\myitem[1][]{\item[#1]\refstepcounter{dummy}\def\@currentlabel{#1}}   
\makeatother 
\usepackage{hyperref}
\usepackage{url}
\hypersetup{colorlinks,allcolors=blue}
\usepackage{comment}
\usepackage{cleveref}

\DeclareMathOperator{\cd}{CD}

\DeclareMathOperator{\Ric}{Ric}
\DeclareMathOperator{\Hess}{Hess}

\DeclareMathOperator{\vol}{vol}
\DeclareMathOperator{\diam}{diam}

\DeclareMathOperator{\supp}{supp}

\DeclareMathOperator{\RCD}{RCD}
\DeclareMathOperator{\CCD}{CD}
\DeclareMathOperator{\MCP}{MCP}

\DeclareMathOperator{\TCBA}{TCBA}
\DeclareMathOperator{\TCD}{TCD}
\DeclareMathOperator{\TMCP}{TMCP}

\newcommand{\RR}{\mathbb{R}}

\newcommand{\calF}{\mathcal{F}}

\newcommand{\calJ}{\mathcal{J}}
\newcommand{\calI}{\mathcal{I}}

\newcommand{\mfs}{\mathfrak{s}}
\newcommand{\mfc}{\mathfrak{c}}
\newcommand{\mfm}{\mathfrak{m}}

\newcommand{\mfq}{\mathfrak{q}}
\newcommand{\msd}{\mathsf{d}}
\newcommand{\eps}{\varepsilon}

\newcommand{\LLS}{(X,\msd,\ll,\leq,\tau)}
\newcommand{\LMS}{(X,\msd,\mfm,\ll,\le,\tau)}

\numberwithin{equation}{section}
\numberwithin{table}{section}

\newtheorem{theorem}{Theorem}[section]
\newtheorem{lemma}[theorem]{Lemma}
\newtheorem{proposition}[theorem]{Proposition}

\newtheorem{corollary}[theorem]{Corollary}

\theoremstyle{definition}
\newtheorem{definition}[theorem]{Definition}

\newtheorem{example}[theorem]{Example}
\newtheorem{remark}[theorem]{Remark}

\newtheorem*{ack}{Acknowledgements}
\makeatletter
\def\@setthanks{\vspace{-\baselineskip}\def\thanks##1{\@par##1}\thankses}
\makeatother

\usepackage[normalem]{ulem}

\title[Doubling for chronological diamonds in Lorentzian geometry]{Doubling for chronological diamonds in Lorentzian geometry}

\author[Che]{Mauricio Che}
\address[Che]{Faculty of Mathematics, University of Vienna, Oskar-Morgenstern-Platz 1, 1090 Wien, Austria}
\email{mauricio.adrian.che.moguel@univie.ac.at}

\author[Gieger]{Sebastian Gieger}
\address[Gieger]{Faculty of Mathematics, University of Vienna, Oskar-Morgenstern-Platz 1, 1090 Wien, Austria}
\email{sebastian.gieger@univie.ac.at}

\author[S\"amann]{Clemens S\"amann}
\address[S\"amann]{Faculty of Mathematics, University of Vienna, Oskar-Morgenstern-Platz 1, 1090 Wien, Austria}
\email{clemens.saemann@univie.ac.at}

\date{\today}

\begin{document}

\begin{abstract}
We define doubling conditions for measured Lorentzian length spaces in terms of chronological diamonds, and prove that such conditions are implied by suitable timelike curvature bounds. In particular, for the first time, we relate doubling to curvature bounds in Lorentzian geometry. As a consequence, we obtain compactness results for Lorentzian generalized cones with respect to the Lorentzian Gromov--Hausdorff convergence introduced by Mondino--S\"amann. 
\bigskip

\noindent
\emph{Keywords:} Metric geometry, Lorentzian geometry, Lorentzian length spaces, curvature bounds, doubling measures, Gromov's compactness
\medskip

\noindent
\emph{MSC2020:}
53C23, 
51K10, 
53C50, 
53B30 
\end{abstract}

\maketitle

\section{Introduction}

A metric measure space $(X,d,\mfm)$ is doubling if it is possible to uniformly bound the quotient $\mfm(B_{2r}(x))/\mfm(B_{r}(x))$ over the space. This condition is relevant in metric geometry, among other reasons, because it yields compactness theorems about classes of spaces satisfying natural geometric constraints. Most notably, the celebrated Gromov's compactness theorem, which asserts that, for any $K\in\mathbb{R}$, $N>0$, $D>0$, the class of Riemannian manifolds with Ricci curvature bounded below by $K$, dimension bounded above by $N$ and diameter bounded above by $D$, is precompact with respect to Gromov--Hausdorff convergence \cite{Gromov-poly,Gromov-1981}, is an example of a result that can be readily obtained as a consequence of doubling conditions. Moreover, this particular result has motivated the study of synthetic curvature bounds, yielding the very fruitful theories of Alexandrov spaces (see \cite{BBI,BGP}) and $\CCD$/$\RCD$ spaces (see \cite{ambrosio-gigli-savare,gigli,lott-villani,sturm1,sturm2}).

In Lorentzian signature, a synthetic theory has been proposed by Kunzinger--S\"amann in \cite{kunzinger-saemann2018} with the notion of Lorentzian (pre-)length space as the main object. In this direction, very recently, Mondino--S\"amann obtained a compactness criterion with respect to a Lorentzian version of Gromov--Hausdorff convergence \cite{mondino-saemann2025}. Crucially, this result is stated in terms of covers of Lorentzian length spaces consisting of causal diamonds, effectively playing the role of $\varepsilon$-nets in its positive-signature counterpart. 

With that motivation in mind, in this paper we aim to establish doubling conditions in the setting of Lorentzian length spaces, in terms of chronological diamonds. Our main result is the following doubling condition, which can be understood as a version of the Bishop--Gromov inequality in Lorentzian length spaces satisfying Ricci and sectional curvature bounds in the sense of \cite{cavalletti-mondino2024,kunzinger-saemann2018}. Moreover, this is new even for smooth weighted Lorentzian manifolds.

\begin{theorem}\label{thm:main1}
Let $k,K\in\mathbb R$, $N,\rho>1$, and $0<\Lambda<D_k/2$, where $D_k=\frac{\pi}{\sqrt{-k}}$ for $k<0$ and $D_k=+\infty$ otherwise. Then there exists a constant $C=C(k,K,N,\rho,\Lambda)>0$ such that the following holds: Let $X$ be a globally hyperbolic, timelike non-branching, regular measured Lorentzian length space. Assume that $X$ and its causally reversed  structure satisfy the $\TMCP^e(K,N)$-condition, and that $X$ satisfies the global $\TCBA(k)$-condition. If $I_1\subset I_2$ are chronological diamonds in $X$ such that $\diam^\tau(I_2)\leq \min(\Lambda,\rho\diam^\tau(I_1))$, then 
\[
\mfm(I_2)\leq C\, \mfm(I_1).
\]
\end{theorem}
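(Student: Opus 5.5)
Write $I_2=I(p_2,q_2)$ and $I_1=I(p_1,q_1)$, with $p_2\le p_1\ll q_1\le q_2$. Set $T_i=\tau(p_i,q_i)=\diam^\tau(I_i)$, so that $T_2\le \min(\Lambda,\rho T_1)$. The plan is to compare both measures with volumes of subsets of diamonds in model spaces. The lower bound for $\mfm(I_1)$ will come from the $\TMCP^e(K,N)$ condition. The upper bound for $\mfm(I_2)$ will come from the same condition, applied with vertex at $p_2$ or $q_2$. The $\TCBA(k)$ condition supplies the geometric control that makes the two comparable, namely quantitative "thickness" of $I_1$ relative to $I_2$.

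\textbf{Step 1: upper bound for $\mfm(I_2)$.} We use the Bishop--Gromov type consequence of $\TMCP^e(K,N)$ (Cavalletti--Mondino) for the forward cone at $p_2$. We disintegrate $\mfm$ along timelike geodesics emanating from $p_2$; this is legitimate because $X$ is timelike non-branching and globally hyperbolic. On each ray the conditional densities $h(t)$ satisfy $h(t)/h(s)\le \mfs_{K/(N-1)}(t)^{N-1}/\mfs_{K/(N-1)}(s)^{N-1}$ for $s\le t$. Every point of $I_2$ lies on such a ray at parameter $t\le T_2\le\Lambda$. The aim is to bound $\mfm(I_2)$ by a constant times $\mfm$ of the portion of $I_2$ at proper time between $T_2/4$ and $T_2/2$ from $p_2$ (or near the midpoint). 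The reversed structure, used with vertex $q_2$, handles the half of $I_2$ near $q_2$ symmetrically.

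\textbf{Step 2: locate a sub-diamond of $I_2$ inside $I_1$.} Choose a midpoint $m$ of a maximal geodesic from $p_1$ to $q_1$. Consider the diamond $J=I(m^-,m^+)$, where $m^\pm$ lie on that geodesic at distance $T_1/4$ from $m$. Then $J\subset I_1$ and $\diam^\tau(J)=T_1/2\ge T_2/(2\rho)$. It remains to show $\mfm(J)\ge c\,\mfm(I_2)$. Here $\TCBA(k)$ enters: the condition $\Lambda<D_k/2$ guarantees that triangle comparison is valid at all scales involved. Comparing triangles with vertices $p_2,m^\pm,q_2$ against the model space of constant curvature $k$ should show that the cone from $p_2$ through $J$ subtends a set of rays with uniformly positive $\mfm$-proportion of all rays reaching the middle portion of $I_2$. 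Equivalently, combined with the MCP ray-contraction estimate, points in that middle portion can be transported along $p_2$-geodesics (and reversed $q_2$-geodesics) into $J$. This transport contracts the measure by at most a factor depending on $K,N,\rho,\Lambda$, and the contraction-by-MCP inequality $\mfm(\text{contracted set})\ge (\text{ratio})^{N}\cdot\mfm(\text{set})$ up to $\mfs_K$ factors gives the bound. An alternative is to iterate: cover $I_2$ by boundedly many diamonds of size comparable to $T_1$. The number needed is controlled via $\TCBA(k)$, and each is compared to $J$ by chaining.

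\textbf{Main obstacle.} The hard step is the geometric one in Step 2. The $\TCBA(k)$ bound is an upper curvature bound, and it must be converted into a \emph{lower} bound on the "size" of $I_1$ inside $I_2$ in the direction transverse to the time axis. Concretely, we need that $p_2$-geodesics hitting $I_2$ near its middle can be funnelled into $J$ with controlled distortion. I would first try to prove a model statement directly by comparison: for $x\in I_2$, the geodesic $p_2x$ extended or contracted meets $J$ after rescaling time by a factor bounded in terms of $\rho,k,\Lambda$. This would use reverse triangle comparison for the triangles $(p_2,x,q_2)$ and $(p_1,m,q_1)$, together with regularity to exclude degenerate null portions. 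If this homothety-type map is well defined, MCP immediately gives $\mfm(J)\ge C^{-1}\mfm(I_2)$, and hence $\mfm(I_2)\le C\,\mfm(I_1)$.
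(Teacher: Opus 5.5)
\textbf{There is a genuine gap in Step 2, and it cannot be fixed by a better comparison estimate.}

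The localization at $p_2$ gives you only one-dimensional $\MCP(K,N)$ bounds on each needle. The quotient measure $\mfq$ is completely uncontrolled, so the only inequalities you can integrate against $\mfq$ are ones that hold needle by needle. Claims such as ``the needles through $J$ carry a uniformly positive proportion of the mass'', or your Step 1 claim that $\mfm(I_2)$ is controlled by the band $\{T_2/4\le\tau(p_2,\cdot)\le T_2/2\}$, cannot be obtained this way.

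Moreover, from the vertex $p_2$ the needle-wise statement you would need is false once $p_1\neq p_2$. Take $2$-dimensional Minkowski space in null coordinates, so that $\tau^2=\Delta u\,\Delta v$. Let $I_2=(0,T_2)^2$ and $I_1=(0,T_2/3)\times(T_2/3,2T_2/3)$, so $\rho=3$.
\begin{itemize}
\item Every needle $v=\lambda u$ from $p_2$ with $\lambda\le 1$ misses $I_1$ altogether.
\item For $\lambda<1/16$ such a needle leaves $I_2$ before it reaches your band.
\item Points near the spacelike corners of $I_2$ are $\tau$-close to both $p_2$ and $q_2$, so switching to the reversed structure at $q_2$ does not rescue Step 1.
\end{itemize}
Hence the ``homothety-type map'' from $p_2$ into $J$ does not exist. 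The covering/chaining alternative is circular: comparing a far-away small diamond with $J$ is already the enlargement inequality you are trying to prove. You also give no argument for the covering number, and an upper curvature bound alone does not control it.

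\textbf{What is missing is a reduction to diamonds with a common vertex.} After that reduction, $\TCBA(k)$ is used to produce a \emph{per-needle} proportionality, not a transversal thickness of $I_1$.
\begin{enumerate}
\item Interpose $\tilde I_2:=I(p_1,q_2)$. Then $I_1\subset\tilde I_2\subset I_2$, with $\diam^\tau(\tilde I_2)\le\rho\,\diam^\tau(I_1)$ and $\diam^\tau(I_2)\le\rho\,\diam^\tau(\tilde I_2)$. The two pairs share the vertex $p_1$, respectively $q_2$; the latter case is handled by the causally reversed structure. So it suffices to treat $I(x,y_1)\subset I(x,y_2)$ and square the constant.
\item Use comparison triangles with vertex $x$ in $\mathbb{L}^2(k)$, together with $\bar p\le\bar q\Rightarrow p\le q$. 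This first gives $T_0=T_0(k,\rho)>0$ such that the geodesic $\gamma$ from $x$ to $y_2$ stays in $I(x,y_1)$ up to parameter $T_0$. This reduces matters to the aligned pair $I(x,\gamma(T_0))\subset I(x,y_2)$.
\item For the aligned pair, find $c=c(k,\rho,\Lambda)$ such that every timelike geodesic $\xi$ from $x$ with $\xi(1)\in I(x,y_2)$ has $\xi(c)\in I(x,\gamma(T_0))$. This is an explicit computation in anti-de Sitter space, and it is where $\Lambda\le\pi/(2\sqrt{-k})$ is needed.
\item Disintegrate from $x$. The previous step says $c\,t_{\alpha,2}\le t_{\alpha,1}$ on every needle. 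The one-dimensional Bishop--Gromov inequality then gives $\mfm_\alpha(I_2)\le C(K,N,c,\Lambda)\,\mfm_\alpha(I_1)$ uniformly in $\alpha$. Integrating against $\mfq$ requires no information about $\mfq$.
\end{enumerate}
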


The conditions on the timelike curvature in Theorem~\ref{thm:main1} are not as general as one would wish for. Moreover, by \cite{dajczer-nomizu80}, any Lorentzian manifold with dimension at least 3 and timelike Ricci curvature bounded in absolute value must be an Einstein manifold. Therefore, our conditions restricted to the smooth setting, e.g. $X = (M,d_g,\vol_g)$ for some Lorentzian manifold $(M^n,g)$ with $n\geq 3$, would imply that $M$ is an Einstein manifold. However, this does not rule out more general Lorentzian manifolds with timelike curvature bounded above and a timelike Bakry--Emery--Ricci tensor bounded below, which by work of Cavalletti--Mondino \cite{cavalletti-mondino2024} (see also \cite{mccann2020}), is equivalent to a $\TMCP^e$-condition for the corresponding weighted volume measure. Therefore, our work covers this more general setting (see Section \ref{ss:generalized cones bishop gromov}).

We then introduce a notion of $\eps$-separated sets, i.e., sets of pairwise disjoint chronological diamonds $\{I(p_i,q_i)\}_{i\in \Omega}$, each contained in a common set and $\tau(p_i,q_i) = \eps$ for all $i\in\Omega$ (see \Cref{def:eps-separated set}). We prove the following uniform boundedness result for such $\varepsilon$-separated sets.

\begin{theorem}\label{thm:main2}
For any $k,K\in\RR$, $N>1$, $0<\eps<\Lambda<D_k/2$, there is a constant $\widetilde{C}>0$ such that the following holds: Let $X$ 
be a globally hyperbolic, timelike non-branching, regular, measured Lorentzian length space, and $X$ (and its causally reversed structure) satisfies the $\TMCP^e(K,N)$- and global $\TCBA(k)$-conditions. Let $\tilde{I}$ be a chronological diamond such that $\diam^\tau(\tilde{I})\leq \Lambda$. Then any $\eps$-separated set in $\tilde{I}$ has cardinality at most $\widetilde{C}$.
\end{theorem}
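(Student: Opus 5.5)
The plan is the usual packing argument: compare the measure of each small diamond with the measure of the big diamond $\tilde I=I(p,q)$ using Theorem~\ref{thm:main1}, and then use disjointness. Fix an $\eps$-separated set $\{I(p_i,q_i)\}_{i\in\Omega}$ in $\tilde I$. Since the diamonds are pairwise disjoint and contained in $\tilde I$,
\[
\sum_{i\in\Omega}\mfm(I(p_i,q_i))\le \mfm(\tilde I).
\]
It therefore suffices to prove $\mfm(\tilde I)\le C\,\mfm(I(p_i,q_i))$ for every $i$, with $C$ depending only on $k,K,N,\eps,\Lambda$. Then $|\Omega|\le C$, provided each $\mfm(I(p_i,q_i))>0$. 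Positivity holds because a chronological diamond is open and nonempty once $\tau(p_i,q_i)=\eps>0$, and under $\TMCP^e$ the reference measure has full support.

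Theorem~\ref{thm:main1} requires nested diamonds, but $I(p_i,q_i)\subset \tilde I$ is in general not of the form ``small diamond inside a comparable big one''. The ratio $\diam^\tau(\tilde I)/\diam^\tau(I(p_i,q_i))$ is at most $\Lambda/\eps$, because $\diam^\tau(I(p_i,q_i))=\tau(p_i,q_i)=\eps$ (modulo checking that the $\tau$-diameter of a diamond is $\tau$ of its vertices, which should follow from the reverse triangle inequality). Hence one could apply Theorem~\ref{thm:main1} directly with $I_1=I(p_i,q_i)$, $I_2=\tilde I$ and $\rho=\Lambda/\eps$ (or $\rho=\max(2,\Lambda/\eps)$ to ensure $\rho>1$). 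Provided the theorem is stated for arbitrary nested diamonds, as it is, this gives $\mfm(\tilde I)\le C(k,K,N,\Lambda/\eps,\Lambda)\,\mfm(I(p_i,q_i))$ immediately, and one can take $\widetilde C:=C$.

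Most of the work is the hypothesis check. The condition $\diam^\tau(\tilde I)\le\min(\Lambda,\rho\,\diam^\tau(I_1))$ holds since $\diam^\tau(\tilde I)\le \Lambda=\rho\eps$. The main thing to verify is the identity $\diam^\tau(I(p_i,q_i))=\eps$, or at least $\ge\eps$, which is the direction needed. I would prove it as follows. Since $X$ is a regular length space, take a maximizing curve from $p_i$ to $q_i$ and points $x,y$ on it near the endpoints. Then $x,y\in I(p_i,q_i)$ and $\tau(x,y)\to\eps$, which gives $\ge\eps$. The bound $\le\eps$ follows from the reverse triangle inequality $\tau(p_i,x)+\tau(x,y)+\tau(y,q_i)\le\tau(p_i,q_i)$. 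If the definition of $\diam^\tau$ turns out to be a supremum over $\tau$ within the closure instead, the same argument still works.

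If Theorem~\ref{thm:main1} were restricted to, say, concentric diamonds, the fallback would be a chaining argument. One would join the centre of $I(p_i,q_i)$ to the vertices of $\tilde I$ through a bounded number (depending on $\Lambda/\eps$) of intermediate nested diamonds of comparable diameter, and apply the doubling inequality at each link. I do not expect this to be needed.
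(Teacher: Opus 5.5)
Your proposal is correct and is essentially the paper's proof: apply the enlargement theorem (Theorem~\ref{thm:main1}, i.e.\ Theorem~\ref{thm:generaldoubling}) with $\rho=\Lambda/\eps$ to each $I(p_i,q_i)\subset\tilde I$, then sum over the pairwise disjoint family. One small repair is needed: to divide out at the end you also need $\mfm(\tilde I)<\infty$, not only positivity, and this holds because $\tilde I\subset J(p,q)$, which is compact by global hyperbolicity, and $\mfm$ is Radon (also, full support is part of the definition of a measured Lorentzian length space, not a consequence of $\TMCP^e$).
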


We then apply Theorems~\ref{thm:main1} and \ref{thm:main2} to the class of Lorentzian generalized cones as described in \cite{alexander-graf-kunzinger-saemann2023}. These spaces cover a wide family of spacetimes and have the particular property that there is a distinguished time direction. Namely, these spaces are of the form $\prescript{-}{}J\times_f Y$, where $J$ is an interval, $f\colon J\to [0,\infty)$ is a continuous function, and $Y$ is a metric space, and for any $x\in Y$ the curve $\hat{\gamma}_x\colon t\mapsto (t,x)$ is a future-directed distance realiser (see \Cref{ss:generalized cones}). We can thus consider the following collection of chronological diamonds: 
\[
\hat{\calI}(X) = \{I((t,x),(t',x)): t\leq t',\ x\in Y\}.
\]
We then obtain the following uniform boundedness theorem for $\eps$-nets. Here, an $\eps$-net is a collection of causal diamonds with time separation bounded by $\eps$ that cover a given set.

\begin{theorem}\label{thm:main3}
For any $M>m>0$, $k,K\in\RR$, $N>1$, $0<\eps<\Lambda<D_k/2$, there is a constant $\hat{C}>0$
such that the following holds: Let $X=\prescript{-}{}J \times_f Y$ be a globally hyperbolic, timelike non-branching, regular, measured Lorentzian length space, $m\leq f \leq M$, and $X$ (and its causally reversed structure) satisfies the $\TMCP^e(K,N)$- and global $\TCBA(k)$-conditions. Let $\tilde I\in \hat{\calI}(X)$ be such that $\diam^\tau(I)\leq \Lambda$. Then there exists an $\eps$-net of $\tilde I$ with cardinality at most $\hat{C}$.
\end{theorem}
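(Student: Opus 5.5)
The plan is to obtain the $\eps$-net by taking a \emph{maximal} separated family built from the distinguished vertical diamonds of $\hat{\calI}(X)$, and then to use maximality and the product structure $\prescript{-}{}J\times_f Y$ to show that slightly enlarged diamonds cover $\tilde I$. The cardinality bound will then follow from Theorem~\ref{thm:main2}.

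\textbf{Choosing the family.} Fix $\tilde I = I((t_0,x_0),(t_1,x_0))$ with $t_1-t_0=\diam^\tau(\tilde I)\le\Lambda$. Recall that the vertical curves $\hat\gamma_x$ are distance realisers, so $\tau((t,x),(t',x))=t'-t$.

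Choose a small parameter $\delta=\delta(\eps,m,M)\in(0,\eps)$. Consider vertical diamonds $I((s,y),(s+\delta,y))\subset\tilde I$. By Zorn's lemma, or simply by finiteness once the bound is known, pick a maximal family of pairwise disjoint such diamonds. This family is a $\delta$-separated set in $\tilde I$ in the sense of \Cref{def:eps-separated set}. Since $\delta<\Lambda$, Theorem~\ref{thm:main2}, applied with $\delta$ in place of $\eps$, bounds its cardinality by a constant $\widetilde C(k,K,N,\delta,\Lambda)$. This constant depends only on the data once $\delta$ is fixed in terms of $\eps,m,M$.

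\textbf{Covering step (main obstacle).} We must show that the enlarged causal diamonds $J((s-\delta',y),(s+\delta+\delta',y))$ cover $\tilde I$, with $\tau$-diameter $\delta+2\delta'\le\eps$.

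Take $z=(t,x)\in\tilde I$. If $z$ is not covered, maximality should yield a vertical $\delta$-diamond around $z$ inside $\tilde I$ that is disjoint from all chosen ones. There are two obstructions.

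\emph{Points near the boundary of $\tilde I$.} Here no $\delta$-diamond around $z$ fits inside $\tilde I$. I would handle this by first applying the argument to a slightly shrunken diamond. Alternatively, I would use that near the tips the whole region lies in a single small diamond, and that near the lateral boundary one can slide the vertical segment inside.

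\emph{Disjointness versus nearness.} If $I((t-\delta/2,x),(t+\delta/2,x))$ meets some chosen $I((s,y),(s+\delta,y))$, I want to conclude that $z$ lies in the enlarged diamond around $(s,y)$. This is where the bounds $m\le f\le M$ enter. A common point $w=(r,u)$ gives causal relations $(t-\delta/2,x)\ll w\ll(s+\delta,y)$ and $(s,y)\ll w\ll(t+\delta/2,x)$. In the warped product, causal curves satisfy $|\dot u|\,f\le|\dot r|$, so $M^{-1}$-type bounds give $d_Y(x,u),d_Y(u,y)\lesssim\delta/m$ and $|t-s|\le 2\delta$. Hence $d_Y(x,y)\lesssim 2\delta/m$ and $|t-s|\le 2\delta$. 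Conversely, points with $|t-s|\le c$ and $d_Y(x,y)\le c/M$ lie in a vertical diamond of height roughly $3c$ around $y$, since a lateral displacement $\ell$ can be absorbed by a time increment $M\ell$. Taking $\delta'\approx C(m,M)\delta$ and choosing $\delta$ so small that $\delta+2\delta'\le\eps$ puts $z$ in the enlarged diamond.

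I expect making these comparison estimates rigorous to be the hardest step. One needs the precise causal characterisation of $\prescript{-}{}J\times_f Y$ from \Cref{ss:generalized cones}, in particular that $(t,x)\le(t',y)$ holds whenever $t'-t\ge M d_Y(x,y)$, and an upper bound on $d_Y$ along causal curves via $f\ge m$. If the net is required to consist of diamonds with time separation at most $\eps$, it suffices to verify $\tau$ of the enlarged vertical diamond, namely $\delta+2\delta'$.

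The resulting constant is $\hat C=\widetilde C(k,K,N,\delta(\eps,m,M),\Lambda)$.
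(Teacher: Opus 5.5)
Your plan is the paper's own. The proof of the precise version, \Cref{thm:bound on eps net}, takes a maximal $\rho^{-1}\eps$-separated family from $\hat{\calI}(X)$ inside $\tilde I$ with $\rho=3(1+M/m)$, bounds its size by \Cref{thm:main2}, and covers $\tilde I$ by the canonical enlargements $F_\rho(I_i)$ via \Cref{lem:maximal separated set is net} and \Cref{prop:separated set yields net}. In your notation this is $\delta=\rho^{-1}\eps$ and $\delta+2\delta'=\eps$. Taking a family of maximal cardinality instead of using Zorn is fine.

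The boundary obstruction you raise is real. The paper's \Cref{lem:maximal separated set is net} simply assumes that some vertical diamond of the right height through $z$ lies in $A$. Sliding is the right fix; shrinking alone only moves the problem to the collar. Write $\tilde I=I((t_0,y_0),(t_1,y_0))$ and $z=(t,x)$. If $t_1-t_0<\eps$, then $J((t_0,y_0),(t_1,y_0))$ is already an $\eps$-net. Otherwise, put the base of the test diamond $D_z$ on a (near-)geodesic from $y_0$ to $x$. In the conformal time $\int dt/f$, the causal structure over that curve is that of $1+1$ Minkowski space, so finding the vertical subdiamond is an elementary check. But $D_z$ is then no longer centred at $z$, so your centred estimate must be replaced by the one in \Cref{prop:separated set yields net}. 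Writing $c_i$ and $y_i$ for the time of the centre and the spatial component of $I_i$, \Cref{lem:causal-diamond-bounds} gives $|t-c_i|<\tfrac32\delta$ and $d_Y(x,y_i)<\tfrac{3\delta}{2m}$. \Cref{lem:timelike-cylinder-bounds} then yields exactly the factor $\rho$.

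The step that genuinely fails as written is that you never check that the enlarged diamonds $J((s-\delta',y),(s+\delta+\delta',y))$ exist, i.e.\ that $s-\delta'$ and $s+\delta+\delta'$ lie in the interval $J$. Nothing in the statement keeps the tips of $\tilde I$ away from $\partial J$. The paper handles this only by adding, in \Cref{thm:bound on eps net}, the hypothesis that the tips are at least $\tfrac{3\eps}{2}(1+M/m)$ from $\partial J$. Either adopt that hypothesis or repair the argument as follows.
\begin{itemize}
\item Every $z=(t,x)\in\tilde I$ with $t<t_0+\eta$ satisfies $d_Y(x,y_0)<\eta/m$. Hence it lies in $J((t_0,y_0),(t_0+\eps,y_0))$ as soon as $\eta\le \eps m/(m+M)$. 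The future tip is handled symmetrically.
\item For the remaining points, choose $\delta\le 2\eta/(\rho+3)$, so that $\rho\delta<\eps$. Then every $I_i$ used to cover such a point has $F_\rho(I_i)$ with time range inside $[t_0,t_1]\subset J$.
\item Discard the enlargements that are undefined.
\end{itemize}
This adds two diamonds to the net and keeps $\hat C$ dependent only on $m,M,k,K,N,\eps,\Lambda$.
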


As one would expect from the metric case, such uniform bounds on $\eps$-nets yield compactness results with respect to the Lorentzian Gromov--Hausdorff convergence in the sense of Mondino--S\"amann \cite{mondino-saemann2025}.


\begin{theorem}\label{thm:main4}
For any $M>m>0$, $k,K\in\RR$, $N>1$, the class of covered Lorentzian pre-length spaces $(X,\mathcal{U})$ described below is precompact with respect to the Lorentzian Gromov--Hausdorff convergence as defined in \cite[Definition 3.12]{mondino-saemann2025}: 
\begin{itemize}
\item $X=I((\bar s, \bar y),(\bar t,\bar y))$
is a subset of the globally hyperbolic, timelike non-branching, regular Lorentzian length space $\prescript{-}{}J\times_f Y$, which is a generalized cone over a metric space $Y$ with $m\leq f \leq M$, $\diam^\tau(X)\leq\Lambda<D_k/2$, the global $\TCBA(k)$-condition holds, and $X$ can be equipped with a non-negative, full-support Radon measure $\mfm$ such that it (and its causally reversed structure) satisfies the $\TMCP^e(K,N)$-condition.
\item $\mathcal{U}=\{U_n\}_{n\in\mathbb N}=\{I((\bar s+1/n,\bar y),(\bar t-1/n,\bar y))\}_{n\in \mathbb{N}}$ for some fixed $\bar y\in Y$.
\end{itemize}
\end{theorem}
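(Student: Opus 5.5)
The plan is to derive Theorem~\ref{thm:main4} from the Mondino--S\"amann compactness criterion \cite{mondino-saemann2025}. Their criterion asks, roughly, for two things. First, for each fixed $n$ and each $\eps>0$, every member of the class must admit an $\eps$-net of $U_n$ by causal diamonds whose cardinality is bounded uniformly over the class. Second, the time separation $\tau$ must be uniformly bounded. The second point is immediate from $\diam^\tau(X)\le\Lambda$, and the cover $\mathcal U$ is exhausting and nested by construction. So the work reduces to producing uniform net bounds, and for that I will invoke Theorem~\ref{thm:main3}.

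\emph{Step 1: the diamonds lie in $\hat{\calI}$.} Each $U_n=I((\bar s+1/n,\bar y),(\bar t-1/n,\bar y))$ lies in $\hat{\calI}(\prescript{-}{}J\times_f Y)$, since both vertices share the base point $\bar y$. Because $U_n\subset X$, we have $\diam^\tau(U_n)\le\diam^\tau(X)\le\Lambda$.

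\emph{Step 2: restricting the curvature conditions.} Restricting $\mfm$, the $\TMCP^e(K,N)$-condition and the global $\TCBA(k)$-condition to $X$ should be harmless. Chronological diamonds are causally convex in a globally hyperbolic space, so geodesics and timelike triangles between points of $X$ stay in $X$.

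\emph{Step 3: applying Theorem~\ref{thm:main3}.} Fix $\eps$ with $0<\eps<\Lambda$. If $\eps\ge\Lambda$, a single diamond already suffices, since $\tau$ is bounded by $\Lambda$. For $\eps<\Lambda$, Theorem~\ref{thm:main3} gives an $\eps$-net of $U_n$ of cardinality at most $\hat C(m,M,k,K,N,\eps,\Lambda)$. This bound is independent of $n$ and of the member of the class, which is even stronger than the criterion requires.

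\emph{Step 4: matching definitions.} Finally I will check that the notion of $\eps$-net in Theorem~\ref{thm:main3} agrees with the one required in \cite[Definition 3.12 and the compactness theorem]{mondino-saemann2025}. There the diamonds' vertices may have to lie in $U_{n+1}$ or in the ambient space, and the net points are given with $\tau$-size at most $\eps$. If the criterion instead requires nets made of points, with $\tau$-neighbourhoods, I will take the vertices of the covering diamonds as the net points. Since $\tau(p,q)\le\eps$, every point in $J(p,q)$ has time separation at most $\eps$ from $p$ and to $q$.

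The main obstacle is expected to be this bookkeeping. The delicate point is that the pre-length space is $X$ with the restricted $\tau$, so the net diamonds produced by Theorem~\ref{thm:main3} must have vertices inside $X$ itself, or inside $U_{n+1}$, to be admissible. I will handle this by applying Theorem~\ref{thm:main3} to the slightly larger diamond $U_{n+1}$, which again lies in $\hat{\calI}$ with diameter at most $\Lambda$, and then intersecting the resulting net with $U_n$.
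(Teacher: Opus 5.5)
\paragraph{Gap: the net diamonds must lie in $X$, and your fix does not achieve this.}
You correctly identify the real difficulty in Step~4. The diamonds of a net must be causal diamonds of the pre-length space $X=I((\bar s,\bar y),(\bar t,\bar y))$ itself, so their vertices must lie in $X$. Your proposed fix, however, does not resolve this.

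The nets of Theorem~\ref{thm:main3} consist of canonical enlargements $J((c-\eps/2,y),(c+\eps/2,y))$ centred at points $(c,y)$ of the diamond being covered, and maximality forces some of them to be centred near the past tip.
\begin{itemize}
\item Fix $\eps$ and take $1/n$ small compared to $\eps$. Then the net diamonds centred near the past tip of $U_{n+1}$ reach below the time level $\bar s$, so they are not contained in $X$.
\item Replacing $U_n$ by $U_{n+1}$ only pushes the centres closer to $\partial X$.
\item ``Intersecting with $U_n$'' does not produce causal diamonds.
\item The boundary hypothesis in the precise form, Theorem~\ref{thm:bound on eps net}, concerns only the ambient interval $J$. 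It says nothing about staying inside $X$.
\end{itemize}

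\paragraph{The missing idea: an $n$-dependent scale.}
The paper sets $\eps_n:=\frac{m}{nM}$. Using Lemma~\ref{lem:chronological relation generalized cones} and $m\le f\le M$, it proves that $(t,y)\in U_n$ implies $(t\pm\eps_n,y)\in X$. For the future direction one shows
\[
\int_{t+\eps_n}^{\bar t}\frac1f\ \ge\ \int_t^{\bar t-1/n}\frac1f\ >\ d_Y(y,\bar y),
\]
distinguishing two cases:
\begin{itemize}
\item if $t+\eps_n<\bar t-\frac1n$, the difference of the integrals is at least $\frac{1}{nM}-\frac{\eps_n}{m}=0$;
\item if $t+\eps_n\ge\bar t-\frac1n$, then $\bar t-t-\eps_n\ge\frac{M}{m}(\bar t-\frac1n-t)$.
\end{itemize}
By causal convexity, every diamond in $\hat{\calJ}(X)$ of diameter at most $\eps_n$ centred in $U_n$ then lies in $X$. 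Applying Theorem~\ref{thm:main3} to $U_n$ at scale $\min\{\eps,\eps_n\}$ gives admissible nets, and these are in particular $\eps$-nets.

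The resulting bound is $C(n,\eps)$, so your claim that it is independent of $n$ is unjustified. This is harmless, since the criterion only asks for a bound depending on $n$ and $\eps$.

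\paragraph{Secondary omission: nested nets.}
You do not address condition (iii) of \cite[Theorem 6.2]{mondino-saemann2025}. It requires the nets of $U_n$ for different $n$ to be nested, not just the cover $\mathcal U$. This is not automatic for nets produced independently for each $n$, but it is easy to arrange by replacing $S^n_\eps$ with $\bigcup_{i\le n}S^i_\eps$:
\begin{itemize}
\item each of its diamonds meets $U_i\subset U_n$;
\item the union still covers $U_n$;
\item its cardinality is at most $\sum_{i\le n}C(i,\eps)$.
\end{itemize}
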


Finally, we obtain a few variants of \Cref{thm:main4}. Namely, in \Cref{thm:main5}  we consider Lorentzian length spaces of the form $X=\prescript{-}{}\RR \times_f Y$ satisfying analogous conditions to those of \Cref{thm:main4}. In \Cref{thm:main6} we exchange the assumption of the global $\TCBA(k)$-condition with the condition that $f\equiv 1$, i.e.\ $X$ is a Minkowski product of the form $\prescript{-}{}J\times Y$. In \Cref{thm:main7} we consider spaces of the form $X = \prescript{-}{}J\times_f Y$ where $Y$ satisfies an $\MCP$-condition, and in \Cref{cor:main8} we obtain such condition a posteriori by assuming that $X = \prescript{-}{}J\times_f Y$ satisfies a $\TCD^e_p$-condition and is equipped with a measure of the form $\mfm = f(t) dt\otimes d\mfm_Y$ where $f$ satisfies suitable conditions.

Note that other geometric compactness theorems for Lorentzian spaces already exist in the literature. As mentioned above, Mondino--S\"amann obtained a general compactness criterion for the Lorentzian Gromov--Hausdorff convergence, but also a compactness theorem for globally hyperbolic spacetimes with timelike sectional curvature bounded below as well as bounds on a compact Cauchy hypersurface for the Ricci curvature tensor, the second fundamental form and the diameter \cite[Theorem~8.9]{mondino-saemann2025}. In contrast, our Theorems~\ref{thm:main4}, \ref{thm:main5} and \ref{thm:main6} assume only curvature bounds on the ambient space and exploit the doubling conditions for chronological diamonds that we obtain in previous sections.

On the other hand, in their recent manuscript, Ketterer obtained a compactness result for smooth generalized cones with respect to a variant of the Lorentzian Gromov--Hausdorff convergence where the background metric plays a significant role 
\cite[Theorem~1.4]{ketterer2026}. This should be compared with our \Cref{thm:main7} and \Cref{cor:main8}, where we consider more general spaces, and the notion of convergence is the one introduced by Mondino--S\"amann.

Finally, compactness criteria have also been established for other notions of convergence of synthetic Lorentzian spaces, including bounded Lorentzian metric spaces \cite{minguzzi-suhr, BMS:25} and timed-metric spaces \cite{che-perales-sormani}, which builds on the null distance of Sormani--Vega \cite{SV:16} and the notion of $\tau-H$ convergence introduced by Sakovich--Sormani \cite{SS:24}. Understanding the relationship between these compactness results and the geometric compactness theorems developed here would be an interesting direction for future research.



\begin{ack}
This research was funded in whole or in part by the Austrian Science Fund (FWF) [Grant DOI’s: 10.55776/EFP6, 10.55776/STA32]. For open access purposes, the authors have applied a CC BY public copyright license to any author-accepted manuscript version arising from this submission.
\end{ack}

\section{Preliminaries}\label{s:preliminaries}

\subsection{Lorentzian pre-length spaces}\label{ss:lpls}
In this subsection we review notation, definitions and results about Lorentzian pre-length spaces that are relevant for this work. Lorentzian pre-length spaces are  analogues of metric spaces in the Lorentzian setting, while Lorentzian length spaces are analogues of metric length spaces. Introduced in \cite{kunzinger-saemann2018} (after earlier works of Busemann \cite{Bus:67} and Kronheimer--Penrose \cite{KP:67}), their precise definitions has evolved over recent years and by now there are different variants of the basic axiomatization of these spaces, cf.\ \cite{McC:24, BMcC:23, BBCGMORS:24, mondino-saemann2025} (and approaches \cite{SV:16, SS:24} based on the null distance and the bounded Lorentzian metric spaces \cite{minguzzi-suhr, BMS:25}). We opted for sticking to the original setting of \cite{kunzinger-saemann2018} as, for example, the topological and metric structure of generalized cones is always given by the cone or product structure.

Here we gather some basic definitions about Lorentzian pre-length spaces. The most fundamental structure here is that of causal spaces.
\begin{definition}\label{def:causal-space}
A \emph{causal space} is a tuple $(X,\ll,\leq)$ where $X$ is a set, and $\ll, \leq$ are binary relations, where both are transitive, $\leq$ is reflexive, and $\ll\subset \leq$.  Chronological and causal futures and pasts, as well as chronological and causal diamonds, are defined as usual:
\begin{align*}
I^{+}(p) &:= \{ q \in X : p \ll q \}, & I^{-}(q) &:= \{ p \in X : p \ll q \}, \\
J^{+}(p) &:= \{ q \in X : p \leq q \}, & J^{-}(q) &:= \{ p \in X : p \leq q \}, \\
I(p,q) &:= I^{+}(p) \cap I^{-}(q), & J(p,q) &:= J^{+}(p) \cap J^{-}(q).
\end{align*}
\end{definition}

Lorentzian pre-length spaces are obtained when we endow a causal space with a metric topology and a time separation function as follows.
\begin{definition}\label{def:lorentzian-prelength}
A \emph{Lorentzian pre-length space} $(X, d, \ll, \leq, \tau)$ is a causal space $(X, \ll, \leq)$ together with a metric $\msd$ on $X$ and a map $\tau \colon X \times X \to [0,\infty]$ satisfying:
\begin{enumerate}
    \item $\tau$ is lower semi-continuous with respect to $d$;
    \item $\tau(p,r) \geq \tau(p,q) + \tau(q,r)$ for $p \leq q \leq r$;
    \item $\tau(p,q) > 0$ if and only if $p \ll q$.
\end{enumerate}
\end{definition}

Throughout this paper we often denote a Lorentzian pre-length space $\LLS$ by $X$. We say that $X$ is:
\begin{itemize}
    \item \emph{intrinsic} if the time separation between causally related points is the supremum of lengths of causal curves connecting them;   
    \item a \emph{Lorentzian length space} if it is causally path connected \cite[Definition~3.1]{kunzinger-saemann2018}, locally causally closed \cite[Definition~3.4]{kunzinger-saemann2018}, localizable \cite[Definition~3.16]{kunzinger-saemann2018} and intrinsic;
    \item \emph{geodesic} if for all $p\le q$ there exists a \emph{distance realiser} (or \emph{geodesic}) connecting them, i.e.\ a future-directed causal curve $\gamma$ from $p$ to $q$ such that 
    \[
    L_{\tau}(\gamma) = \tau(p,q);
    \] 
    where $L_\tau$ is the $\tau$-length functional.
\end{itemize}
A crucial implication relating these concepts is that any globally hyperbolic Lorentzian length space is geodesic \cite[Theorem~3.30]{kunzinger-saemann2018} and that all timelike geodesics can be parametrized with respect to $\tau$-arclength \cite[Corollary~3.25]{kunzinger-saemann2018}, which we will assume from now on.

We say that $X$ is \textit{timelike non-branching} if whenever $\gamma_1,\gamma_2\colon [0,\eps)\to X$ are distance realisers such that $\gamma_1|_{[0,t]}=\gamma_2|_{[0,t]}$ for some $t\in (0,\eps)$, it follows that $\gamma_1=\gamma_2$.



A \emph{measured Lorentzian (pre-)length space} is a Lorentzian (pre-)length space on a Polish space $X$ endowed with a non-negative Radon measure $\mfm$ (i.e.\ a Borel-regular measure which is
finite on compact sets) such that $\supp(\mfm) = X$.

Finally, for any $S\subset X$, we define the \textit{timelike diameter} 
\[\diam^\tau(S) = \sup\{\tau(x,y):x,y\in S\}.\] In particular, if $x\leq y$ and $J(x,y)$ denotes the corresponding causal diamond, then the reverse triangle inequality for $\tau$ implies $\diam^\tau(J(x,y)) = \tau(x,y)$.

\subsection{Timelike sectional curvature bounds}

In this section we review the notion of timelike sectional curvature bounds for Lorentzian pre-length spaces introduced by Kunzinger--S\"amann \cite{kunzinger-saemann2018}, which in turn  is based on previous work by Alexander--Bishop \cite{alexander-bishop2008} and Harris \cite{harris1982} in the smooth setting. This notion is the Lorentzian analogue of sectional curvature bounds in the sense of Alexandrov (see, for example, \cite{BBI,BGP,BH}).

\begin{definition}\label{def:model-spaces}
For any integers $0\leq m\leq n$, denote by $\mathbb{R}^{n}_{m}$ the vector space $\mathbb{R}^{n}$ together with the inner product
\[
b(v,w) \;=\; -\sum_{i=1}^{m}v_i w_i + \sum_{i=m+1}^{n} v_i w_i, 
\quad \text{for } v=(v_1,\dots,v_{n}), \, w=(w_1,\dots,w_{n}).
\]
Let $k \in \mathbb{R}$. The \emph{Lorentzian $k$-planes}, or the \emph{comparison spaces} of constant curvature $k$, are defined as follows:
\begin{itemize}
    \item For $k>0$, let 
    $\mathbb{L}^2(k)$ be the universal cover of the two-dimensional de Sitter spacetime i.e. $\{ v \in \mathbb{R}^3_1 \colon b(v,v) = 1/k^2 \}$.
    
    \item For $k<0$, let 
    $\mathbb{L}^2(k)$ be the universal cover of the two-dimensional anti-de Sitter spacetime i.e. $\{ v \in \mathbb{R}^3_2 \colon b(v,v) = -1/k^2 \}$.
    
    \item For $k=0$, let $\mathbb{L}^2(0) = \mathbb{R}^2_1$, the two-dimensional Minkowski spacetime.
\end{itemize}
The \textit{timelike diameter} of $\mathbb{L}^2(k)$ is
\[
D_k = 
\begin{cases}
\pi/\sqrt{-k}, & k < 0, \\
\infty, & k \geq 0.
\end{cases}
\]
\end{definition}

\begin{definition}\label{def:geodesic-triangles}
Let $X$ be a Lorentzian pre-length space.  
\begin{enumerate}
    \item A \emph{timelike (geodesic) triangle} $\triangle p_1p_2p_3$ consists of three points 
    \[
    p_1 \ll p_2 \ll p_3 \in X, \qquad \tau(p_i,p_j) < \infty \;\; \text{for } i < j,
    \]
    together with three future-directed causal distance realising curves $\alpha_{ij}$ connecting $p_i$ to $p_j$ (for $i<j$).  
    Analogously, one defines a \emph{causal triangle}.
    
    \item An \emph{admissible causal (geodesic) triangle} $\triangle p_1p_2p_3$ consists of three points
    \[
    p_1 \ll p_2 \leq p_3 \quad \text{or} \quad p_1 \leq p_2 \ll p_3 \in X, \qquad \tau(p_i,p_j) < \infty \;\; \text{for } i < j,
    \]
    together with three (possibly constant) future-directed causal distance realising curves $\alpha_{ij}$ connecting $p_i$ to $p_j$ (for $i<j$).  

    \item A side between two vertices $p_i \ll p_j$ is called a \emph{timelike side} (even though the realising curve need not itself be timelike).  


\end{enumerate}
\end{definition}

\begin{definition}
Let $X$ be a Lorentzian pre-length space, $\mathbb{L}^2(k)$ the $k$-plane with time separation function $\bar{\tau}$, and let $\triangle p_1 p_2 p_3$ be an admissible causal triangle in $X$.  

A \emph{comparison triangle} for $\triangle p_1p_2p_3$ in the Lorentzian $k$-plane is an admissible triangle $\triangle \bar{p}_1 \bar{p}_2 \bar{p}_3$ in $\mathbb{L}^2(k)$ such that
\[
\tau(p_i, p_j) = {\bar\tau}(\bar{p}_i,\bar{p}_j) \quad \text{for } i < j.
\]

We say that the vertex $p_i$ in $X$ \emph{corresponds} to the vertex $\bar{p}_i$ in the $k$-plane. Moreover, the side $\alpha_{ij}$ connecting $p_i$ to $p_j$ (for $i<j$) in $X$ \emph{corresponds} to the side $\overline{\alpha}_{ij}$ connecting $\bar{p}_i$ to $\bar{p}_j$ in the $k$-plane. For a point $q$ lying on a timelike side $\alpha_{ij}$ of an admissible causal triangle in $X$, we define the corresponding point $\bar{q}$ on the comparison side $\overline{\alpha}_{ij}$ in the $k$-plane by requiring that it has the same time separation from the endpoints of the side. That is, we require
\[
\tau(p_i, q) = {\bar\tau}(\bar{p}_i, \bar{q}),
\]
which then automatically implies
\[
\tau(q, p_j) = {\bar\tau}(\bar{q},\bar{p}_j),
\]
since $\alpha_{ij}$ and $\overline{\alpha}_{ij}$ are distance realisers.
\end{definition}


\begin{definition}\label{def:timelike-curvature-bounds}
    Let $X$ be a globally hyperbolic Lorentzian length space and $k\in\mathbb R$. Then we say that $X$ has \emph{global timelike curvature bounded from above by $k$} (or the global $\TCBA(k)$-condition for short) if for all admissible causal triangles $\triangle xyz$ satisfying timelike size bounds for $k$ (i.e.\ all side lengths are less than $D_k$), the following holds:

    Let $p,q$ be points on the timelike sides of $\triangle xyz$ and let $\triangle\bar x\bar y\bar z$ be a comparison triangle in the comparison space $\mathbb L^2(k)$. If $\bar p,\bar q$ correspond to $p,q$ then
    \begin{equation}\label{eq:curvature bound definition}
    \tau(p,q)\geq\bar\tau(\bar p,\bar q).
    \end{equation}
\end{definition}

\begin{remark}\label{rem:curvaturebound}
\begin{enumerate}[label=(\roman*)]
    \item Note that in the usual definition of sectional curvature bounds in Lorentzian pre-length spaces we additionally assume that, locally, $\tau$ is continuous and any two timelike related points can be joined by a distance realiser. These assumptions are not necessary for our global curvature bound since global hyperbolicity already implies both conditions (see \cite[Theorem 3.28]{kunzinger-saemann2018} and \cite[Theorem 3.30]{kunzinger-saemann2018}).
    \item An immediate consequence of this definition is that whenever $\bar p\ll \bar q$ holds for two points on the comparison triangle then also $p\ll q$ in the original space. By \cite[Theorem~4.2]{beran-kunzinger-rott2024} the same implication still holds for the causal relation i.e. $\bar p\leq \bar q$ in the comparison space implies $p\leq q$ in the original one.\label{rem:curvaturebound2}
    \item There are several ways to define sectional curvature bounds for Lorentzian pre-length spaces. One commonly used version only requires \eqref{eq:curvature bound definition} to hold for timelike triangles but this is equivalent to our definition by \cite[Theorem~4.2]{beran-kunzinger-rott2024}. See also \cite{BHS:26} for recent improvements.
\end{enumerate}
\end{remark}

    
    

\subsection{Timelike measure contraction property and localization technique}
The following definition, introduced by Cavalletti and Mondino in \cite{cavalletti-mondino2024}, is the natural Lorentzian analogue of the classic measure contraction property for metric measure spaces \cite{ohta2007,sturm2}. We will not recall all the notions involved in this definition (e.g.\ the entropy functional, $\ell^p$-geodesics), therefore we refer the reader to \cite{cavalletti-mondino2020} for details.
\begin{definition}\label{def:tmcp}
A measured Lorentzian pre-length space $\LMS$ is said to satisfy the \textit{$\TMCP^e(K,N)$-condition}, for some $K \in \mathbb{R}$ and $N > 0$, if the following holds:  

For any $\mu_0 \in \mathcal{P}_c(X) \cap \mathrm{Dom}(\mathrm{Ent}(\cdot \mid \mfm))$, any $x_1 \in X$ such that 
\[
x \ll x_1 \quad \text{for $\mu_0$-a.e.\ $x \in X$,}
\]
and some (thus any; see \cite[Remark 3.8]{cavalletti-mondino2024}) $p\in (0,1)$, there exists an $\ell^p$-geodesic $(\mu_t)_{t \in [0,1]}$ from $\mu_0$ to $\mu_1 = \delta_{x_1}$ such that
\[
U_N(\mu_t \mid \mfm) \;\geq\; \sigma^{(1-t)}_{K/N}\!\big( \, \|\tau(., x_1)\|_{L^2(\mu_0)} \, \big) \, U_N(\mu_0 \mid \mfm),
\qquad \forall \, t \in [0,1).
\]
\end{definition}

The following result summarizes parts of \cite[Section 4]{cavalletti-mondino2024} that we will use in our proof of \Cref{thm:main1}.

\begin{theorem}\label{thm:needle-decomposition}
Let $\LMS$ be a globally hyperbolic, timelike non-branching, regular measured Lorentzian length space satisfying the $\TMCP^e(K, N)$-condition for some 
$K \in \mathbb{R}$, $N > 1$, and assume that the causally-reversed structure satisfies the same conditions. 

Let $x_o\in X$. 
Then 
the following disintegration formula holds:
\begin{equation}
    \mfm\llcorner_{I^+({x_o})} = 
    \int_Q \mfm_{\alpha} \, d\mfq(\alpha) 
\end{equation}
where:
\begin{itemize}
    \item $\mfq$ is a Borel probability measure over a Borel set $Q \subset I^+(x_o)$;
    \item $\{X_\alpha\}_{\alpha\in Q}$ is a partition of $I^+(x_o)$ such that $X_\alpha\cap Q = \{\alpha\}$ for all $\alpha\in Q$, and the closure $\overline{X}_\alpha$ of $X_\alpha$ is the image of a timelike geodesic $\gamma_\alpha\colon I_\alpha\to X$ starting at $x_o$, parametrized by arclength on a possibly unbounded closed interval $I_\alpha = [0,d_\alpha]$;
    \item for $\mfq$-a.e. $\alpha\in Q$, $\mfm_\alpha$ is a Radon measure on $X$ concentrated on $X_\alpha$ and absolutely continuous with respect to the Lebesgue measure restricted to $X_\alpha$;
    \item the map $\alpha \mapsto \mfm_\alpha(A)$ 
    is $\mfq$-measurable for every $\mfm$-measurable set $A$.
    \item 
    $(X_\alpha, |\cdot|, \mfm_\alpha)$ satisfies the $\mathrm{MCP}(K, N)$-condition, for $\mfq$-a.e. $\alpha\in Q$.
\end{itemize} 
\end{theorem}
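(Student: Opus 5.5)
The plan is to obtain the statement as the special case of the Cavalletti--Mondino localization for the $1$-steep function $u:=\tau(x_o,\cdot)$ on $I^+(x_o)$. By global hyperbolicity $\tau$ is continuous and finite and $X$ is geodesic. Since $x_o\ll x$ for $x\in I^+(x_o)$, the reverse triangle inequality gives $u(y)-u(x)\geq\tau(x,y)$ whenever $x\leq y$. I would therefore first define the \emph{transport relation}
\[
\Gamma=\{(x,y)\in I^+(x_o)\times I^+(x_o): x\leq y,\ u(y)-u(x)=\tau(x,y)\}.
\]
I would then check that $\Gamma$ is closed and $\sigma$-compact; this uses continuity of $\tau$ and $\sigma$-compactness of $X$. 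Next I would define the transport set $\mathcal T$ as the set of points lying on some pair in $\Gamma$. The key geometric observation is that $(x,y)\in\Gamma$ means exactly that $x$ lies on a maximizing geodesic from $x_o$ to $y$. Hence the $\Gamma$-rays through points of $I^+(x_o)$ are precisely the timelike geodesics $\gamma$ emanating from $x_o$, parametrized by $\tau$-arclength, so that $u(\gamma(t))=t$. In particular $I^+(x_o)\subset\mathcal T$, since every point is joined to $x_o$ by a realiser.

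The second step is to remove branching. Define the forward and backward branching sets $A_\pm$: the points through which two distinct rays pass, one extending the other in the future or past direction. On $\mathcal T\setminus(A_+\cup A_-)$, the relation ``lying on a common ray'' is an equivalence relation. Its classes $X_\alpha$ are (relatively open pieces of) geodesics $\gamma_\alpha$ starting at $x_o$. Backward branching is impossible here in an essential way, because all rays end at $x_o$ and timelike non-branching is assumed. The real work is to show $\mfm(A_+)=0$, i.e.\ that a.e.\ point lies on a unique maximal geodesic from $x_o$. For this I would argue by contradiction. Suppose $\mfm(A_+)>0$. Choose a compact $C\subset A_+$ of positive measure and, via a measurable selection, two distinct branches through each of its points. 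Then apply the $\TMCP^e(K,N)$ of the \emph{causally reversed} structure, contracting $\mfm\llcorner_C/\mfm(C)$ towards suitable future points. This produces $\ell^p$-geodesics with absolutely continuous intermediate measures, which are forced to split along both branches. Combined with timelike non-branching, this contradicts the uniqueness of $\ell^p$-optimal maps (regularity is used here). I expect this step to be the main obstacle, as it is in the Riemannian and metric localization theory.

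Once branching is negligible, I would choose a Borel quotient map $\mathfrak Q\colon I^+(x_o)\to Q$ by picking, say, the point of $X_\alpha$ at a fixed level of $u$, or the first point in a countable family of level sets. This gives a Borel section $Q$ with $X_\alpha\cap Q=\{\alpha\}$. I would set $\mfq$ to be the normalised push-forward of a finite measure equivalent to $\mfm\llcorner_{I^+(x_o)}$. The disintegration theorem on the Polish space $X$ then yields a strongly consistent family $\{\mfm_\alpha\}$, concentrated on $X_\alpha$, with the measurability of $\alpha\mapsto\mfm_\alpha(A)$ built in; $\overline{X}_\alpha=\gamma_\alpha(I_\alpha)$ by construction.

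Finally, I would localize the curvature condition. For a.e.\ $\alpha$, consider $\mu_0$ concentrated on a family of needles, built from a measurable collection of Borel subsets $A_\alpha\subset X_\alpha$, together with the target $x_1=x_o$ (in the reversed structure), or points further up the needles (in the original structure). The $\ell^p$-geodesic supplied by $\TMCP^e$ must move mass along the needles, by the uniqueness and non-branching just established. Disintegrating the resulting inequality and using the arbitrariness of the families $A_\alpha$, one obtains the one-dimensional inequalities
\[
\mfm_\alpha(\text{contraction of }A_\alpha\text{ towards an endpoint})\ \geq\ \sigma\text{-distortion}\cdot\mfm_\alpha(A_\alpha)
\]
in both time directions. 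This is exactly $\MCP(K,N)$ for $(X_\alpha,|\cdot|,\mfm_\alpha)$. The standard one-dimensional theory then gives that $\mfm_\alpha$ has a locally semiconcave, hence continuous and positive, density with respect to $\mathcal L^1$ on the interior of $I_\alpha$; this yields the claimed absolute continuity.
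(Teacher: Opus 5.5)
Your overall route, specialising the Cavalletti--Mondino localization to $u=\tau(x_o,\cdot)$ on $I^+(x_o)$, is the paper's route (the paper gives no proof and simply cites \cite{cavalletti-mondino2024}). However, your treatment of branching, which is the heart of that argument, is wrong as written.

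\emph{Past branching is not excluded.} Branching towards the past, i.e.\ towards $x_o$, can occur, because two distinct maximisers from $x_o$ can meet again. In the flat cylinder $\RR\times S^1$ with $-dt^2+d\theta^2$, which satisfies every hypothesis, each $(T,\pi)$ with $T>\pi$ is reached from $(0,0)$ by two maximisers. Timelike non-branching only shows that such a point is terminal on every ray through it. It does not make the set of such points negligible, and these are exactly the points where ``lying on a common ray'' fails to be transitive.

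\emph{Forward branching is empty here.} Suppose $(x,z),(x,w)\in\Gamma$ with $z,w\neq x$. Regularity gives $x\ll z$ and $x\ll w$. Prefixing maximisers $x\to z$ and $x\to w$ with one maximiser $x_o\to x$ gives two arclength-parametrised geodesics that agree on $[0,u(x)]$. By non-branching one is an initial piece of the other, so $(z,w)\in\Gamma\cup\Gamma^{-1}$.

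So your target ``a.e.\ point lies on a unique maximal geodesic from $x_o$'' is correct, but it is the past-branching set that must be shown null. Your mechanism also does not produce the contradiction: $\TMCP^e$ with a Dirac target only yields \emph{some} $\ell_p$-geodesic and forces no splitting. What works is the following.
\begin{enumerate}
\item Take a compact $C$ of positive measure in the past-branching set, and fix $0<c<\min_C u$.
\item Select measurably $z(x)\neq w(x)$ on two maximisers from $x_o$ to $x$ at the level $u=c$. By non-branching, distinct maximisers from $x_o$ to $x$ meet only at their endpoints, so $z(x)\neq w(x)$.
\item Whenever $y\le x$ and $u(y)=c$, one has $\tau(y,x)\le u(x)-c$, with equality iff $(y,x)\in\Gamma$.
\item Hence, with $\mu_0=\mfm\llcorner_C/\mfm(C)$, the coupling $\frac12\bigl((\mathrm{Id},z)_\#\mu_0+(\mathrm{Id},w)_\#\mu_0\bigr)$ is $\ell_p$-optimal for the causally reversed structure but is not induced by a map.
\item This contradicts the Cavalletti--Mondino result that $\TMCP^e$ plus timelike non-branching (here of the reversed structure) forces optimal couplings out of $\mfm$-absolutely continuous measures to be induced by maps.
\end{enumerate}

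The curvature-localization step also fails as described. $\TMCP^e$ only allows Dirac targets, and the $X_\alpha$ are pairwise disjoint. So a future target $x_1$ does not, in general, lie ``further up'' the needles carrying $\mu_0$. The $\ell_p$-geodesic to $\delta_{x_1}$ then moves mass transversally to the partition and does not disintegrate along it.

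What does disintegrate is the reversed-structure contraction to $\delta_{x_o}$. It gives only contraction of $\mfm_\alpha$ towards the endpoint $\gamma_\alpha(0)=x_o$. This one-sided inequality is enough for the Bishop--Gromov bound \eqref{eq:bishopgromov} centred at $x_o$, which is what the paper actually uses in Proposition~\ref{prop:1}. On an interval, however, it does not imply $\MCP(K,N)$, which requires contraction towards every point of the needle. So your claim that the inequalities you obtain are ``exactly $\MCP(K,N)$'' is unjustified, and the last bullet needs a further argument, as in \cite{cavalletti-mondino2024}.
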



\subsection{Lorentzian generalized cones}\label{ss:generalized cones}
An important class of Lorentzian pre-length spaces is given by the so-called \textit{Lorentzian generalized cones}. We briefly recall this construction here and refer the reader to \cite{alexander-graf-kunzinger-saemann2023} for details.

Let $(Y,d_Y)$ be a metric space and $I \subseteq \mathbb{R}$ an open interval. Set $X := I \times Y$ and equip $X$ with the product metric
\[
    D\bigl((t,y),(t',y')\bigr) := |t - t'| + d_Y(y,y'), 
    \quad (t,y),(t',y') \in X.
\]
Let $f \colon I \to (0,\infty)$ be continuous.  Let $\gamma \colon J \to X$ be an absolutely continuous curve with respect to $D$.  Such a curve has components $\gamma = (\alpha,\beta)$, where 
\[
\alpha \colon J \to I 
\quad \text{and} \quad 
\beta \colon J \to Y
\]
are both absolutely continuous, and the metric derivative of $\beta$, $v_\beta$, exists almost everywhere (cf.~\cite[Theorem~1.1.2]{AGS05}). We additionally require that $\alpha$ is strictly monotone.  The curve $\gamma$ is called 
\[
\begin{cases}
    \text{timelike}\\
    \text{null}\\
    \text{causal}
\end{cases}
\quad \text{ if }\quad 
-\dot\alpha^2+(f\circ \alpha)^2v_\beta^2 \quad
\begin{cases}
    < 0, \\
    = 0, \\
    \leq 0,
\end{cases}
\]
almost everywhere. It is called future/past directed causal if $\alpha$ is strictly monotonically increasing/decreasing, i.e., $\dot\alpha > 0$ or $\dot\alpha < 0$ almost everywhere.

We can define causal relations on $X$ as follows. Let $x, x' \in X$, then $x$ and $x'$ are timelike (causally) related, and we write $x\ll x'$ (resp. $x\leq x'$) if there exists a future-directed timelike (resp. causal) curve from $x$ to $x'$. We also declare $x\leq x$ for any $x\in X$.

Moreover, given a causal curve $\gamma=(\alpha,\beta)\colon [a,b]\to X$, we define its length by 
\[
L(\gamma) = \int_{a}^{b}\sqrt{\dot\alpha^2-(f\circ \alpha)^2v_\beta^2}
\]
and given $x\leq y$ in $X$, we set
\[
\tau(x,y) = \sup\{L(\gamma):\gamma\ \text{ is a causal curve from $x$ to $y$}\}.
\]
Then $(X,D,\ll,\leq,\tau)$ is a Lorentzian pre-length space, which we denote by $\prescript{-}{}J\times_f Y$.

\section{Doublings and Enlargements of Chronological Diamonds}

In this section we will prove Theorem~\ref{thm:main1}. 
While in metric spaces it is easy to describe what we mean by the ``doubling'' of a ball (namely the ball with the same center and twice the radius) defining an analogous notion in the Lorentzian setting is not straightforward. Recently, Cavalletti--Mondino proved a timelike Bishop--Gromov inequality for sublevel sets of $\tau$ \cite[Proposition~3.5]{cavalletti-mondino2024}, and building on this, McCann--S\"amann proved that the measure of such sublevel set satisfies a doubling property \cite[Lemma~5.1]{mccann-saemann2022}. However neither of these results work with chronological or causal diamonds, which are natural objects to work with in Lorentzian geometry, especially aiming towards compactness results with respect to the Lorentzian Gromov--Hausdorff convergence. 

One potential way to define the doubling of a chronological diamond $I_1=I(x,y)$ is to assume that there exists a timelike geodesic $\gamma:[0,1]\to X$ such that $\gamma(0)=x,\ \gamma(1)=y$ and $\gamma$ can be prolonged to a geodesic $\gamma:[0,2]\to X$. Then we could call $I_2:=I(\gamma(0),\gamma(2))$ the doubling of $I(\gamma(0),\gamma(1))=I_1$. While this definition is quite natural and convenient to work with, for it to work one has to assume a lot of regularity for the space. A more general definition is to call any chronological diamond $I_2=I(p,q)$ such that $I_1\subset I_2$ and $\tau(p,q)\leq 2\, \tau(x,y)$ a doubling of $I_1$. 
Then, given bounds on the curvature and the dimension of our space, it is our goal to find a constant $C>0$ such that $\mfm(I_2)\leq C\, \mfm(I_1)$ holds. 
More generally, given $\rho>1$, we want to show that there exists a constant $C>0$ depending on the curvature and the dimension of the space, as well as $\rho$, such that for any two diamonds $I_1\subset I_2$ with $\diam^\tau (I_2)\leq\rho\, \diam^\tau (I_1)$, the following holds:
\[
\mfm(I_2)\leq C\, \mfm(I_1).
\]
Let us make all of this precise.

\begin{definition}\label{def:enlargement}
    Let $\rho>1$ and let $I_1$ be a chronological diamond. We call a chronological diamond $I_2\supset I_1$ a \emph{$\rho$-enlargement} of $I_1$ if 
    $$\diam^\tau(I_2)\leq\rho\, \diam^\tau(I_1).$$
    If furthermore we have $I_1=I(x_1,y_1),\ I_2= I(x_2,y_2)$ with $x_1,x_2,y_1$ and $y_2$ all lying on one timelike geodesic and either $x_1=x_2$, or $y_1=y_2$, we call $I_2$ an \emph{aligned $\rho$-enlargement} of $I_1$. 
    
    Let $\Lambda\in(0,\infty]$. We say that a measured Lorentzian pre-length space has the \emph{$(\rho,\Lambda)$-enlargement property} with constant $C$ if for all $I_1\subset I_2$ such that $I_2$ is a $\rho$-enlargement of $I_1$ and $\diam^\tau(I_2)\leq\Lambda$ we have
    $$\mfm(I_2)\leq C\, \mfm(I_1).$$
    If this only holds for aligned $\rho$-enlargements, we say that the space has the \emph{aligned $(\rho,\Lambda)$-enlargement property}. Finally if $\rho=2$ we call $I_2$ an (aligned) \emph{doubling} of $I_1$ and if in addition $\Lambda=\infty$ we say that the space has the (aligned) \emph{doubling property} with doubling constant $C$.
\end{definition}

\begin{remark}
    Note that the notions of $(\rho,\Lambda)$-enlargements and aligned $(\rho,\Lambda)$-enlargements precisely correspond to the two different notions of doublings discussed in the paragraph before the definition. Also note that of course any aligned $\rho$-enlargement is in particular a $\rho$-enlargement. For this reason the $(\rho,\Lambda)$-enlargement property is stronger than the aligned $(\rho,\Lambda)$-enlargement property.
\end{remark}

The following proposition is an analogue to the known fact in the positive-signature setting that doubling Radon measures have full support (see, for example, \cite[Proposition~18.4]{villani}).

\begin{proposition}\label{pop:doubling implies full support}
Let $\LLS$ be a path-connected, strongly causal, regularly localizable, Lorentzian pre-length space, $\mfm$ a non-negative Radon measure on $X$ that is not identically zero, and such that the $(\rho,\Lambda)$-enlargement property holds for some $\rho>1$ and $\Lambda > 0$. 
Then $\supp(\mfm) = X$.
\end{proposition}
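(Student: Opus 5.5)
We argue by contradiction, mirroring the metric proof that doubling measures have full support. Since $\supp(\mfm)$ is closed and $\mfm\neq 0$, the support is nonempty. Suppose it is not all of $X$, so $Z:=X\setminus\supp(\mfm)$ is a nonempty open set with $\mfm(Z)=0$. The aim is to find a chronological diamond $I_1\subset Z$ (hence $\mfm(I_1)=0$) together with a chain of enlargements $I_1\subset I_2\subset\dots\subset I_n$, each a $\rho$-enlargement of the previous one with $\tau$-diameter at most $\Lambda$, whose last member has positive measure. The $(\rho,\Lambda)$-enlargement property gives $\mfm(I_{j+1})\le C\,\mfm(I_j)$ at each step, and induction yields $\mfm(I_n)=0$, a contradiction.

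\textbf{Step 1: nonempty diamonds near every point.} Regular localizability gives, around each point, a localizing neighbourhood in which points can be joined by timelike curves. Together with strong causality, every point $x$ therefore has arbitrarily small neighbourhoods containing chronological diamonds $I(p,q)\ni x$ with $p\ll x\ll q$ and $\tau(p,q)$ as small as we like. We use strong causality to choose such diamonds inside any prescribed open set around $x$. Applying this at a point of $Z$ gives a diamond $I_1\subset Z$, and applying it at a point of $\supp(\mfm)$ gives a diamond $I'$ with $\mfm(I')>0$, since $I'$ is an open neighbourhood of a support point.

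\textbf{Step 2: propagation along chains.} Path-connectedness lets us join the point of $Z$ to the point of $\supp(\mfm)$ by a continuous curve $c$. We cover $c$ by finitely many small diamonds of the kind produced in Step 1. Consecutive diamonds in this cover overlap, and we link them through a common diamond. Now suppose $I(p,q)\subset I(p',q')$ with $\tau(p',q')\le\Lambda$. Even if $\tau(p',q')$ exceeds $\rho\,\tau(p,q)$, we interpolate by nested diamonds $I(p_j,q_j)$, taking points on timelike curves from $p'$ to $p$ and from $q$ to $q'$, so that $\tau$ grows by at most a factor $\rho$ at each step. Each step is then a $\rho$-enlargement, so $\mfm$-nullity of the smaller diamond propagates to the larger one.

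\textbf{Step 3: propagating nullity downward.} Nullity also propagates downward for free: if $I(p,q)\subset I(p',q')$ and $\mfm(I(p',q'))=0$, then $\mfm(I(p,q))=0$. Moving from one diamond to a neighbouring diamond therefore means going up to a small diamond containing both and back down. Thus nullity spreads along the whole finite chain and forces $\mfm(I')=0$, contradicting Step 1.

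The main obstacle is the interpolation in Step 2. We must ensure that the intermediate diamonds $I(p_j,q_j)$ are genuinely nested and that their $\tau$-diameters increase continuously with ratio at most $\rho$. Lower semicontinuity of $\tau$ alone does not give continuity, so we would work inside a localizing neighbourhood, where $\tau$ is continuous and realised by maximal curves. There we would choose $p_j$ and $q_j$ along such curves, using the reverse triangle inequality to control the ratios. A secondary concern is keeping all diameters below $\Lambda$, which we handle by taking every diamond in the cover small.
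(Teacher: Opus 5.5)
Your mechanism is the paper's in contrapositive form. The inequality $\mfm(I_2)\le C\,\mfm(I_1)$ pushes nullity up a chain of $\rho$-enlargements exactly as it pushes positivity down, and your path-plus-finite-cover step matches the paper's.

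\textbf{The gap.} The problem is how you pass between two overlapping diamonds $D_1,D_2$ of the cover. You go up from $D_1$ to ``a small diamond containing both'' and then down to $D_2$. Nothing in the hypotheses produces a chronological diamond containing two given overlapping ones: that needs a common chronological past point of their lower tips and a common future point of their upper tips.

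This can fail in admissible spaces. Take the open subset $X=\{(t,u):t>-|u|\}$ of $\mathbb{R}^{1,1}$, the Minkowski plane minus the closed past cone of the origin. It is a strongly causal, path-connected smooth spacetime, so the structural hypotheses hold. Let $p_1=(-\delta,\tfrac32\delta)$, $p_2=(-\delta,-\tfrac32\delta)$ and $q=(3\delta,0)$. Then $I(p_1,q)$ and $I(p_2,q)$ overlap (both contain $(\delta,0)$) and have $\tau$-diameter of order $\delta$. Yet no chronological diamond of $X$ contains both. Any point chronologically below $p_1+(\eta,0)$ and $p_2+(\eta,0)$ with $0<\eta<\tfrac52\delta$ satisfies $t<-|u|$, so it lies in the removed cone.

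Relatedly, your interpolation in Step 2 presupposes $p'\ll p$ and $q\ll q'$. This does not follow from $I(p,q)\subset I(p',q')$, since the tips may be only null related. It holds only when the inner diamond is constructed from the outer one.

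\textbf{The repair.} Reverse the order; this is exactly the paper's argument. Given $D_2=I(x,y)$ with $\tau(x,y)\le\Lambda$ and a point $w\in D_1\cap D_2$, choose $x'$ on a timelike realiser from $x$ to $w$ and $y'$ on one from $w$ to $y$. Take them close enough to $w$ that $I(x',y')\subset D_1\cap D_2$, which strong causality allows. Nullity passes down from $D_1$ to $I(x',y')$ for free. It then passes up to $D_2$ along $x=x'_n\ll\dots\ll x'_0=x'$, $y'=y'_0\ll\dots\ll y'_n=y$, giving $\mfm(D_2)\le C^n\,\mfm(I(x',y'))=0$.

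Read positively, this is the paper's claim that a diamond with $\tau\le\Lambda$ meeting $\supp(\mfm)$ is contained in $\supp(\mfm)$. The paper then concludes by a connectedness argument along a path; phrasing the ending as a contradiction instead makes no difference.

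Finally, your justification of the ratio control is not what localizability provides. A localizing neighbourhood carries a continuous local time separation $\omega_x\le\tau$, not continuity of $\tau$ itself. The paper also takes the choice of interpolating points for granted, so this is not the decisive issue.
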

\begin{proof}
Since $\mfm$ is not identically zero, $K:=\supp(\mfm)\neq \varnothing$. First, we claim that if $I(x,y)$ is a chronological diamond such that $\tau(x,y)\leq \Lambda$ and $I(x,y)\cap K\neq \varnothing$ then $I(x,y)\subset K$:

Indeed, let $p\in I(x,y)$. Since $I(x,y)\cap K\neq \varnothing$ and $I(x,y)$ is open, $\mfm(I(x,y)) > 0$. Now let $\{I(u_j,v_j)\}_{j=1}^{N}$ be such that $p\in \bigcap_{j=1}^{N} I(u_j,v_j)=:U$. By strong causality, it is sufficient to show that $\mfm(U)>0$. 

Since $X$ is regularly localizable, distance realisers in $X$ have a causal character \cite[Theorem~3.18]{kunzinger-saemann2018}. In particular, by moving along distance realisers $[x,p]$ and $[p,y]$, which are continuous and timelike, we can find $x'\in [x,p]$ and $y'\in [p,y]$ such that $p\in I(x',y') \subset I(x,y)\cap U$. By taking $x = x'_n \ll \dots \ll x'_0 = x'$ in $[x,p]$ and $y'= y'_0\ll \dots \ll y'_n = y$ in $[p,y]$ such that
\[
\tau(x'_{i+1},y'_{i+1}) \leq \rho\, \tau(x'_i,y'_i),\qquad i=0,\dots, n-1,
\]
the $(\rho,\Lambda)$-enlargement property implies that for some $C>0$,
\[
\mfm(I(x'_{i+1},y'_{i+1}))\leq C\, \mfm(I(x'_i,y'_i)),\qquad i=0,\dots, n-1.
\]
It follows that 
\[
0< \mfm(I(x,y)) \leq C^n\, \mfm(I(x',y')) \leq C^n\, \mfm(U). 
\]
In particular, $\mfm(U) > 0$ and the claim follows. 

For general $p\in X$, let $\gamma\colon [a,b]\to X$ be a continuous path joining $p$ with any point in $K$. Then, by regular localizability and compactness of $\gamma([a,b])$, we can find $\{I(x_i,y_i)\}_{i=1}^{k}$ covering $\gamma([a,b])$ and such that $\tau(x_i,y_i)\leq \Lambda$. By connectedness of $\gamma([a,b])$ and the previous argument, it follows that $\gamma([a,b])\subset K$, and in particular $p\in K$. In other words, $X =\supp(\mfm)$.
\end{proof}

In analogy with the Riemannian and metric settings, one might expect a space to satisfy a doubling property whenever its Ricci curvature is bounded from below in timelike directions, or more generally whenever it satisfies either the $\TCD^e_p(K,N)$-condition (see \cite[Definition~3.2]{cavalletti-mondino2024}) or the $\TMCP^e(K,N)$-condition (Definition~\ref{def:tmcp}) for some constants $K,N$. Recall that if a metric measure space $X$ satisfies the $\cd(K,N)$-condition, then the doubling property holds on every bounded subset of the support of $\mfm$ \cite[Corollary 2.4]{sturm2}. We now show, however, that in the Lorentzian setting neither the $\TCD^e_p$- nor the $\TMCP^e$-condition implies a doubling property in the sense of Definition~\ref{def:enlargement}, at this level of generality.

\begin{example}
    Consider $2$-dimensional anti-de Sitter space of radius $r$. Recall that this space can be viewed as a surface in $\mathbb R^3_2$ given by the following parametrization:
    \begin{equation}\label{eq:AdS}
    \begin{split}
        &\phi:(-\pi/2,3\pi/2)\times\mathbb R\to \mathbb{R}^3_2\\
        &\phi(t,x)=(\sqrt{r^2+x^2}\sin(t),\sqrt{r^2+x^2}\cos(t),x).
    \end{split}
    \end{equation}
    This space has bounded finite diameter (namely $r\pi$) and its Ricci curvature tensor is given by $\Ric (X,Y)=-\frac{1}{r^2}\langle X,Y\rangle$. In particular the space above satisfies $\TMCP^e(\frac{1}{r^2},2)$. For simplicity, from now on consider the case $r=1$. Take the timelike geodesic $\gamma:[0,\pi]\to X$, $\gamma(t)=(\sin(t),\cos(t),0)$ and define 
    \begin{align*}
        &x_0:=\gamma(0)=(0,1,0)\\
        &y_1:=\gamma(\pi/2)=(1,0,0)\\
        &y_2:=\gamma(\pi)=(0,-1,0)
    \end{align*}
    as well as $I_1:=I(x_0,y_1),\ I_2:=I(x_0,y_2)$. With this choice $I_2$ is clearly an aligned doubling of $I_1$. A quick calculation yields that $\mfm(I_2)=\infty$, while $\mfm(I_1)<\infty$, so clearly the (aligned) doubling property fails for any $C>0$. The space we are looking at is not globally hyperbolic so perhaps this counterexample can be ignored. However let us now take the (globally hyperbolic) subspace $X:=I(x_0,y_2)$ then there still does not exist a $C>0$ such that $X$ has the doubling property with doubling constant $C$. Indeed take
    \begin{align*}
        &x_0^i:=(\sin(1/i),\cos(1/i),0),\\
        &y_2^i:=(\sin(\pi-1/i),\cos(\pi-1/i),0),
    \end{align*}
    (i.e. we take sequences of points converging to $x_0$, $y_2$ respectively) and define $I_1^i:=I(x_0^i,y_1),\ I_2^i:=I(x_0^i,y_2^i)$. Then we will have $\mfm(I_1^i)\to\mfm(I_1)<\infty$, while $\mfm(I_2^i)\to\infty$, i.e., we cannot find a uniform doubling constant $C$ for all $i$.
\end{example}

The existence of the $\Lambda$ in our definition of the $(\rho,\Lambda)$-enlargement property already suggests that what went wrong in the above example was the lack of a restriction on the size of the diamonds. To fix this we introduce the following notion:

\begin{definition}
    Given $\rho>1,\ c\in(0,1),\ \Lambda\in(0,\infty]$, we say that a pair of aligned chronological diamonds $I_1=I(x,y_1),\ I_2=I(x,y_2)$ satisfies the \textit{$(\rho,c,\Lambda)$-condition} if $\diam^\tau (I_2)\leq\Lambda$, $I_2$ is an aligned $\rho$-enlargement of $I_1$ and for all constant-speed timelike geodesics $\gamma:[0,1]\to I_2$ such that $\gamma(0)=x$, the following implication holds:
    \[
    \gamma(1)\in I_2\ \Rightarrow \ \gamma(c)\in I_1.
    \]
    We say that a Lorentzian pre-length space satisfies the \emph{$(\rho,c,\Lambda)$-condition} if all pairs of chronological diamonds $I_1,I_2$, such that $\diam^\tau (I_2)\leq\Lambda$ and $I_2$ is an aligned $\rho$-enlargement of $I_1$, satisfy the $(\rho,c,\Lambda)$-condition. 
\end{definition}

We can now prove that by imposing this additional restriction on a space, it will satisfy the $(\rho,\Lambda)$-enlargement property.

\begin{proposition}\label{prop:1}
    Let $\mathcal X$ be the class of globally hyperbolic, timelike non-branching, regular measured Lorentzian length space satisfying the $\TMCP^e(K,N)$-condition, such that the causally reversed structure satisfies the same condition and all $X\in\mathcal X$ satisfy the $(\rho,c,\Lambda)$-condition for some $\rho>1,\ c\in(0,1),\ \Lambda\in(0,\infty]$ (where we only allow $\Lambda=\infty$ if $K\geq0$).
    
    Then there exists a constant $C=C(K,N,c,\Lambda)>0$ such that all $X\in\mathcal X$ have the aligned $(\rho,\Lambda)$-enlargement property with constant $C$. 
\end{proposition}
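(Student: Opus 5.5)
We prove the aligned $(\rho,\Lambda)$-enlargement property by disintegrating $\mfm$ along needles emanating from the common vertex and comparing the two diamonds on each needle with the one-dimensional $\MCP(K,N)$ inequality. Let $I_1=I(x,y_1)\subset I_2=I(x,y_2)$ be an aligned $\rho$-enlargement with $\diam^\tau(I_2)\le\Lambda$. We treat the case of a common past vertex $x$; the case of a common future vertex $y_1=y_2$ is identical after passing to the causally reversed structure, which satisfies the same hypotheses. We apply Theorem~\ref{thm:needle-decomposition} with $x_o=x$, which gives
\[
\mfm\llcorner_{I^+(x)}=\int_Q\mfm_\alpha\,d\mfq(\alpha),
\]
with needles $\overline X_\alpha=\gamma_\alpha([0,d_\alpha])$ that are timelike geodesics starting at $x$. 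Since $I_1,I_2\subset I^+(x)$, it suffices to prove $\mfm_\alpha(I_2)\le C\,\mfm_\alpha(I_1)$ for $\mfq$-a.e.\ $\alpha$ and then integrate.

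\emph{Shape of the needle sets.} Fix $\alpha$ and set $A_i:=\{s:\gamma_\alpha(s)\in I_i\}$. We claim each $A_i$ is an interval $(0,a_i)$. Indeed, if $\gamma_\alpha(s)\ll y_i$ then $\gamma_\alpha(s')\ll y_i$ for all $s'<s$, and $x\ll\gamma_\alpha(s')$ for $s'>0$. Moreover $a_2\le\tau(x,y_2)\le\Lambda$.

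\emph{Applying the $(\rho,c,\Lambda)$-condition.} Reparametrize $\gamma_\alpha|_{[0,s]}$ to a constant-speed geodesic on $[0,1]$. For every $s<a_2$ its endpoint lies in $I_2$, and the curve stays in $I_2$: points of $\gamma_\alpha((0,s])$ lie in $I^+(x)$ and precede $\gamma_\alpha(s)\ll y_2$. The $(\rho,c,\Lambda)$-condition then gives $\gamma_\alpha(cs)\in I_1$, so $cs\le a_1$. Letting $s\uparrow a_2$ yields $a_1\ge c\,a_2$. This is the step I expect to be the main obstacle. The difficulty is that geodesics in $I_2$ must actually fall under the hypothesis of the condition (constant-speed, contained in $I_2$), and that the needle interval is open/closed correctly. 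Concretely, one must check that $\gamma_\alpha((0,s])\subset I_2$, and handle the endpoint $d_\alpha$ when the needle leaves $I_2$. Note that the condition must be applied along the needle geodesics rather than along the aligned geodesic through $y_1,y_2$; this is precisely why the hypothesis quantifies over all geodesics from $x$.

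\emph{One-dimensional Bishop--Gromov.} For $\mfq$-a.e.\ $\alpha$, write $\mfm_\alpha=h_\alpha\,\mathcal L^1$ on $[0,d_\alpha]$ with $h_\alpha$ an $\MCP(K,N)$ density. Since the needle starts at $x$, the standard MCP estimate with base point $0$ gives that
\[
r\mapsto \frac{h_\alpha(r)}{\mfs_{K/(N-1)}(r)^{N-1}}
\]
is nonincreasing on $(0,d_\alpha)$ (within the range $r<\pi\sqrt{(N-1)/K}$ when $K>0$). Integrating, we obtain
\[
\frac{\mfm_\alpha((0,a_2))}{\mfm_\alpha((0,a_1))}\le\frac{\int_0^{a_2}\mfs_{K/(N-1)}^{N-1}}{\int_0^{a_1}\mfs_{K/(N-1)}^{N-1}}\le\sup_{0<r\le\Lambda}\frac{\int_0^{r}\mfs_{K/(N-1)}^{N-1}}{\int_0^{cr}\mfs_{K/(N-1)}^{N-1}}=:C(K,N,c,\Lambda).
\]
This supremum is finite: as $r\to0$ the ratio tends to $c^{-N}$, and the quotient is continuous on compact ranges. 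When $K<0$ it is finite on bounded $r$, which is why $\Lambda=\infty$ is only allowed for $K\ge0$. For $K>0$ we use that $\Lambda$ stays below the MCP diameter bound, or note that needles automatically have length at most $\pi\sqrt{(N-1)/K}$. Integrating the needle inequality over $\mfq$ gives $\mfm(I_2)\le C\,\mfm(I_1)$, with $C$ independent of $X$ and of $\rho$ (which enters only through the hypothesis).
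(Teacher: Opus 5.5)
Your proposal is correct and is essentially the paper's proof. You disintegrate $\mfm\llcorner_{I^+(x)}$ into needles from the common vertex, show each needle meets $I_i$ in an initial segment with $a_1\ge c\,a_2$ (apply the $(\rho,c,\Lambda)$-condition along the needle and let $s\uparrow a_2$), then bound each needle by the one-dimensional Bishop--Gromov ratio using $a_2\le\Lambda$ and integrate over $\mfq$.

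Your direct push-up argument replaces the paper's Lemma~\ref{lem:disintegration}. Your remarks on $K>0$ (needles have length at most $\pi\sqrt{(N-1)/K}$) and on the common-future-vertex case make explicit what the paper leaves implicit. The paper treats only $x_1=x_2$, and the reversed case needs the $(\rho,c,\Lambda)$-condition read in the time-reversed sense.
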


The proof will heavily rely on \Cref{thm:needle-decomposition}. Recall that the $\MCP(K,N)$-condition on a metric measure space implies the following volume growth estimate for the measure of balls $B_r(p)$:
\begin{equation}\label{eq:bishopgromov}
    \frac{\mfm(B_R(p))}{\mfm(B_r(p))}\leq\frac{\int_0^R\mfs_{K/(N-1)}(t)^{N-1}dt}{\int_0^r\mfs_{K/(N-1)}(t)^{N-1}dt}\hspace{1cm}(0<r\leq R)
\end{equation}
where
\begin{equation*}
    \mfs_\kappa(t):=\begin{cases}
        \frac{1}{\sqrt\kappa}\sin(\sqrt{\kappa}t)&\kappa>0,\\
        t&\kappa=0,\\
        \frac{1}{\sqrt{-\kappa}}\sinh(\sqrt{-\kappa}t)&\kappa<0.
    \end{cases}
\end{equation*}
We also define 
\[
    \mfc_\kappa(t) := \mfs_\kappa'(t) =
    \begin{cases}
        \cos(\sqrt{\kappa}t)&\kappa>0,\\
        1&\kappa=0,\\
        \cosh(\sqrt{-\kappa}t)&\kappa<0.
    \end{cases}
\]

Before proving \Cref{prop:1} we first need to show the following auxiliary result. 

\begin{lemma}\label{lem:disintegration}
Let $X$ be a globally hyperbolic, timelike non-branching, regular 
measured Lorentzian length space such that $X$ and its causally reversed structure satisfy the $\TMCP^e(K,N)$-condition. Let $x\ll y_1\ll y_2$ be points in $X$ and define $J_1:=J(x,y_1),\ J_2:=J(x,y_2)$. Let $\mfq$, $Q$, and for $\mfq$-a.e. $\alpha\in Q$, $X_\alpha$, $\gamma_\alpha$, and $I_\alpha = [0,d_\alpha]$ be given by \Cref{thm:needle-decomposition} applied to $\mfm\llcorner_{I^+(x)}$. Then for $\mfq$-a.e. $\alpha\in Q$ there exist 
$0\leq t_{\alpha,1}\leq t_{\alpha,2}\leq d_\alpha$ such that $\gamma_\alpha([0,t_{\alpha,i}]) = \overline{X}_\alpha\cap J_i$, for $i=1,2$ (see \Cref{fig:t-alpha}).
\end{lemma}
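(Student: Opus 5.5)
The plan is to show that, for each needle, the set of parameters $s\in I_\alpha$ with $\gamma_\alpha(s)\in J_i$ is an initial segment $[0,t_{\alpha,i}]$ of $I_\alpha$, and then to compare the two segments. The first step is to verify that $0$ belongs to this set. By \Cref{thm:needle-decomposition}, $\gamma_\alpha$ starts at $x$, so $\gamma_\alpha(0)=x$. Since $x\le x\le y_i$, we get $x\in J_i$.

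The key step is monotonicity along the needle. Suppose $\gamma_\alpha(s)\in J_i$ and $0\le s'\le s$. Because $\gamma_\alpha$ is a future-directed causal curve, $\gamma_\alpha(s')\le\gamma_\alpha(s)\le y_i$. Together with $x=\gamma_\alpha(0)\le\gamma_\alpha(s')$ and transitivity of $\le$, this gives $\gamma_\alpha(s')\in J_i$. Hence $A_i:=\{s\in I_\alpha:\gamma_\alpha(s)\in J_i\}$ is an interval containing $0$.

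Next we show $A_i$ is closed. In a globally hyperbolic Lorentzian length space the causal relation $\le$ is closed, so $J(x,y_i)$ is closed, indeed compact. Since $\gamma_\alpha$ is continuous, $A_i=\gamma_\alpha^{-1}(J_i)$ is closed in $I_\alpha$. It is also bounded, because $J_i$ is compact and $\gamma_\alpha$ is parametrized by $\tau$-arclength: $s=\tau(x,\gamma_\alpha(s))\le\tau(x,y_i)<\infty$. Therefore $A_i=[0,t_{\alpha,i}]$ with $t_{\alpha,i}:=\max A_i\le d_\alpha$; when $d_\alpha=\infty$ the bound $t_{\alpha,i}<\infty$ is automatic. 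It follows that $\gamma_\alpha([0,t_{\alpha,i}])=\overline X_\alpha\cap J_i$, since $\overline X_\alpha=\gamma_\alpha(I_\alpha)$.

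Finally, $y_1\ll y_2$ gives $J_1\subset J_2$, hence $A_1\subset A_2$ and $t_{\alpha,1}\le t_{\alpha,2}$. The argument works for every $\alpha$ whose needle closure is a timelike geodesic from $x$, which includes $\mfq$-a.e. $\alpha$.

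The only delicate point is closedness of $J(x,y_i)$, which relies on global hyperbolicity (causal closedness) as established in \cite{kunzinger-saemann2018}. A smaller point is that the conclusion is stated with $\overline X_\alpha$, to which the needle decomposition attaches the geodesic, rather than with $X_\alpha$.
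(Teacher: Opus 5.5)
Your proof is correct, but it takes a different route from the paper.

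The paper constructs $t_{\alpha,i}$ explicitly as the first parameter at which $t\mapsto\tau(\gamma_\alpha(t),y_i)$ vanishes. This uses continuity of $\tau$ from global hyperbolicity. It then uses closedness of $J_i$ to get $\gamma_\alpha(t_{\alpha,i})\in J_i$, and rules out re-entry via the reverse triangle inequality together with the \emph{timelike} character of $\gamma_\alpha$. This forces a case distinction on whether $\overline{X}_\alpha$ lies inside $J_1$ or $J_2$, plus a separate contradiction argument for $t_{\alpha,1}\le t_{\alpha,2}$.

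You instead observe three things about $\gamma_\alpha^{-1}(J_i)$:
\begin{enumerate}
\item it is downward closed, by transitivity of $\le$ along a causal curve starting at $x$;
\item it is closed, by compactness of $J_i$;
\item it is bounded, by the reverse triangle inequality and finiteness of $\tau$.
\end{enumerate}
So it is automatically of the form $[0,t_{\alpha,i}]$, and $t_{\alpha,1}\le t_{\alpha,2}$ comes for free from $J_1\subset J_2$. Your version is shorter, needs no case analysis, and uses neither continuity of $\tau$ nor timelikeness of the needle.

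The paper's construction does give more information directly: $\gamma_\alpha(t)\ll y_i$ for $t<t_{\alpha,i}$, and $\tau(\gamma_\alpha(t_{\alpha,i}),y_i)=0$ when the needle exits $J_i$. The proof of \Cref{prop:1} invokes exactly this (``by construction of $t_{\alpha,i}$'') to pass from $J_i$ to the chronological diamonds $I_i$. From your version this costs one more line:
\begin{itemize}
\item push-up gives $\gamma_\alpha(t)\ll y_i$ for $0<t<t_{\alpha,i}$;
\item if $t_{\alpha,i}<d_\alpha$, openness of $I^-(y_i)$ together with maximality of $t_{\alpha,i}$ gives $\gamma_\alpha(t_{\alpha,i})\not\ll y_i$.
\end{itemize}
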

\begin{figure}[h]

\def \globalscale {1.000000}
\begin{tikzpicture}[y=1.5cm, x=1.5cm, yscale=\globalscale,xscale=\globalscale, every node/.append style={scale=\globalscale}, inner sep=0pt, outer sep=0pt]
  \path[draw=black,line width=0.05cm] (4.5, 28.3) -- (1.8, 25.6) -- (4.2, 23.2) 
  -- (6.9, 25.7) -- cycle;
  \path[draw=black,line width=0.05cm] (4.4, 26.3) -- (2.8, 24.6) -- (4.2, 23.2) 
  -- (5.8, 24.68) -- cycle;
  \draw [-stealth,line width=0.05cm] (4.2, 23.2) -- (6.2, 27.9);

  \draw [fill=black] (4.2,23.2) circle (0.08cm);
  \draw [fill=black] (4.4,26.3) circle (0.08cm);
  \draw [fill=black] (4.5,28.3) circle (0.08cm);
  \draw [fill=black] (5.78,26.91) circle (0.08cm);
  \draw [fill=black] (5.15,25.43) circle (0.08cm);
  
  \node[text=black,anchor=south west,line width=0.0cm] (text4) at (6.2, 
  28){$\gamma_\alpha$};
  \node[text=black,anchor=south west,line width=0.0cm] (text5) at (5.95, 
  26.9){$\gamma_{\alpha}(t_{\alpha,2})$};
  \node[text=black,anchor=south west,line width=0.0cm] (text6) at (5.3, 
  25.4){$\gamma_{\alpha}(t_{\alpha,1})$};
  \node[text=black,anchor=south west,line width=0.0cm] (text7) at (4.1, 
  22.95){$x$};
  \node[text=black,anchor=south west,line width=0.0cm] (text9) at (4.3, 
  26.4){$y_1$};
  \node[text=black,anchor=south west,line width=0.0cm] (text10) at (4.4, 
  28.4){$y_2$};
\end{tikzpicture}
\caption{Setup of \Cref{lem:disintegration}}
\label{fig:t-alpha}
\end{figure}

\begin{proof}
By \Cref{thm:needle-decomposition}, for $\mfq$-a.e. $\alpha\in Q$, the timelike geodesic $\gamma_\alpha\colon [0,d_\alpha]\to X$ is parametrized by arclength and satisfies $\gamma_\alpha([0,d_\alpha]) = \overline{X}_\alpha$. If $\overline{X}_\alpha \subset J_1$ then we simply take $t_{\alpha,1}=t_{\alpha,2}=d_\alpha$. 

If $\overline{X}_\alpha \subset J_2$ but $\overline{X}_\alpha \not\subset J_1$, we let $t_{\alpha,2}=d_\alpha$ and define $t_{\alpha,1}$ as follows. By global hyperbolicity, we know that $\tau$ is continuous \cite[Theorem 3.28]{kunzinger-saemann2018}. Since $t\mapsto \tau(x,\gamma_\alpha(t))$ is increasing and continuous, it follows that there exist a minimal $t_{\alpha,1}\in [0,d_\alpha]$ such that $\tau(\gamma_\alpha(t_{\alpha,1}),y_1) = 0$ and since causal diamonds are closed it is easy to see that $\gamma_\alpha(t_{\alpha,1})\leq y_1$.
An analogous argument yields a minimal $t_{\alpha,2}\in[0,d_\alpha]$ such that $\tau(\gamma_\alpha(t_{\alpha,2}),y_2) = 0$ and $\gamma_\alpha(t_{\alpha,2})\leq y_2$, in the case $\overline{X}_{\alpha}\not\subset J_2$. Furthermore, in this case, if $t_{\alpha,2} < t_{\alpha,1}$ then, the definition of $t_{\alpha,1}$ and the fact that $\gamma_\alpha$ is timelike yield $\tau(\gamma_\alpha(t_{\alpha,2}),y_2) > 0$, which contradicts the definition of $t_{\alpha,2}$. Thus, $t_{\alpha,1}\leq t_{\alpha,2}$.

Finally, 
if $\overline{X}_\alpha \not\subset J_1$ then for any $t>t_{\alpha,1}$ the point $\gamma_\alpha(t)$ is no longer in $J_1$; otherwise, the reverse triangle inequality and the fact that $\tau(\gamma_{\alpha}(t_{\alpha,1}),x_1)=0$ would imply $\tau(\gamma_{\alpha}(t_{\alpha,1}),\gamma_{\alpha}(t))=0$, which contradicts that $\gamma_\alpha$ is timelike. Hence $\gamma_\alpha([0,t_{\alpha,1}])=\overline{X}_\alpha\cap J_1$. An analogous argument yields $\gamma_\alpha([0,t_{\alpha,2}])=\overline{X}_\alpha\cap J_2$ when $\overline{X}_\alpha\not \subset J_2$. 
\end{proof}

\begin{proof}[Proof of \Cref{prop:1}]
    Let $\rho>1,\ c\in(0,1),\ \Lambda\in(0,\infty]$ be given, pick some $X\in\mathcal X$ and choose $I_1\subset I_2\subset X$, $I_1 = I(x,y_1)$, $I_2=I(x,y_2)$, such that $I_2$ is an aligned $\rho$-enlargement of $I_1$ and $\diam^\tau (I_2)\leq\Lambda$. 
    
    Let $\mfq$, $Q$, $X_\alpha$ and $d_\alpha$ be given by \Cref{thm:needle-decomposition} applied to $\mfm\llcorner_{I^+(x)}$. For $\mfq$-a.e. $\alpha \in Q$, let $\gamma_\alpha:[0,d_\alpha]\to X$ and $0\leq t_{\alpha,1},\leq t_{\alpha,2}\leq d_\alpha$ be the parameters such that $\gamma_\alpha([0,t_{\alpha,i}])=\overline{X}_\alpha\cap J_i$, $i=1,2$, as in \Cref{lem:disintegration}. Let $t_n\to t_{\alpha,2}$ with $0<t_n<t_{\alpha,2}$. Since $\gamma_{\alpha}$ is timelike, $x\ll \gamma_{\alpha}(t_n)\ll \gamma_{\alpha}(t_{\alpha,2})$ and by the push-up property, $\gamma_\alpha(t_n) \in I_2$. The $(\rho,c,\Lambda)$-condition gives that $\gamma_\alpha(ct_n)\in I_1$. Global hyperbolicity and continuity of $\gamma_\alpha$ yield $\gamma_\alpha(c t_{\alpha,2})\in J_1$. Therefore, in particular, $c t_{\alpha,2}\leq t_{\alpha,1}$.

    Furthermore, since $\mfm_\alpha$ is concentrated on $X_\alpha$ and $\overline{X}_\alpha\cap I_i = \gamma_\alpha((0,t_{\alpha,i}))$ by construction of $t_{\alpha,i}$, we have
    \[
    \mfm_\alpha(X_\alpha\cap I_i) = \mfm_\alpha(\overline{X}_\alpha\cap I_i) = \mfm_{\alpha}(\gamma_\alpha((0,t_{\alpha,i}))) = \mfm_{\alpha}(\gamma_\alpha([0,t_{\alpha,i}))).
    \]
    This yields the following chain of inequalities:
    \begin{align}\label{equation1}
        \frac{\mfm_\alpha(X_\alpha\cap I_2)}{\mfm_\alpha(X_\alpha\cap I_1)}\le\frac{\mfm_\alpha(\gamma_\alpha([0,t_{\alpha,2})))}{\mfm_\alpha(\gamma_\alpha([0,ct_{\alpha,2})))} \le \frac{\int_{0}^{t_{\alpha,2}}\mfs_{K/(N-1)}(t)^{N-1} dt}{\int_{0}^{ct_{\alpha,2}}\mfs_{K/(N-1)}(t)^{N-1} dt}=:F_{K,N,c}(\alpha),
    \end{align}
    where the final inequality follows from \eqref{eq:bishopgromov}, which in turn is implied by the $\MCP(K,N)$-condition on $X_\alpha$. If $K\geq0$ it is easy to see that there exists a constant $C=C(K,N,c)$ such that $F_{K,N,c}(\alpha)\leq C$ for $\mfq$-almost all $\alpha\in Q$. If $K<0$ then $t_{\alpha,2}\leq\Lambda<\infty$ and hence there again exists $C=C(K,N,c,\Lambda)$ such that $F_{K,N,c}(\alpha)\leq C$ for $\mfq$-almost all $\alpha\in Q$. By integrating \eqref{equation1} over $\alpha\in Q$, we obtain
    \begin{equation*}
    \mfm(I_2) = \int_{Q} \mfm_\alpha(X_\alpha\cap I_2) d\mfq(\alpha)  \leq \int_Q F_{K,N,c}(\alpha)\mfm_\alpha(X_\alpha\cap I_1) d\mfq(\alpha)\leq C\int_Q \mfm_\alpha(X_\alpha\cap I_1) d\mfq(\alpha)=C\, \mfm(I_1).
    \end{equation*}
    This implies that $X$ has the aligned $(\rho,\Lambda)$-enlargement property with constant $C$. 
\end{proof}

While \Cref{prop:1} is a good first step towards proving certain spaces have the $(\rho,\Lambda)$-enlargement property, the assumption on the space to satisfy the $(\rho,c,\Lambda)$-condition is quite unnatural. We will try to replace it with a timelike sectional curvature bound by $k\in\mathbb R$ in the sense of \Cref{def:timelike-curvature-bounds}.

\begin{proposition}\label{prop:2}
    Let $\mathcal X$ be the class of globally hyperbolic Lorentzian length spaces, which satisfy the global $\TCBA(k)$-condition for some $k\in\mathbb R$. Fix any $\rho>1$ and if $k<0$ let $0<\Lambda\leq\frac{\pi}{2\sqrt{-k}}$, otherwise set $\Lambda=\infty$. 
    
    Then there exists a constant $c=c(k,\rho,\Lambda)$ such that all $X\in\mathcal X$ satisfy the $(\rho,c,\Lambda)$-condition.
\end{proposition}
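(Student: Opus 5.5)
The plan is to reduce the $(\rho,c,\Lambda)$-condition to a statement in the comparison plane $\mathbb L^2(k)$ via the global $\TCBA(k)$-condition, and then verify that statement by a uniform model-space computation.

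\textbf{Setup.} Let $I_1=I(x,y_1)\subset I_2=I(x,y_2)$ be an aligned $\rho$-enlargement with $\tau(x,y_2)\le\Lambda$. By alignment, $y_1$ lies on a timelike geodesic $\sigma$ from $x$ to $y_2$. Since diamonds are given by their endpoints, we have $\diam^\tau(I_i)=\tau(x,y_i)$. Hence
\[
\lambda:=\frac{\tau(x,y_1)}{\tau(x,y_2)}\in\Bigl[\frac1\rho,1\Bigr).
\]
Let $\gamma\colon[0,1]\to I_2$ be a constant-speed timelike geodesic with $\gamma(0)=x$ and $z:=\gamma(1)\in I_2$. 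Then $x\ll z\ll y_2$.

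\textbf{Reduction to the model plane.} Consider the timelike triangle $\triangle x z y_2$ with sides $\gamma$, $\sigma$ and any realiser from $z$ to $y_2$. Its side lengths are at most $\tau(x,y_2)\le\Lambda<D_k$, so it satisfies the size bounds. Take a comparison triangle $\triangle\bar x\bar z\bar y_2$ in $\mathbb L^2(k)$. The point $\gamma(c)$ lies on the side $xz$ and corresponds to $\bar\gamma(c)$ at $\bar\tau$-fraction $c$ along $\bar x\bar z$. The point $y_1$ lies on the side $xy_2$ and corresponds to $\bar y_1$ at fraction $\lambda$ along $\bar x\bar y_2$. By \eqref{eq:curvature bound definition} and Remark~\ref{rem:curvaturebound}\ref{rem:curvaturebound2}, it suffices to show $\bar\gamma(c)\ll\bar y_1$ in $\mathbb L^2(k)$. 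Thus the proposition reduces to the following.

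\emph{Model claim.} There is $c=c(k,\rho,\Lambda)\in(0,1)$ such that for every $\bar x\ll\bar y_2$ in $\mathbb L^2(k)$ with $T:=\bar\tau(\bar x,\bar y_2)\le\Lambda$, every $\bar z\in I(\bar x,\bar y_2)$, and every $\lambda\ge1/\rho$, the point at fraction $c$ along $[\bar x,\bar z]$ is chronologically before the point at fraction $\lambda$ along $[\bar x,\bar y_2]$.

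Since $I(\bar x,\cdot)$ increases along $[\bar x,\bar y_2]$, it is enough to treat $\lambda=1/\rho$.

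\textbf{Minkowski case $k=0$.} Put $\bar x=0$. Geodesics from $0$ are rays, so the point at fraction $c$ is $c\bar z\in cI(0,\bar y_2)=I(0,c\bar y_2)$. For $c<1/\rho$ this lies in $I(0,\bar y_2/\rho)$, so $c=1/(2\rho)$ works.

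\textbf{General $k$.} I would work in the exponential chart at $\bar x$ of the two-dimensional model. By homogeneity and rescaling, the configuration depends only on the parameter $kT^2$. For $k<0$ this parameter ranges over the compact interval $[k\Lambda^2,0]$, since $T\le\Lambda\le\frac{\pi}{2\sqrt{-k}}$. Set $V:=\exp_{\bar x}^{-1}(I(\bar x,\bar y_2))$ and $v:=\exp_{\bar x}^{-1}(\bar y_2)$, and write $U_\lambda:=\exp_{\bar x}^{-1}\bigl(I(\bar x,\exp_{\bar x}(\lambda v))\bigr)$. The claim becomes the inclusion
\[
cV\subset U_{1/\rho}.
\]
For this I would describe the diamond explicitly through the law of cosines in $\mathbb L^2(k)$: writing $w=s u$ with $u$ a unit future timelike vector and $\theta$ the hyperbolic angle between $u$ and $v$, the condition $\exp_{\bar x}(w)\ll\exp_{\bar x}(\lambda v)$ is an explicit inequality in $(s,\theta,\lambda T)$. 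The key facts to check are:
\begin{itemize}
\item $V$ is bounded in the radial direction uniformly in angle, and stays away from the null directions in a controlled way;
\item $U_{1/\rho}$ contains a uniform truncated cone around $\frac1\rho v$ of the right shape near $0$.
\end{itemize}
Together these give some $c(kT^2,\rho)>0$. Continuity in the parameter $kT^2$ together with compactness then yields a uniform $c$.

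For $k>0$, where $\Lambda=\infty$, there is no compactness in $kT^2$. Instead I would use the explicit de Sitter structure: the future-directed timelike geodesics from $\bar x$ are complete, and the diamonds through a fixed geodesic grow in a controlled way. The aim is to show directly that the ratio between the radial extent of $V$ and that of $U_{1/\rho}$ in each direction is bounded, uniformly in $T$.

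\textbf{Main obstacle.} I expect the main obstacle to be the uniform model-space estimate: controlling the shape of $\exp_{\bar x}^{-1}$ of diamonds near the null cone for all $T$, and especially the unbounded-$T$ regime when $k>0$. The first attempt would be a direct law-of-cosines computation, reducing to a one-variable inequality in the angle $\theta$.
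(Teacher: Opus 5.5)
Your reduction to the comparison plane is correct and is exactly the paper's argument: the triangle $\triangle xzy_2$, the comparison points at $\tau$-fractions $c$ and $\lambda$, and Remark~\ref{rem:curvaturebound}\ref{rem:curvaturebound2}. The reduction to $\lambda=1/\rho$ and your Minkowski case are also correct. The gap is the model claim for $k\neq 0$. You do not prove it; you only list what would need checking, and neither case goes through as sketched.

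\textbf{The case $k>0$.} You announce a direct de Sitter analysis for unbounded $T$ but do not supply it. The missing idea is that the comparison is monotone in $k$: $\mathbb L^2(k)$ with $k>0$ satisfies the global $\TCBA(0)$-condition \cite{beran-saemann2023}. The triangles $\triangle\bar x\bar z\bar y_2$ and $\triangle xzy_2$ have the same Minkowski comparison triangle. Hence the Minkowski statement pulls back first to $\mathbb L^2(k)$ and then to $X$, by chaining $\tau\ge\bar\tau_k\ge\bar\tau_0>0$. So $c=1/\rho$ works, and no large-$T$ analysis is needed.

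\textbf{The case $k<0$.} ``Continuity in $kT^2$ plus compactness'' does not by itself give a uniform constant. For a fixed parameter, the optimal $c$ is an infimum over the non-compact family of directions $\theta$. Such an infimum is a priori only upper semicontinuous in the parameter. Moreover, the radial extents of $V$ and $U_{1/\rho}$, measured in $\bar\tau$, both tend to $0$ as $|\theta|\to\infty$.

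Your first bullet is also not right as stated. $V$ does not stay away from the null directions; it contains vectors in every timelike direction and accumulates at the null cone.

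What must be shown is that the ratio of these radial extents stays bounded below as $\theta$ approaches the null directions. This is precisely the step the paper carries out, using the explicit AdS pregeodesics $\tilde\xi_a$. As $|a|\to1$ it shows $t_{a,1}\to t_{1,1}>0$, $t_{a,2}\to\pi/4$, and that the speed ratio tends to $1/2$.

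The paper also avoids your one-parameter family altogether. If $T<\pi/(2\sqrt{-k})$, choose $\bar k<k$ with $T=\pi/(2\sqrt{-\bar k})$. Then $\mathbb L^2(k)$ satisfies $\TCBA(\bar k)$. By scale invariance, the single extreme configuration gives a constant independent of $\bar k$, so one computation suffices.
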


\begin{proof}
     Let $X\in\mathcal X$ and let $I_1:=I(x,y_1)\subset I(x,y_2)=:I_2$ be chronological diamonds in $X$ such that $I_2$ is an aligned $\rho$-enlargement of $I_1$ and $\diam^\tau(I_2)\leq \Lambda$. Recall that this means that there exists a timelike geodesic $\gamma:[0,1]\to X$ such that $\gamma(0)=x,\ \gamma(1)=y_2$ and there exists a parameter $1\geq T_1\geq\rho^{-1}$ such that $\gamma(T_1)=y_1$.
     
     Let $\xi:[0,1]\to X$ be a timelike geodesic with $\xi(0)=x$ and $\xi(1)\in I_2$. We will consider the timelike triangle $\triangle x\xi(1)y_2$ where the side from $x$ to $y_2$ is realised by $\gamma$ and the side from $x$ to $\xi(1)$ is realised by $\xi$. The choice of geodesic going from $\xi(1)$ to $y_2$ does not matter. Choose a comparison triangle $\triangle \bar x\bar\xi(1)\bar y_2$ in the comparison space $\mathbb L^2(k)$. We claim that there exists a $c=c(k,\rho,\Lambda)$ such that $\mathbb L^2(k)$ satisfies the $(\rho,c,\Lambda)$-condition. If this is the case then $\bar\xi(c)\in I(\bar x,\bar y_1)=I(\bar x,\bar\gamma(T_1))$ since $T_1\geq \rho^{-1}$ and in particular $\bar\xi(c)\ll \bar y_1$. By \Cref{rem:curvaturebound}\ref{rem:curvaturebound2} the curvature bound now implies that $\xi(c)\ll y_1$ and in particular $\xi(c)\in I_1$, i.e.\ $X$ satisfies the $(\rho,c,\Lambda)$-condition as well. Finally, the claim holds by \Cref{lem:rhoclambda} below.
\end{proof}

\begin{lemma}\label{lem:rhoclambda}
    Let $k\in\RR, \rho>1$ and if $k<0$ let $0<\Lambda\leq\frac{\pi}{2\sqrt{-k}}$, otherwise set $\Lambda=\infty$. Then there exists a constant $c=c(k,\rho,\Lambda)$ such that $\mathbb L^2(k)$ satisfies the $(\rho,c,\Lambda)$-condition.
\end{lemma}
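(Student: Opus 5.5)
The plan is to reduce the statement to an explicit computation in the two-dimensional model space $\mathbb L^2(k)$. There the isometry group acts transitively on pairs $(x,v)$ with $v$ a future unit timelike vector at $x$. So we may fix $x$ and take the aligned geodesic $\gamma$ through $x$, $y_1=\gamma(T_1)$ and $y_2=\gamma(1)$ to be a fixed ``vertical'' geodesic. The data are then $s:=\tau(x,y_2)\in(0,\Lambda]$ and $T_1\in[\rho^{-1},1]$. We must find $c>0$, depending only on $k,\rho,\Lambda$, such that for every timelike geodesic $\xi$ from $x$ with $\xi(1)\in I(x,y_2)$ we have $\xi(c)\ll y_1$. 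Since $\xi(c)$ moves towards $x$ as $c$ decreases and $I(x,y_1)$ is open at $x$ in the future cone, some $c$ works for each configuration. The content of the lemma is that $c$ is uniform.

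\emph{Case $k=0$.} This case is a direct linear-algebra argument. Put $x=0$ and $v=\xi(1)$, so $v$ and $y_2-v$ are future timelike. For $c<T_1$ we can write
\[
T_1y_2-cv=(T_1-c)\,y_2+c\,(y_2-v).
\]
This is a sum of future timelike vectors, hence future timelike, so $\xi(c)=cv\ll y_1$. Thus $c=\frac{1}{2\rho}$ works. The key feature is that the cone structure is translation- and dilation-invariant, so $I(0,y_1)\supset T_1\,I(0,y_2)$.

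\emph{Case $k\neq 0$.} I would transfer the Minkowski argument through the projective (gnomonic/Beltrami--Klein type) chart of $\mathbb L^2(k)$ centred at $x$. This chart is obtained from the embedding in $\mathbb R^3_1$ or $\mathbb R^3_2$ of \Cref{def:model-spaces} by central projection onto the tangent plane at $x$. In it, all geodesics, in particular null geodesics, are straight line segments, and geodesics through $x$ are radial. A point at $\tau$-distance $t$ from $x$ along a radial timelike ray sits at chart-radius $\phi_k(t)=\mathfrak s_{-k}(t)/\mathfrak c_{-k}(t)$ (a $\tan$ or $\tanh$-type function), measured in the Minkowski norm of the tangent plane. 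The condition $\Lambda\le \pi/(2\sqrt{-k})$ for $k<0$ is exactly what keeps the diamonds $J(x,y_2)$ inside the chart, with $\phi_k$ finite and bi-Lipschitz on $[0,\Lambda]$. For $k>0$ one has to check separately that the relevant diamonds still lie in the chart domain; this should follow from the diamond being a compact subset of the future of $x$ within timelike distance $s<\infty$.

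In the chart, the boundary of $I(x,y_i)$ consists of the two null lines through $x$ together with two null lines through $\hat y_i$. The latter are straight but, unlike in Minkowski space, their directions depend on the base point. The main step, and what I expect to be the real obstacle, is to prove a curved replacement for $I(0,y_1)\supset T_1 I(0,y_2)$. Concretely, I want a constant $\lambda=\lambda(k,\rho,\Lambda)>0$ such that the chart-homothety $h_\lambda$ centred at $x$ satisfies $h_\lambda(\hat I(x,y_2))\subset \hat I(x,y_1)$ for all $s\le\Lambda$ and $T_1\ge\rho^{-1}$.

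I would attempt this by writing the future null boundary of $\hat I(x,y)$ explicitly as a function of the chart position of $y$. By the $1$-dimensional symmetry (reflection across $\gamma$), it suffices to check a single boundary null line. The aim is to show, by a compactness argument over the compact parameter set $\{(s,T_1): s\in[0,\Lambda],\,T_1\in[\rho^{-1},1]\}$, that the required $\lambda$ is bounded below. Near $s=0$ one blows up: after rescaling by $1/s$ the picture converges to the Minkowski one, where $\lambda=T_1\ge\rho^{-1}$ works. Having $\lambda$, the point $\xi(c)$ has chart-radius $\phi_k(c\,\tau(x,\xi(1)))$. Using bi-Lipschitz bounds of $\phi_k$ on $[0,\Lambda]$, this is at most $\lambda\,\phi_k(\tau(x,\xi(1)))$ once $c\le \lambda\cdot \min\phi_k'/\max\phi_k'$. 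Hence $\hat\xi(c)\in h_\lambda(\hat I(x,y_2))\subset\hat I(x,y_1)$, which gives the lemma with this $c=c(k,\rho,\Lambda)$.
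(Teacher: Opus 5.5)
Your $k=0$ argument is correct and is essentially the paper's intercept-theorem argument. For $k\neq 0$, however, the chart-plus-homothety scheme breaks down at exactly the two ends of the parameter range the lemma requires.

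\emph{The case $k>0$.} Here the lemma sets $\Lambda=\infty$, so $c$ must be uniform over all $s=\tau(x,y_2)\in(0,\infty)$. Your parameter set $[0,\Lambda]\times[\rho^{-1},1]$ is then not compact, and your final step fails. Indeed, $\phi_k(t)=\tanh(\sqrt{k}\,t)/\sqrt{k}$ has $\phi_k'(t)\to0$, so $\min\phi_k'/\max\phi_k'=0$ on $[0,\infty)$. In fact $\phi_k(ct)/\phi_k(t)\to1$ as $t\to\infty$, so $\phi_k(ct)\le\lambda\,\phi_k(t)$ fails for long $\xi$ whenever $\lambda<1$. A uniform $\lambda$ is necessarily at most $\rho^{-1}$, because small diamonds with $T_1=\rho^{-1}$ look Minkowskian. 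Nothing in the proposal controls the regime $s\to\infty$.

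The paper sidesteps this entirely: $\mathbb L^2(k)$ with $k>0$ satisfies $\TCBA(0)$ \cite[Lemma~3.27]{beran-saemann2023}. So the comparison argument of \Cref{prop:2} transfers your Minkowski statement to $\mathbb L^2(k)$ with $c=\rho^{-1}$, for diamonds of every size. One could also repair this inside your chart, by keeping the $s$-dependent homothety factor and using $\tau(x,\xi(1))<s$ together with monotonicity of $t\mapsto\phi_k(ct)/\phi_k(t)$. That is not the argument you give, though.

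\emph{The case $k<0$.} The statement allows $\Lambda=\frac{\pi}{2\sqrt{-k}}$, and your assertion that this keeps $J(x,y_2)$ inside the chart with $\phi_k$ finite and bi-Lipschitz on $[0,\Lambda]$ is false at equality. The function $\phi_k(t)=\tan(\sqrt{-k}\,t)/\sqrt{-k}$ blows up at $t=\frac{\pi}{2\sqrt{-k}}$, so $y_2$ is sent to infinity. The chart image of $I(x,y_2)$ becomes an unbounded region between the null lines through $x$ and two lines parallel to $\gamma$. Meanwhile the image of $I(x,y_1)$ stays bounded for $T_1<1$. No homothety with $\lambda>0$ maps the former into the latter, so $\lambda(s,T_1)\to0$ as $s\to\frac{\pi}{2\sqrt{-k}}$, and your compactness argument collapses there.

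This endpoint size is exactly the one the paper treats by hand. It uses the pregeodesics $\tilde\xi_a$ in $\mathbb R^3_2$ and shows that the ratio of the lengths of $\xi$ spent in $I_1$ and in $I_2$ stays bounded below as $\xi$ becomes null. Smaller diamonds are then reduced to this case by passing to $\mathbb L^2(\bar k)$ with $\bar k<k$. The lemma does hold at the endpoint, but proving it needs a direct estimate exploiting that $\xi(1)\in I(x,y_2)$ forces nearly null $\xi$ to be short, not a homothety.

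For $\Lambda<\frac{\pi}{2\sqrt{-k}}$ strictly, your route does work and gives a genuinely different, more geometric proof. Let $e_0$ be the unit direction of $\gamma$ and $e_1$ a unit spacelike vector orthogonal to it. Then the chart image of $I(x,y)$ with $\tau(x,y)=s$ is the kite with vertices $0$, $\phi_k(s)e_0$ and $\phi_k(s/2)(e_0\pm e_1)$. So you can take $\lambda=\min\{\phi_k(T_1s)/\phi_k(s),\,\phi_k(T_1s/2)/\phi_k(s/2)\}$, which is bounded below on $(0,\Lambda]$. Convexity of $\tan$ gives $\phi_k(ct)\le c\,\phi_k(t)$, so $c=\lambda$ works. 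This constant degenerates as $\Lambda\to\frac{\pi}{2\sqrt{-k}}$, which is why the endpoint needs separate treatment.
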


\begin{proof}
     We use the same setup as in the proof of \Cref{prop:2} above. Take $I_1:=I(x,y_1)\subset I(x,y_2)=:I_2$ in $\mathbb L^2(k)$ such that $I_2$ is an aligned $\rho$-enlargement of $I_1$ and $\diam^\tau(I_2)\leq \Lambda$. Let $\gamma:[0,1]\to X$ be the timelike geodesic such that $\gamma(0)=x,\ \gamma(1)=y_2$ and let $T_1$ be the parameter such that $\gamma(T_1)=y_1$. We need to show that for a timelike geodesic $\xi:[0,1]\to X$ with $\xi(0)=x$ and $\xi(1)\in I_2$, there exists a $c=c(k,\rho,\Lambda)$ such that $\xi(c)\in I_1$.
     
     First assume that $k=0$, i.e. we are considering flat, 2-dimensional Minkowski space. In this case, $c=\rho^{-1}$ works. Indeed, by the intercept theorem we know that $\xi(\rho^{-1})\ll\gamma(\rho^{-1})$. Since $\rho^{-1}\leq T_1$, we get $\xi(\rho^{-1})\ll\gamma(T_1)= y_1$, i.e., it satisfies the $(\rho,\rho^{-1},\infty)$-condition. If $k>0$ then, by \cite[Lemma 3.27]{beran-saemann2023}, $\mathbb{L}^2(k)$ satisfies the $\TCBA(0)$-condition, so the case $k=0$ combined with \Cref{prop:2} yields the result. 

    It remains to consider the case $k<0$. Here we have to make the additional assumption that $I_2$ satisfies $\diam^\tau(I_2)\leq\Lambda\leq\frac{\pi}{2\sqrt{-k}}$. Recall that $\mathbb L^2(k)$ can be seen as the manifold parametrized by \eqref{eq:AdS}, where the radius $r$ is equal to $1/\sqrt{-k}$. Without loss of generality we can assume that the curve $\gamma$ is given by 
    $$\gamma:[0,1]\to \mathbb L^2(k),\ \gamma(t)=\left(r \sin\left(\frac{\pi}{2}t\right),r \cos\left(\frac{\pi}{2}t\right),0\right),$$
    so in particular we assume that $\diam^\tau(I_2)=\Lambda=\frac{\pi}{2\sqrt{-k}}$. We will see later how the case $\diam^\tau(I_2)<\frac{\pi}{2\sqrt{-k}}$ follows from this. We will also assume that $T_1=\rho^{-1}$ with the case $T_1>\rho^{-1}$ trivially following from the former. 
    Denote by $t_1$ the parameter at which $\xi$ leaves $I_1$, then our goal is to find a constant $c\in(0,1)$, independent of the choice of $\xi$, such that 
    \begin{equation*}
        \frac{L(\xi\big|_{[0,t_1]})}{L(\xi)}> c.
    \end{equation*}
    In order to do this we need to make precise computations. Recall that the geodesic $\xi$ is given by affinely parameterizing the intersection of $\mathbb L^2(k)\subset\mathbb R^3_2$ with the linear subspaces spanned by the vectors $x=(0,r,0)$ and $\xi'(0)$. These initial velocities are multiples of the vectors $(1,0,a)$ where $a\in(-1,1)$. The case $a=0$ corresponds to the geodesic $\gamma$, while $|a|=1$ would give a null curve. Setting the parametrization of anti-de Sitter equal to this linear space and solving for $t$ gives the curve 
    \begin{equation}\label{eq:AdSgeod}
    \tilde\xi_a(t)=\frac{r}{\sqrt{1 - a^2 \sin(t)^2}}(\cos(t),\sin(t),a \sin(t)).
    \end{equation}
    Note that this is only a pregeodesic, i.e., the reparametrization of a geodesic, which does not necessarily have constant speed. Indeed the the value of $\vert\vert\tilde\xi_a'(t)\vert\vert$ is not constant but can be calculated to be
    \begin{equation}\label{eq:AdSnorm}
    \vert\vert\tilde\xi_a'(t)\vert\vert=\frac{r\sqrt{1-a^2}}{1-a^2\sin^2(t)}.
    \end{equation}
    Calling the times at which $\tilde\xi_a$ leaves $I_1$ (respectively $I_2$) $t_{a,1}$ (respectively $t_{a,2}$) our goal is to find a constant $c$ such that for all $a\in(-1,1)$ we have
    $$\frac{L(\tilde\xi_a\big|_{[0,t_{a,1}]})}{L(\tilde\xi_a\big|_{[0,t_{a,2}]})}>c.$$
    As a first step towards this we compute the values for $t_{a,i}$ by finding the intersection points of $\tilde\xi_a$ and the past directed null geodesics starting at $y_i$. It is easy to see that such a past directed null geodesic starting at $x$ is given by 
    $$t\mapsto (-t,r,t).$$
    To find such a geodesic starting at $y_i$ we just need to rotate this line by the appropriate angle $\theta_i$, where $\theta_1=\pi/4$ and $\theta_2=\pi/2$. This yields 
    $$\begin{pmatrix}\cos(\theta_i) & \sin(\theta_i) & 0 \\ -\sin(\theta_i) & \cos(\theta_i) & 0 \\ 0 & 0 & 1 \end{pmatrix}
    \cdot
    \begin{pmatrix}-t\\r\\t\end{pmatrix}
    =t\ 
    \begin{pmatrix}-\cos(\theta_i)\\\sin(\theta_i)\\1\end{pmatrix}
    +
    \begin{pmatrix}r\ \sin(\theta_i)\\ r\ \cos(\theta_i)\\0\end{pmatrix}
    =:\eta_i(t),$$
    with $i=1,2$. 
    Now $t_{a,i}$ is the parameter at which $\tilde\xi_a$ intersects $\eta_i$. For $i=2$ this can easily be computed to be
    \begin{align*}
        t_{a,2}=\arccos\left(\frac{|a|}{\sqrt{1 + a^2}}\right).
    \end{align*}
    While a precise value for $t_{a,1}$ is not as straightforward to compute it is still easy to see that on $(0,t_{a,1})$ we can estimate 
    $$\vert\vert\tilde\xi_a'(t)\vert\vert>r\sqrt{1-a^2}=:c_{a,1}.$$
    Meanwhile on $(0,t_{a,2} )$ we can estimate 
    $$\vert\vert\tilde\xi_a'(t)\vert\vert<\frac{r\sqrt{1-a^2}}{1-a^2\sin^2(t_{a,2})}=:c_{a,2}.$$
    Now we can make the following computation:
    $$\frac{L(\tilde\xi_a\big|_{[0,t_{a,1}]})}{L(\tilde\xi_a\big|_{[0,t_{a,2}]})}=\frac{\int_0^{t_{a,1}}\vert\vert\tilde\xi'_a(t)\vert\vert\ dt}{\int_0^{t_{a,2}}\vert\vert\tilde\xi'_a(t)\vert\vert\ dt}>\frac{\int_0^{t_{a,1}}c_{a,1}\ dt}{\int_0^{t_{a,2}}c_{a,2}\ dt}=\frac{c_{a,1}}{c_{a,2}}\cdot\frac{t_{a,1}}{t_{a,2}}.$$
    First we want to show that $a\mapsto\frac{t_{a,1}}{t_{a,2}}$ is bounded away from $0$. Let $a_0<1$, then this is clearly the case on the compact interval $[-a_0,a_0]$ hence it is only the behavior as $|a|\to1$ which might cause problems. Note that $t_{a,2}\to\arccos(1/\sqrt{2})=\pi/4$ as $|a|\to 1$ so in fact we only need to show that $t_{a,1}$ is bounded away from $0$ as $|a|\to1$. To this end observe that for $\varepsilon>0$ small enough, on the interval $[0,\pi/2-\varepsilon]$ the curve $\tilde\xi_a$ converges uniformly to the curve
    $t\mapsto(r\, \tan(t),r,r\, \tan(t))$
    as $a\nearrow 1$. This curve intersects $\eta_1$ at parameter $t_{1,1}:=-\arctan(\cot(T_1) - \csc(T_1))>0$ so, since the time of intersection between two curves continuously depending on a parameter is clearly a continuous function, we get $t_{a,1}\to t_{1,1}>0$ as $a\nearrow1$ and in particular $t_{a,1}$ is bounded away from $0$. The case $a\searrow-1$ works analogously. 
    
    Similarly we claim that $a\mapsto\frac{c_{a,1}}{c_{a,2}}$ is bounded away from $0$. Again it suffices to check the behavior as $|a|\to1$. The denominator in the definition of $c_{a,2}$ behaves as follows as $|a|\to1$:
    $$1-a^2\sin^2(t_{a,2})\to1-\sin^2(\pi/4)=1/2.$$
    Consequently we have 
    $$\lim_{|a|\to1}\frac{c_{a,1}}{c_{a,2}}= \lim_{|a|\to1}\frac{r\sqrt{1-a^2}}{2r\sqrt{1-a^2}}=\frac{1}{2}.$$
    In particular this means that there is a constant $c$ independent of $a$ (and also independent of the radius $r$, which will be important in the next paragraph) such that
    \begin{equation}\label{ineqc}
    \frac{L(\tilde\xi_a\big|_{[0,t_{a,1}]})}{L(\tilde\xi_a\big|_{[0,t_{a,2}]})}>\frac{c_{a,1}}{c_{a,2}}\cdot\frac{t_{a,1}}{t_{a,2}}\geq c,
    \end{equation}
    which is what we wanted to show. 

    It remains to prove the statement for $\diam^\tau(I_2)<\frac{\pi}{2\sqrt{-k}}$. 
    There exists a $\bar k<k$ such that $\diam^\tau(I_2)=\frac{\pi}{2\sqrt{-\bar k}}$, so in particular we can find timelike geodesics $\bar\gamma,\bar\xi:[0,1]\to\mathbb L^2(\bar k)$, starting at the same point, such that $\triangle\bar\gamma(0)\bar\xi(1)\bar\gamma(1)$ is a comparison triangle for $\triangle\gamma(0)\xi(1)\gamma(1)$.
    By \eqref{ineqc} we know that $\bar\xi$ leaves the diamond $\bar I_1=I(\bar\gamma(0),\bar\gamma(T_1))$ after the parameter $c$ i.e. $\bar\xi(c)\ll\bar\gamma(T_1)$. Here we are using that $c$ does not actually depend on $k$ as long as $k$ is negative. By \Cref{rem:curvaturebound}\ref{rem:curvaturebound2} the curvature bound now implies $\xi(c)\ll\gamma(T_1)$ and in particular $\xi(c)\in I_1$, which is what we wanted to show.
\end{proof}






We now observe that if $k=0$ one can explicitly write down a sharp constant $C$ in \Cref{thm:main1} for aligned chronological diamonds.


\begin{lemma}\label{lem:sharpconstant}
Let $X$ be a globally hyperbolic, timelike non-branching, regular measured Lorentzian length space. Assume that $X$ and its causally reversed  structure satisfy the $\TMCP^e(K,N)$-condition, and that $X$ satisfies the global $\TCBA(0)$-condition. 
Let $I_1 = I(x_1,y_1)$ and $I_2 = I(x_2,y_2)$ be such that $x_2\ll x_1\ll y_1\ll y_2$ and $x_1,x_2,y_1,y_2$ lie on a timelike geodesic. Then
\[
\frac{\mfm(I_2)}{\mfm(I_1)}\le \begin{cases}
\left(\frac{T_2}{T_1}\right)^N & \text{if }\ K\geq 0,\\
\left(\frac{\mfs_{K/(N-1)}(T_2)}{\mfs_{K/(N-1)}(T_1)}\right)^{N} & \text{if }\ K<0
\end{cases}
\]
where $T_i=\tau(x_i,y_i)$.
\end{lemma}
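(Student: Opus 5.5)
The plan is to split the enlargement $I_1\subset I_2$ into two aligned enlargements through the intermediate diamond $I' := I(x_1,y_2)$. Each step is estimated with the needle decomposition exactly as in the proof of \Cref{prop:1}, but with the sharp contraction constant coming from the Minkowski comparison of \Cref{lem:rhoclambda}. Set $S:=\tau(x_1,y_2)$. Since $x_2,x_1,y_1,y_2$ lie on one timelike geodesic, $T_1\le S\le T_2$. The goal is to prove the two bounds
\[
\frac{\mfm(I')}{\mfm(I_1)}\le \Phi(S,T_1), \qquad \frac{\mfm(I_2)}{\mfm(I')}\le \Phi(T_2,S),
\]
where $\Phi(b,a)=(b/a)^N$ if $K\ge 0$ and $\Phi(b,a)=(\mfs_{K/(N-1)}(b)/\mfs_{K/(N-1)}(a))^N$ if $K<0$. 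The two factors telescope to the claimed bound. The first step is an aligned enlargement with common past vertex $x_1$. The second has common future vertex $y_2$, and I would handle it by the same argument applied to the causally reversed structure, which satisfies $\TMCP^e(K,N)$ by assumption and still satisfies $\TCBA(0)$, since the comparison condition is symmetric under time reversal.

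\emph{Step with common vertex $x_1$.} The first task is to get a sharp $(\rho,c)$-condition with $c = T_1/S$. Let $\xi\colon[0,1]\to X$ be a timelike geodesic from $x_1$ with $\xi(1)\in I'$. Form the triangle $\triangle x_1\xi(1)y_2$ and compare it in $\mathbb L^2(0)$. By the intercept theorem, as in the $k=0$ case of \Cref{lem:rhoclambda}, $\bar\xi(s)\ll\bar\gamma(s)$ for every $s$. Taking $s=T_1/S$ gives $\bar\xi(T_1/S)\ll \bar y_1$ (more precisely for every $s<T_1/S$, with a limit afterwards). \Cref{rem:curvaturebound}\ref{rem:curvaturebound2} then transfers this to $X$, so $\xi(s)\in I_1$ for $s<T_1/S$. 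Next apply \Cref{thm:needle-decomposition} at $x_1$ together with \Cref{lem:disintegration}. As in the proof of \Cref{prop:1}, this yields $t_{\alpha,1}\ge c\, t_{\alpha,2}$ with $t_{\alpha,2}\le S$, and hence
\[
\frac{\mfm_\alpha(X_\alpha\cap I')}{\mfm_\alpha(X_\alpha\cap I_1)}\le F(t_{\alpha,2}),\qquad F(t):=\frac{\int_0^{t}\mfs^{N-1}}{\int_0^{ct}\mfs^{N-1}},
\]
where $\mfs=\mfs_{K/(N-1)}$. For $K\ge0$ I would simply use that $\MCP(K,N)$ implies $\MCP(0,N)$ on the needles. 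Then $\mfs(t)=t$, $F\equiv c^{-N}=(S/T_1)^N$, and integrating over $\mfq$ gives the first step.

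\emph{The $K<0$ computation, which is the main obstacle.} Here $F$ is not constant, and I must bound it by $(\mfs(S)/\mfs(cS))^N=(\mfs(S)/\mfs(T_1))^N$. My plan has two parts. First, I would show that $F$ is nondecreasing in $t$; this is the standard Bishop--Gromov monotonicity of ratios of integrals of $\mfs^{N-1}$, and it gives $F(t_{\alpha,2})\le F(S)$. Second, I would introduce
\[
\varphi(t)=\frac{\int_0^t \mfs^{N-1}}{\mfs(t)^N}
\]
and show that $\varphi$ is nonincreasing. The sign of $\varphi'$ is the sign of $\mfs(t)^N - N\,\mfc(t)\int_0^t\mfs^{N-1}$. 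Since $\mfc=\mfs'$ is increasing for $K<0$, we have $\mfs(t)^N/N=\int_0^t\mfs^{N-1}\mfc\le \mfc(t)\int_0^t \mfs^{N-1}$, so $\varphi'\le 0$. Consequently
\[
F(S)=\frac{\varphi(S)\,\mfs(S)^N}{\varphi(cS)\,\mfs(cS)^N}\le\Big(\frac{\mfs(S)}{\mfs(T_1)}\Big)^N.
\]
Integrating over $Q$ then finishes the first step, and the reversed step is identical with $T_2,S$ in place of $S,T_1$. A remaining routine check is that the degenerate cases $x_1=x_2$ or $y_1=y_2$ are covered, where one of the factors equals $1$.
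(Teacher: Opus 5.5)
Your proposal is correct and follows essentially the paper's route. You pass through the intermediate diamond $I(x_1,y_2)$, get the sharp contraction $c=T_1/S$ from the Minkowski intercept theorem transferred by the $\TCBA(0)$-condition, apply needle-wise Bishop--Gromov as in \Cref{prop:1}, and repeat the argument via the causally reversed structure for the step with common vertex $y_2$. The only difference is in the $K<0$ calculus. You bound each factor directly by $(\mfs(S)/\mfs(T_1))^N$ using monotonicity of $F$ and of $\varphi$; the monotonicity of $F$ rests on the same fact the paper proves, namely that $t\mapsto\mfs(t)/\mfs(ct)$ is nondecreasing. The paper instead obtains the slightly sharper per-step bound $\frac{S}{T_1}(\mfs(S)/\mfs(T_1))^{N-1}$ by a change of variables, and only at the end uses $T_2/T_1\le\mfs(T_2)/\mfs(T_1)$.
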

\begin{proof}
By the proof of \Cref{prop:2}, we know that $X$ satisfies the $(\rho,\rho^{-1},\Lambda)$-condition for any $\rho>1$ and $\Lambda \in (0,\infty]$.

Let $\mfq$, $Q$, $X_\alpha$, $\gamma_\alpha$, and $d_\alpha$ given by \Cref{thm:needle-decomposition} applied to $\mfm\llcorner_{I^+(x_1)}$. By \Cref{lem:disintegration}, for $\mfq$-a.e.\ $\alpha\in Q$ there exist $0\leq t_{\alpha,1}\leq t_{\alpha,2}\leq d_\alpha$ such that $\gamma_\alpha([0,t_{\alpha,i}])=\overline{X}_\alpha\cap J(x_1,y_i)$, $i=1,2$. Then, if $\triangle \bar{x}_1\overline{\gamma_\alpha(t_{\alpha,2})}\bar{y}_2$ is a comparison triangle for $\triangle x_1\gamma_\alpha(t_{\alpha,2})y_2$ in the Minkowski plane and $\overline{\gamma_\alpha(t_{\alpha,1})}\in [\bar{x}_1\overline{\gamma_\alpha(t_{\alpha,2})}]$ is a comparison point for $\gamma_\alpha(t_{\alpha,1})$, the intercept theorem implies 
\[
\frac{t_{\alpha,2}}{t_{\alpha,1}}=\frac{\tau(\bar{x}_1,\overline{\gamma_\alpha(t_{\alpha,2})})}{\tau(\bar{x}_1,\overline{\gamma_\alpha(t_{\alpha,1})})}\leq \frac{\tau(\bar{x}_1,\bar{y}_2)}{\tau(\bar{x}_1,\bar{y}_1)} = \frac{\tau(x_1,y_2)}{\tau(x_1,y_1)}.
\]
Hence we know that $X$ satisfies the $(\rho,\rho^{-1},\Lambda)$-condition with $\rho = \tau(x_1,y_2)/\tau(x_1,y_1)$, $\Lambda = \tau(x_1,y_2)$.

Let us first consider the case $K<0$ and let $F_{K,N,\rho^{-1}}(\alpha)$ be defined as in \eqref{equation1}. By a change of variables,
\begin{equation}\label{eq:change of variables F}
F_{K,N,\rho^{-1}}(\alpha) = \frac{\int_{0}^{t_{\alpha,2}}\mfs_{K/(N-1)}(t)^{N-1} dt}{\int_{0}^{\rho^{-1}t_{\alpha,2}}\mfs_{K/(N-1)}(t)^{N-1} dt}= \rho\cdot \frac{\int_{0}^{t_{\alpha,2}}\mfs_{K/(N-1)}(t)^{N-1} dt}{\int_{0}^{t_{\alpha,2}}\mfs_{K/(N-1)}(\rho^{-1}t)^{N-1} dt}.
\end{equation}
On the other hand, if 
$f(t) = \frac{\mfs_{K/(N-1)}(t)}{\mfs_{K/(N-1)}(\rho^{-1}t)}$, then 
\[
f'(t) = \frac{\mfc_{K/(N-1)}(t)\mfs_{K/(N-1)}(\rho^{-1}t) - \rho^{-1}\mfc_{K/(N-1)}(\rho^{-1}t)\mfs_{K/(N-1)}(t)}{\mfs_{K/(N-1)}(\rho^{-1}t)^2},
\]
which one can easily check is non-negative since the map $t\mapsto \tanh\left(\sqrt{\frac{-K}{N-1}}t\right)$ is concave. In particular, since 
$\rho=\tau(x_1,y_2)/\tau(x_1,y_1)$ and 
$t_{\alpha,2}\leq \tau(x_1,y_2)$ for $\mfq$-a.e. $\alpha\in Q$, it follows that
\begin{equation*}
\frac{\mfs_{K/(N-1)}(t)}{\mfs_{K/(N-1)}(\rho^{-1}t)} \leq \frac{\mfs_{K/(N-1)}(\tau(x_1,y_2))}{\mfs_{K/(N-1)}(\tau(x_1,y_1))}
\end{equation*}
for $\mfq$-a.e. $\alpha\in Q$ and $t\in [0,t_{\alpha,2}]$. Therefore,
\begin{equation}\label{eq:bound for quotient of generalized sin}
\mfs_{K/(N-1)}(t)^{N-1} \leq \left(\frac{\mfs_{K/(N-1)}(\tau(x_1,y_2))}{\mfs_{K/(N-1)}(\tau(x_1,y_1))}\right)^{N-1}\mfs_{K/(N-1)}(\rho^{-1}t)^{N-1}
\end{equation}
for $\mfq$-a.e. $\alpha\in Q$ and $t\in [0,t_{\alpha,2}]$. For $\alpha\in Q$ satisfying \eqref{eq:bound for quotient of generalized sin}, integrating over $[0,t_{\alpha,2}]$ and plugging into \eqref{eq:change of variables F} yields 
\[
F_{K,N,\rho^{-1}}(\alpha) \leq 
\rho\cdot\left(\frac{\mfs_{K/(N-1)}(\tau(x_1,y_2))}{\mfs_{K/(N-1)}(\tau(x_1,y_1))}\right)^{N-1} = \frac{\tau(x_1,y_2)}{\tau(x_1,y_1)}\cdot\left(\frac{\mfs_{K/(N-1)}(T_2)}{\mfs_{K/(N-1)}(T_1)}\right)^{N-1}.
\]
An argument analogous to the proof of \Cref{lem:disintegration} implies that
\begin{equation}\label{eq:case k = 0,K<0}
\frac{\mfm(I(x_1,y_2))}{\mfm(I(x_1,y_1))} \leq \frac{\tau(x_1,y_2)}{\tau(x_1,y_1)}\cdot\left(\frac{\mfs_{K/(N-1)}(T_2)}{\mfs_{K/(N-1)}(T_1)}\right)^{N-1}.
\end{equation}
The conclusion follows after repeating this argument with the causal diamonds $I(x_1,y_2)$ and $I(x_2,y_2)$ to obtain an upper bound for $\frac{\mfm(I(x_2,y_2))}{\mfm(I(x_1,y_2))}$; multiplying the resulting inequality with \eqref{eq:case k = 0,K<0}; and finally using the monotonicity of the map $t\mapsto \frac{\mfs_{K/(N-1)}(t)}{t}$.

Finally, if $K\geq 0$, then the $\TMCP^e(K,N)$-condition implies the $\TMCP^e(0,N)$-condition, and it is straightforward to verify that 
\[
F_{0,N,\rho^{-1}}(\alpha) =\rho^N = \left(\frac{\tau(x_1,y_2)}{\tau(x_1,y_1)}\right)^N.
\]
Following the proof of \Cref{lem:disintegration}, we obtain
\[
\frac{\mfm(I(x_1,y_2))}{\mfm(I(x_1,y_1))} \leq \left(\frac{\tau(x_1,x_2)}{\tau(x_1,y_1)}\right)^N,\quad
\frac{\mfm(I(x_2,y_2))}{\mfm(I(x_1,y_2))} \leq \left(\frac{\tau(x_2,x_2)}{\tau(x_1,y_2)}\right)^N,
\]
and multiplying these inequalities we obtain the desired result.
\end{proof}

\Cref{prop:1} and \Cref{prop:2} together imply that a globally hyperbolic, timelike non-branching, regular measured Lorentzian length space, which satisfies both the $\TMCP^e(K,N)$-condition and a global sectional curvature bound from above, has the aligned $(\rho,\Lambda)$-enlargement property. Now we get rid of the assumption that all diamonds must be aligned and arrive at a more precise version of \Cref{thm:main1}.


\begin{theorem}\label{thm:generaldoubling}
    Let $\mathcal X$ be a class of globally hyperbolic, timelike non-branching, regular measured Lorentzian length spaces $X$ such that $X$ and its causally reversed structure satisfy the $\TMCP^e(K,N)$-condition. Further suppose that, for some $k\in\mathbb R$, all $X\in\mathcal X$ satisfy the global $\TCBA(k)$-condition. If $k<0$ fix any constant $\Lambda\leq\frac{\pi}{2\sqrt{-k}}$ and if $K<0$ fix any constant $\Lambda<\infty$, otherwise set $\Lambda=\infty$.
    
    Then for all $\rho>1$ there exists $C=C(K,N,k,\rho,\Lambda)>0$ such that all $X\in\mathcal X$ have the $(\rho,\Lambda)$-enlargement property with constant $C$.
\end{theorem}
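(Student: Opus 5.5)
The plan is to reduce a general $\rho$-enlargement $I_1=I(x_1,y_1)\subset I_2=I(x_2,y_2)$ to a bounded chain of aligned enlargements. Each link of the chain is then controlled by \Cref{prop:1} combined with \Cref{prop:2}, which together give the aligned $(\rho',\Lambda)$-enlargement property for every $\rho'>1$ with a constant depending only on $(K,N,k,\rho',\Lambda)$. Set $T_1=\tau(x_1,y_1)$ and $T_2=\tau(x_2,y_2)\le\rho T_1$. Since $I_1\subset I_2$ we have $x_2\le x_1\ll y_1\le y_2$; I expect this to follow from closedness of causal relations in globally hyperbolic spaces, by approximating $x_1$ from the future inside $I_1$.

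The key idea is to pass through a diamond that is aligned with both $I_1$ and $I_2$ and is contained in $I_1$. Let $m$ be the $\tau$-midpoint of a geodesic $\sigma$ from $x_1$ to $y_1$. Small diamonds around $m$ are not aligned with $I_2$, so I would instead use a diamond sharing a vertex with each chain. Concretely, I would use the three-step chain
\[
I(m,y_1)\subset I_1,\qquad I(m,y_1)\subset I(x_2,y_1)\subset I(x_2,y_2).
\]
The first step is handled as follows: $I(m,y_1)\subset I(x_1,y_1)$ is an aligned (same future vertex $y_1$, both on $\sigma$) $2$-enlargement, so it gives $\mfm(I_1)\ge C_1^{-1}\mfm(I(m,y_1))$. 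This is the wrong direction for what we need, so we do not use it; we only need the trivial inclusion $\mfm(I(m,y_1))\le\mfm(I_1)$. The other steps must go from small to large. The chain $I(m,y_1)\subset I(x_2,y_1)$ is aligned only if $x_2,m,y_1$ lie on one geodesic, which in general they do not. The fix is to choose the intermediate point differently. Take a geodesic $\beta$ from $x_2$ to $y_1$ and let $m'$ be a point on $\beta$ with $m'\in I_1$, for example a point shortly before $y_1$ on $\beta$. Then $I(m',y_1)\subset I(x_2,y_1)$ is aligned, with ratio $\tau(x_2,y_1)/\tau(m',y_1)$. Next, take a geodesic $\delta$ from $x_2$ to $y_2$ and compare $I(x_2,y_1)$ with $I(x_2,y_2)$; again the alignment problem appears, and I would handle it with a point $y'$ on $\delta$.

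\textbf{Main obstacle:} guaranteeing that the intermediate diamonds have $\tau$-diameter bounded below by a fixed multiple of $T_1$ while still being contained in $I_1$. A point $m'\in\beta\cap I_1$ with $\tau(m',y_1)\ge cT_1$ is needed. I would obtain it from the global $\TCBA(k)$-condition: compare the triangle $\triangle x_2 x_1 y_1$, which is admissible because $x_2\le x_1\ll y_1$, with a triangle in $\mathbb L^2(k)$. Choose $m'$ on $\beta$ so that the comparison point $\bar m'$ lies in $I(\bar x_1,\bar y_1)$ at definite time separation. By \Cref{rem:curvaturebound}\ref{rem:curvaturebound2}, $m'\in I_1$. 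The size of $\tau(m',y_1)$ is computed in the model space with sides at most $\Lambda\le D_k/2$. I expect this to need an explicit comparison estimate analogous to \Cref{lem:rhoclambda}, where $\Lambda\le\pi/(2\sqrt{-k})$ keeps the model configuration nondegenerate.

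Given such points, every step of the chain
\[
I(m',y_1)\subset I(x_2,y_1)\subset I(x_2,y_2)
\]
is an aligned $\rho''$-enlargement, with $\rho''$ depending only on $\rho,k,\Lambda$ and diameters at most $\Lambda$. The second step goes between $I(x_2,y_1)$ and $I(x_2,y_2)$. Using $\delta$, I would first pass through $I(x_2,y')$ with $y'\in\delta\cap J^-(y_1)$ chosen by the same comparison argument, using the past-directed version of \Cref{prop:2}, which holds for the causally reversed structure. Multiplying the constants gives
\[
\mfm(I_2)\le C\,\mfm(I(m',y_1))\le C\,\mfm(I_1).
\]
When $K<0$, the finiteness of $\Lambda$ is used inside \Cref{prop:1}.
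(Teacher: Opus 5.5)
Your argument is correct and is essentially the paper's proof. Like the paper, you reduce to two single-shared-vertex enlargements through an intermediate diamond. In each step you use the $\TCBA(k)$-comparison to find an aligned sub-diamond of the smaller diamond along the geodesic of the larger one, so that \Cref{prop:1} and \Cref{prop:2} apply with a ratio depending only on $k,\rho,\Lambda$.

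There are two minor differences. You route through $I(x_2,y_1)$ instead of the paper's $I(x_1,y_2)$, which is a time-reversed mirror image; note that it is the step sharing $y_1$ (not the $\delta$-step) that needs the causally reversed versions of those propositions. Also, the model-space estimate you leave open is exactly \Cref{lem:generaldoubling} (e.g.\ $T_0=\rho^{-2}$ for $k=0$), rather than an analogue of \Cref{lem:rhoclambda}.
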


\begin{proof}
    Pick some $X\in\mathcal X$, then we perform all calculations in $X$ noting that all of the constants we obtain only depend on $K,N,k,\rho$ and $\Lambda$, but not on the choice of space. We will prove the Theorem in two steps. First we will find a constant $C$ such that if $\diam^\tau (I_2)\leq\Lambda$ and $I_2=I(x_2,y_2)$ is a $\rho$-enlargement of $I_1=I(x_1,y_1)$ with one coinciding endpoint (i.e. either $x_1=x_2$ or $y_1=y_2$) then $\mfm(I_2)\leq C\, \mfm(I_1)$. Without loss of generality we assume $x_1=x_2=:x$. We define the timelike geodesic $\gamma:[0,1]\to X$ as the unique distance realiser from $x$ to $y_2$. Note that in a space with a global timelike curvature bound from above, such curves are indeed unique by \cite[Proposition 4.8]{beran-napper-rott}. As in previous proofs we can define a parameter $T_1$ as the infimum over all $t\in[0,1]$ such that $\tau(\gamma(t),y_1)=0$. We claim that there exists a parameter $T_0>0$ only depending on $k$ and $\Lambda$ such that $T_0\leq T_1$ i.e. it is not possible to find diamonds $I_1$ and $I_2$ as above such that $T_1$ becomes arbitrarily small. 
    If this claim holds then $I_0:=I(x,\gamma(T_0))\subset I_1\subset I_2$ and in particular $I_2$ is an aligned $T_0^{-1}$-enlargement of $ I_0$. By \Cref{prop:2} we know there exists a $c$, only depending on $T_0^{-1},k$ and $\Lambda$, such that $X$ satisfies the $(T_0^{-1},c,\Lambda)$-condition. Further by \Cref{prop:1} there exists a constant $C$ depending on $K,N, c$ and $\Lambda$ such that $X$ has the aligned $(T_0^{-1},\Lambda)$-enlargement property. In particular we have
    $$\mfm(I_2)\leq C\, \mfm(I_0)\leq C\, \mfm(I_1),$$
    which is what we wanted to show. To finish the first step it remains to prove the claim, that is to find such a $T_0$. Once again it will be enough to find such a $T_0$ in the appropriate comparison space. Indeed take $I(\bar\gamma(0),\bar y_1)=:\bar I_1\subset\bar I_2:=I(\bar\gamma(0),\bar\gamma(1))$ in $\mathbb L^2(k)$ such that $\diam^\tau (\bar I_i)=\diam^\tau (I_i)$ for $i=1,2$, $\bar\tau(\bar\gamma(0),\bar y_1)=\tau(\gamma(0),y_1)$ and $\bar\tau(\bar y_1,\bar\gamma(1))=\tau(y_1,\gamma(1))$. Then $\triangle\bar\gamma(0)\bar y_1\bar\gamma(1)$ is a comparison triangle for $\triangle\gamma(0)y_1\gamma(1)$. If we have already found a $T_0>0$ such that $\bar\gamma(T_0)\leq \bar y_1$ then by \Cref{rem:curvaturebound}\ref{rem:curvaturebound2} the curvature bound implies that $\gamma(T_0)\leq y_1$ as well, so clearly this $T_0$ satisfies $T_0\leq T_1$. The existence of $T_0$ for the spaces $\mathbb L^2(k)$ is proven in \Cref{lem:generaldoubling} below. 

\begin{figure}[h]
\centering
\begin{tikzpicture}[line cap=round,line join=round,x=0.5cm,y=0.5cm]
\clip(-9.,-1.) rectangle (9.,17.);
\fill[line width=2.pt,fill=black,fill opacity=0.10000000149011612] (5.,7.) -- (0.,12.) -- (-3.,9.) -- (2.,4.) -- cycle;
\fill[line width=2.pt,fill=black,fill opacity=0.10000000149011612] (2.,4.) -- (7.,9.) -- (0.,16.) -- (-5.,11.) -- cycle;
\draw [line width=2.pt] (0.,0.)-- (8.,8.);
\draw [line width=2.pt] (8.,8.)-- (0.,16.);
\draw [line width=2.pt] (0.,16.)-- (-8.,8.);
\draw [line width=2.pt] (-8.,8.)-- (0.,0.);
\draw [line width=2.pt] (5.,7.)-- (0.,12.);
\draw [line width=2.pt] (0.,12.)-- (-3.,9.);
\draw [line width=2.pt] (2.,4.)-- (7.,9.);
\draw [line width=2.pt] (7.,9.)-- (0.,16.);
\draw [line width=2.pt] (0.,16.)-- (-5.,11.);
\draw [line width=2.pt] (-5.,11.)-- (2.,4.);
\draw[color=black] (1,8) node {$I_1$};
\draw[color=black] (3.5,10.5) node {$\tilde I_2$};
\draw[color=black] (-2,5) node {$I_2$};
\draw [fill=black] (0,0) circle (2.5pt);
\draw[color=black] (0,-0.6) node {$x_2$};
\draw [fill=black] (0,16) circle (2.5pt);
\draw[color=black] (0,16.6) node {$y_2$};
\draw [fill=black] (0,12) circle (2.5pt);
\draw[color=black] (0,12.6) node {$y_1$};
\draw [fill=black] (2,4) circle (2.5pt);
\draw[color=black] (2,3.4) node {$x_1$};
\end{tikzpicture}
\caption{Step 2 in the proof of \Cref{thm:generaldoubling}.}
\end{figure}
    In the second step we will show $\mfm(I_2)\leq C\, \mfm(I_1)$ for a general $\rho$-enlargement $I_1=I(x_1,y_1)$ of $I_2=I(x_2,y_2)$. To do this first define $\tilde I_2:=I(x_1,y_2)$. Since $\diam^\tau(\tilde I_2)\leq\diam^\tau (I_2)\leq\rho\, \diam^\tau (I_1)$ we know that $\tilde I_2$ is a $\rho$-enlargement of $I_1$, with the two diamonds sharing one endpoint. By the first step we already know that there exists a constant $\tilde C=\tilde C(K,N,k,\rho,\Lambda)$ such that 
    $$\mfm(\tilde I_2)\leq \tilde C\, \mfm(I_1).$$
    But now $\diam^\tau (I_2)\leq\rho\, \diam^\tau (I_1)\leq \rho\, \diam^\tau(\tilde I_2)$ so $I_2$ is a $\rho$-enlargement of $\tilde I_2$, with the two diamonds sharing one endpoint, hence with the same constant $\tilde C$ as before (note that both the diameter of $\tilde I_2$ and the diameter of $I_2$ are bounded above by $\Lambda$ so we can really choose the same constant twice) we obtain
    $$\mfm( I_2)\leq \tilde C\, \mfm(\tilde I_2)$$
    and altogether
    $$\mfm(I_2)\leq \tilde C^2\, \mfm(I_1).$$
    Therefore $C:=\tilde C^2$ works as our desired constant. 
\end{proof}

\begin{lemma}\label{lem:generaldoubling}
    Let $k\in\RR$, let $\rho>1$ and consider $\mathbb L^2(k)$. There exists a constant $T_0=T_0(k,\rho)$ such that the following holds:

    If $I_2:=I(x,y_2)$ is a $\rho$-enlargement of $I_1:=I(x,y_1)$ and $\gamma:[0,1]\to \mathbb L^2(k)$ is the unique timelike geodesic from $x$ to $y_2$ then for all $t<T_0$ we have $\gamma(t)\in I_1$. In words, $\gamma$ cannot leave the chronological diamond $I_1$ before time $T_0$.
\end{lemma}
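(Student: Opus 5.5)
The plan is to reduce everything to an explicit computation in the model plane, in three cases, as in the proof of \Cref{lem:rhoclambda}. We use the observation that it suffices to find $T_0>0$ with $\gamma(T_0)\leq y_1$. Indeed, for $t<T_0$ we then have $x\ll\gamma(t)\ll\gamma(T_0)\leq y_1$, and the push-up property gives $\gamma(t)\in I_1$. Write $T:=\tau(x,y_2)$, parametrize $\gamma$ affinely on $[0,1]$, and note that $\tau(x,y_1)=\diam^\tau(I_1)\geq T/\rho$ and $y_1\in J(x,y_2)$.

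\emph{Case $k=0$.} Take null-adapted coordinates in $\mathbb R^2_1$ with $x=0$, $y_2=(T,0)$ and $y_1=(u,v)$. The conditions become $u^2-v^2\geq T^2/\rho^2$, $|v|\leq u$ and $u+|v|\leq T$. In the Minkowski plane, $\gamma(t)=(tT,0)\leq y_1$ holds iff $u-tT\geq|v|$. The key estimate is
\[
u-|v|=\frac{u^2-v^2}{u+|v|}\geq \frac{T^2/\rho^2}{T}=\frac{T}{\rho^2},
\]
so $T_0=\rho^{-2}$ works. This bound is scale invariant and independent of $T$.

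\emph{Case $k>0$.} By \cite[Lemma 3.27]{beran-saemann2023}, $\mathbb L^2(k)$ satisfies $\TCBA(0)$. We form the triangle $\triangle x y_1 y_2$ in $\mathbb L^2(k)$, with side $\gamma$ from $x$ to $y_2$, and take its Minkowski comparison triangle. This is admissible since $x\ll y_1\leq y_2$. The side lengths are preserved, so the comparison diamonds are again a $\rho$-enlargement. The case $k=0$ gives $\bar\gamma(\rho^{-2})\leq\bar y_1$, and \Cref{rem:curvaturebound}\ref{rem:curvaturebound2} transfers this to $\gamma(\rho^{-2})\leq y_1$. So $T_0=\rho^{-2}$ again.

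\emph{Case $k<0$.} This is where I expect the real work. As in \Cref{lem:rhoclambda}, one must implicitly assume $T\leq \frac{\pi}{2\sqrt{-k}}$, which is the situation in which the lemma is applied. After rescaling the metric, the configuration is determined up to isometry (and time-orientation-preserving reflection) by three numbers:
\begin{itemize}
\item $s=\sqrt{-k}\,T\in(0,\pi/2]$;
\item $a=\tau(x,y_1)/T\in[\rho^{-1},1]$;
\item $b=\tau(y_1,y_2)/T$, constrained by the reverse triangle inequality.
\end{itemize}
Define $h(s,a,b):=\sup\{t:\gamma(t)\leq y_1\}$. Using the embedding \eqref{eq:AdS}, the rotation trick for past null geodesics from $y_1$ used in \Cref{lem:rhoclambda}, and the explicit pregeodesic \eqref{eq:AdSgeod}, I would write $h$ as an explicit (or implicitly defined, continuous) function of $(s,a,b)$. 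On the closed parameter set, $h$ is positive, because $\tau(x,y_1)>0$ and the diamond has nonempty interior.

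The main obstacle is continuity and positivity at the boundary. At $s\to0$, I would use that rescaled anti-de Sitter converges to Minkowski space, where $h\geq\rho^{-2}$. When $y_1$ becomes null related to $y_2$, the quantity $h$ stays positive because $a\geq\rho^{-1}$. If continuity is awkward there, the fallback is a direct estimate mimicking the Minkowski identity. This uses the null coordinates of anti-de Sitter, in which the conformal factor is bounded above and below on $\{s\leq\pi/2\}$ uniformly. That yields an estimate of the form $u-|v|\gtrsim T/\rho^2$ up to a constant depending only on the conformal bounds.

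Compactness then gives a uniform lower bound on $h$, which we take as $T_0=T_0(k,\rho)$.
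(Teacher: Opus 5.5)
Your proposal is correct. For $k\ge 0$ it follows the paper: you get the same constant $T_0=\rho^{-2}$ and the same transfer to $k>0$ via the global $\TCBA(0)$-condition of $\mathbb L^2(k)$. One difference there is in your favour. Your identity $u-|v|=(u^2-v^2)/(u+|v|)$ handles every admissible $y_1$ at once. The paper instead normalises to $\tau(x,y_1)=\rho^{-1}\tau(x,y_2)$, computes only for the extremal $y_1$ on the null boundary of $I_2$, and simply asserts that this is the worst case.

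For $k<0$ you take a genuinely different route. The paper works at the single critical scale $\tau(x,y_2)=\frac{\pi}{2\sqrt{-k}}$, again via an extremal $y_1$ and an unproved worst-case claim. It reaches smaller diamonds by choosing $\bar k<k$ with $\tau(x,y_2)=\frac{\pi}{2\sqrt{-\bar k}}$. It then uses scale invariance and the fact that $\mathbb L^2(k)$ satisfies the global $\TCBA(\bar k)$-condition to pull $\bar\gamma(T_0)\le\bar y_1$ back to $\gamma(T_0)\le y_1$. What this buys is one fixed configuration plus the comparison principle of \Cref{rem:curvaturebound}, in keeping with the rest of the section.

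Of your two options, the conformal one is the stronger and deserves to be your main argument. In the strip coordinates $\frac{r^2}{\cos^2\theta}(-dt^2+d\theta^2)$:
\begin{itemize}
\item the causal structure is exactly Minkowskian;
\item $\gamma$ is the segment $\theta=0$, with the same arclength in both metrics;
\item on $J(x,y_2)$ one has $|\theta|\le\sqrt{-k}\,\tau(x,y_2)/2$.
\end{itemize}
So whenever $\tau(x,y_2)\le\Lambda<D_k$, the time separation on $J(x,y_2)$ is at most $1/\cos(\sqrt{-k}\,\Lambda/2)$ times that of the flat metric $r^2(-dt^2+d\theta^2)$. Your Minkowski identity then gives the explicit value $T_0=\cos^2(\sqrt{-k}\,\Lambda/2)\,\rho^{-2}$, which is $\frac{1}{2\rho^2}$ for $\Lambda=\frac{\pi}{2\sqrt{-k}}$. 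This avoids both the extremality claim and the change of comparison curvature, and it covers every $\Lambda<D_k$ rather than only $\Lambda\le D_k/2$.

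It also confirms your remark that a bound on $\tau(x,y_2)$ must be added to the statement. Placing $y_1$ near a corner of $J(x,y_2)$ shows the exit time really does tend to $0$ as $\tau(x,y_2)\to D_k$. The compactness variant is also valid, but it gives no explicit constant. Its continuity at $s=0$ is most easily justified by the same conformal picture.
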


\begin{proof}
    Let $I_1\subset I_2\subset \mathbb L^2(k)$ as above. Note that we only need to consider the case $\diam^\tau (I_2)=\rho\, \diam^\tau (I_1)$. If we already knew $T_0$ exists in this case and had $\diam^\tau (I_2)<\rho \diam^\tau (I_1)$ then there exists a diamond $\hat{I}_1:=I(x,\hat y_1)\subset I_1$ such that $\diam^\tau (I_2)=\rho\, \diam^\tau (\hat I_1)$. But now it clearly holds that $T_0\leq \hat T_1\leq T_1$, where $\hat T_1$ and $T_1$ are the parameters at which $\gamma$ leaves $\hat I_1$ and $I_1$ respectively. Henceforth we will call diamonds $I_1$, which share the starting point $x$ with $I_2$ and satisfy $\diam^\tau (I_2)=\rho\, \diam^\tau (I_1)$, admissible. Now we have to distinguish the cases $k=0$, $k>0$ and $k<0$.

\usetikzlibrary{arrows}

\definecolor{xdxdff}{rgb}{0.49019607843137253,0.49019607843137253,1.}
\definecolor{uuuuuu}{rgb}{0.26666666666666666,0.26666666666666666,0.26666666666666666}
\begin{figure}[h]
\centering
\begin{tikzpicture}[line cap=round,line join=round,>=triangle 45,x=4.0cm,y=4.0cm]
\clip(-1.4,1.8) rectangle (1.4,4.2);
\fill[line width=2.pt,fill=black,fill opacity=0.10000000149011612] (0.,2.) -- (1.,3.) -- (0.7516877853295644,3.2483122146704355) -- (-0.24831221467043552,2.2483122146704355) -- cycle;
\draw [samples=50,domain=-0.99:0.99,rotate around={90.:(0.,1.7210593199224706)},xshift=0.cm,yshift=6.86cm,line width=2.pt] plot ({1.2789406800775296*(1+(\x)^2)/(1-(\x)^2)},{1.1516752665290795*2*(\x)/(1-(\x)^2)});
\draw [line width=1.pt] (0.,2.)-- (1.,3.);
\draw [line width=2.pt] (1.,3.)-- (0.,4.);
\draw [line width=1.pt] (0.,4.)-- (-1.,3.);
\draw [line width=1.pt] (-1.,3.)-- (0.,2.);
\draw [line width=2.pt] (0.7516877853295644,3.2483122146704355)-- (-0.24831221467043552,2.2483122146704355);
\draw [line width=2.pt] (0.,2.)-- (0.,4.);
\draw [line width=1.pt] (0.,2.)-- (1.,3.);
\draw [line width=1.pt] (1.,3.)-- (0.7516877853295644,3.2483122146704355);
\draw [line width=1.pt] (0.7516877853295644,3.2483122146704355)-- (-0.24831221467043552,2.2483122146704355);
\draw [line width=1.pt] (-0.24831221467043552,2.2483122146704355)-- (0.,2.);
\begin{scriptsize}
\draw[color=black] (-1,3.65) node {$\tau_x^{-1}(\rho^{-1}T_2)$};
\draw [fill=black] (0.,2.) circle (2.5pt);
\draw[color=black] (0,1.94) node {$x$};
\draw [fill=black] (0.,4.) circle (2.5pt);
\draw[color=black] (0,4.0754357329670245) node {$y_2$};
\draw [fill=black] (0.7516877853295644,3.2483122146704355) circle (2.5pt);
\draw[color=black] (0.7821160419395771,3.3207354527672193) node {$y_1$};
\draw[color=black] (0.27,2.85) node {$\beta$};
\draw[color=black] (-0.07200583205900281,3.3704462496666077) node {$\gamma$};
\draw [fill=black] (0.,2.4966244293408715) circle (2.5pt);
\draw[color=black] (-0.11,2.570554335921904) node {$\gamma(T_0)$};
\draw[color=black] (0.5,3.6) node {$\eta$};

\end{scriptsize}
\end{tikzpicture}
\caption{Construction of $T_0$ for $k=0$}
\end{figure}
    
    Let $k=0$ so $\mathbb L^2(0)$ is the $2$-dimensional Minkowski space. Without loss of generality the curve $\gamma:[0,1]\to \mathbb L^2(0)$ is given by $t\mapsto (t\, T_2,0)$, where $T_2:=\diam^\tau(I_2)$ and in particular $x=(0,0)$ and $y_2=(T_2,0)$. Consider the set $\tau_x^{-1}(\{\rho^{-1}\, T_2\})$ which are all the points of time separation $\rho^{-1}\, T_2$ from $x$. The intersection of this set with the boundary of the diamond $I_2$ consists of two points. Choose one of them and call it $y_1$. By definition $y_1\gg x$ so we can define the parameter $T_0>0$ as the time at which $\gamma$ leaves $I(x,y_1)$. Clearly for all other admissible diamonds $I_1$, the parameter $T_1$ at which $\gamma$ leaves $I_1$ is at least $T_0$, so in that sense $T_0$ works as planned. We now calculate $T_0$ exactly, and show that it is independent of $T_2$, i.e. the size of $I_2$. We parametrize a part of the boundary of $I_2$ with the curve 
    $$\eta:[0,T_2]\to \mathbb L^2(0),\hspace{1cm}\eta(s):=\frac{1}{2}(T_2+t,T_2-t).$$
    The point $y_1$ defined above can be chosen to lie on $\eta$ and we want to find the value $s$ such that $\eta(s)=y_1$. In order to do this note that $\tau(x,\eta(s))=\sqrt{\left|\left(\frac{1}{2}(T_2-s)\right)^2-\left(\frac{1}{2}(T_2+s)\right)^2\right|}=\sqrt{T_2s}$, hence $s=\rho^{-2}T_2$ as $\rho^{-1}T_2=\tau(x,\eta(s))$. We can now calculate the point $\gamma(T_0)$ by intersecting $\gamma$ with a past directed null curve $\beta$ starting at $y_1$. Either by a direct computation or using the intercept theorem we see that $\gamma(T_0)=(T_2\rho^{-2},0)$ so (since $\gamma$ was defined on $[0,1]$) we obtain $T_0=\rho^{-2}$, which is indeed independent of $T_2$.
    

    For the case $k>0$ recall that $\mathbb L^2(k)$ satisfies the global $\TCBA(0)$-condition. Consider the causal triangle $\triangle xy_1y_2$ and a comparison triangle $\triangle\bar x\bar y_1\bar y_2$ in $\mathbb L^2(0)$. Using $T_0=\rho^{-2}$ as above we have $\bar\gamma(T_0)\leq \bar y_1$ and therefore by \Cref{rem:curvaturebound}\ref{rem:curvaturebound2} also $\gamma(T_0)\leq y_1$. In particular for $t<T_0$ we have $\gamma(t)\in I_1$ as desired.

    Finally let $k<0$. To begin with we assume that $T_2:=\diam^\tau (I_2)=\frac{\pi}{2\sqrt{-k}}$. Similar to before it is enough to consider the case where the  curve $\gamma$ is given by $\gamma(t)=(r\, \sin(t\, \frac{\pi}{2}),r\, \cos(t\, \frac{\pi}{2}),0)$, where $r=1/\sqrt{-k}$. As before define $y_1$ as one of the two intersections of $\tau_x^{-1}(\{\rho^{-1}\, T_2\})$ with the boundary of $I_2=I(\gamma(0),\gamma(1))$. Again we define $T_0$ as the parameter at which $\gamma$ leaves $I(x,y_1)$. First of all it is easy to see that for all other admissible diamonds $ I_1\subset I_2$, the parameter $T_1$ at which $\gamma$ leaves $ I_1$ is at least $T_0$. 


 \medskip
    {Now assume $\diam^\tau(I_2)<\frac{\pi}{2\sqrt{-k}}$. We claim that the same parameter $T_0$ still works. In order to see this pick $\bar k<k$ such that $\diam^\tau(I_2)=\frac{\pi}{2\sqrt{-\bar k}}$. Now define a timelike geodesic $\bar\gamma:[0,1]\to \mathbb L^2(\bar k)$ of the same length as $\gamma$ and define $\bar y_1$, $\bar I_1$ and $\bar I_2$ analogously to before. Since $\mathbb L^2(\bar k)$ can be seen as a conformal transformation of $\mathbb L^2(k)$ with constant conformal factor $\bar k/k$, the ratio of the length of the segment of $\bar \gamma$ such that it lies in $\bar I_1$ and the length of the segment of $\bar \gamma$ such that it does not lie in $\bar I_1$, is $T_0$. In particular $\bar\gamma(T_0)\leq \bar y_1$. Since $\mathbb L^2(k)$ satisfies the global $\TCBA(\bar k)$-condition by \Cref{rem:curvaturebound}\ref{rem:curvaturebound2} we also have $\gamma(T_0)\leq y_1$. Therefore $T_1\geq T_0$ as desired.} 
\end{proof}


In the positive-signature setting, the doubling condition implies bounds on the cardinality of $\eps$-separated sets. With this motivation in mind, we propose the following definition of $\eps$-separated sets in the Lorentzian setting.

\begin{definition}\label{def:eps-separated set}
Given $A\subset X$ and $\eps>0$, an \textit{$\eps$-separated set} in $A$ is a set of pairwise disjoint chronological diamonds $\{I(p_i,q_i)\}_{i\in \Omega}$ 
such that $I(p_i,q_i)\subset A$ and $\tau(p_i,q_i) = \eps$ for all $i\in\Omega$.
\end{definition}

We now state and prove a more precise version of \Cref{thm:main2}.

\begin{theorem}\label{thm:bound on eps separated sets}
Let 
$k,K\in\RR$, $N>1$ and $0<\eps<\Lambda<D_k/2$. Then there exists a constant
\[
\widetilde{C}= \widetilde{C}(k,K,N,\eps,\Lambda)>0
\]
such that the following holds. Let $(X,d,\ll,\leq,\tau,\mfm)$ be a globally hyperbolic, timelike non-branching, regular measured Lorentzian length space, where $X$ 
(and its causally reversed structure) satisfies the $\TMCP^e(K,N)$- and global $\TCBA(k)$-conditions. Moreover let $\tilde{I}$ 
be a chronological diamond with $\diam^\tau(\tilde{I})\leq \Lambda$. 
Then any $\eps$-separated set in $\tilde{I}$ has cardinality at most $\widetilde{C}$.
\end{theorem}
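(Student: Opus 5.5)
The plan is the usual volume-counting argument from the metric setting, with Theorem~\ref{thm:generaldoubling} supplying the doubling estimate. Write $\tilde I = I(p,q)$ and let $\{I(p_i,q_i)\}_{i\in\Omega}$ be an $\eps$-separated set in $\tilde I$. Each $I(p_i,q_i)$ is contained in $\tilde I$. Moreover $\diam^\tau(\tilde I)\le\Lambda\le (\Lambda/\eps)\,\eps = (\Lambda/\eps)\diam^\tau(I(p_i,q_i))$. Here we use that $\diam^\tau(I(p_i,q_i))=\tau(p_i,q_i)=\eps$, which follows from the reverse triangle inequality together with continuity of $\tau$. Thus $\tilde I$ is a $\rho$-enlargement of every $I(p_i,q_i)$, with $\rho:=\max(\Lambda/\eps,2)>1$. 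Also $\Lambda<D_k/2=\frac{\pi}{2\sqrt{-k}}$ when $k<0$, and $\Lambda<\infty$ always, so the hypotheses of Theorem~\ref{thm:generaldoubling} are met with this $\Lambda$.

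That theorem provides $C=C(K,N,k,\rho,\Lambda)$, depending only on $(k,K,N,\eps,\Lambda)$, such that $\mfm(\tilde I)\le C\,\mfm(I(p_i,q_i))$ for every $i\in\Omega$. Since the diamonds are pairwise disjoint and lie in $\tilde I$, for any finite subfamily $\Omega'\subset\Omega$ we get
\[
|\Omega'|\,\mfm(\tilde I)\le C\sum_{i\in\Omega'}\mfm(I(p_i,q_i))\le C\,\mfm(\tilde I).
\]
Dividing by $\mfm(\tilde I)$ gives $|\Omega'|\le C$, so $|\Omega|\le \widetilde C:=C$.

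The step I expect to be the main obstacle is justifying the division, namely that $0<\mfm(\tilde I)<\infty$.

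For positivity, $\tilde I$ is a nonempty open set, since $p\ll q$ because $\tilde I$ contains a diamond with $\tau=\eps>0$. Combined with $\supp\mfm=X$, part of the definition of a measured Lorentzian length space, this gives $\mfm(\tilde I)>0$.

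For finiteness, global hyperbolicity makes $J(p,q)\supset\tilde I$ compact, and $\mfm$ is Radon, so $\mfm(\tilde I)\le\mfm(J(p,q))<\infty$.

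One further point to check is that Theorem~\ref{thm:generaldoubling} applies to diamonds inside $X$ and not just aligned ones. This is exactly its content, since it handles general $\rho$-enlargements via the two-step reduction through $I(x_1,y_2)$. If $\Omega$ is empty there is nothing to prove.
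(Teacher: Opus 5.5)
Your proposal is correct and follows the paper's proof essentially verbatim. You apply Theorem~\ref{thm:generaldoubling} with $\rho=\Lambda/\eps$ to get $\mfm(\tilde I)\le C\,\mfm(I(p_i,q_i))$; your $\max(\Lambda/\eps,2)$ is harmless, since $\eps<\Lambda$ already gives $\rho>1$. You then sum over the disjoint diamonds and divide by $0<\mfm(\tilde I)<\infty$, using full support, openness of $\tilde I$, and compactness of $J(p,q)$. Your passage to finite subfamilies is a small tidying of the paper's argument, since it avoids writing $\mfm(\bigcup I)=\sum\mfm(I)$ for a disjoint family that is a priori possibly uncountable.
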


\begin{proof}
Let $\calF$ be an $\eps$-separated set in $\tilde{I}$, then
\begin{equation}\label{eq:epssep1}
\mfm(\tilde I) \geq \mfm\left(\bigcup_{I\in \calF} I\right) = \sum_{I\in\calF} \mfm(I).
\end{equation}
For any $I\in\mathcal F$ we have $\diam^\tau(I)=\varepsilon$, $I\subset \tilde I$ and $\diam^\tau(\tilde I)\leq \Lambda$. In particular $\tilde I$ is a $\Lambda/\varepsilon$ enlargement of $I$ so \Cref{thm:generaldoubling} implies 
\begin{equation}\label{eq:epsnet2}
\mfm(\tilde I)\leq \widetilde{C}\, \mfm(I)
\end{equation}
for some positive constant $\widetilde{C}=\widetilde C(K,N,k,\Lambda,\eps)\geq1$. 
Combining \eqref{eq:epssep1} and \eqref{eq:epsnet2} we obtain
\[\mfm(\tilde I)\geq \sum_{I\in\mathcal F}\mfm (I)\geq|\mathcal F|\, \widetilde C^{-1}\, \mfm(\tilde I).\]
Since $\mfm$ is a Radon measure with total support and chronological diamonds are open by strong causality, $\mfm(\tilde I)$ is positive and by compactness of causal diamonds finite. It follows that $|\mathcal F|\leq\widetilde C$. 
\end{proof}

Moreover, as in the positive-signature setting, it is possible to show that maximal $\varepsilon$-separated sets exist in the following sense.
\begin{lemma}\label{claim:maximal separated set}
Let $A\subset X$ and let $\varepsilon>0$ such that there exists a chronological diamond $I$
of timelike diameter $\varepsilon$ and $I\subset A$. Then there is a maximal $\eps$-separated set in $A$. 
\end{lemma}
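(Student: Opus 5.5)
The plan is a Zorn's lemma argument on the family of all $\eps$-separated sets in $A$, ordered by inclusion.

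\emph{Step 1: the family is nonempty.} Let $\mathcal P$ be the collection of all $\eps$-separated sets in $A$, that is, all sets $\calF$ of pairwise disjoint chronological diamonds $I(p,q)\subset A$ with $\tau(p,q)=\eps$. By hypothesis there is a chronological diamond $I\subset A$ with $\diam^\tau(I)=\eps$. We need it to be written as $I=I(p,q)$ with $\tau(p,q)=\eps$. A chronological diamond is by definition of the form $I(p,q)$. If $p\ll q$, the reverse triangle inequality (as noted after the definition of $\diam^\tau$) gives $\diam^\tau(I(p,q))=\tau(p,q)$, provided one can approximate $p,q$ from inside along a geodesic, using continuity of $\tau$ under global hyperbolicity. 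Hence $\{I\}\in\mathcal P$. If the identification $\diam^\tau(I(p,q))=\tau(p,q)$ needs care for the open diamond, I will argue via points on the geodesic from $p$ to $q$ approaching the endpoints, together with continuity of $\tau$. Alternatively, I will simply read the hypothesis as providing $I(p,q)\subset A$ with $\tau(p,q)=\eps$.

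\emph{Step 2: chains have upper bounds.} Let $\mathcal C\subset\mathcal P$ be a chain and set $\calF^*=\bigcup_{\calF\in\mathcal C}\calF$. Every element of $\calF^*$ is a diamond $I(p,q)\subset A$ with $\tau(p,q)=\eps$. For any two distinct elements $I,I'\in\calF^*$, there are $\calF,\calF'\in\mathcal C$ containing them. Since $\mathcal C$ is totally ordered, one of these contains the other, so $I$ and $I'$ lie in a common $\eps$-separated set and are therefore disjoint. Thus $\calF^*\in\mathcal P$, and it is an upper bound for $\mathcal C$.

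\emph{Step 3: conclusion.} Zorn's lemma yields a maximal element of $\mathcal P$, which is a maximal $\eps$-separated set in $A$.

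The only mildly delicate point is Step 1, the bookkeeping between "timelike diameter $\eps$" and "$\tau(p,q)=\eps$". Everything else is formal.

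Under the hypotheses of \Cref{thm:bound on eps separated sets}, with $A=\tilde I$, one could instead obtain the result without Zorn. Since cardinalities are bounded by $\widetilde C$, it suffices to pick an $\eps$-separated set of maximal cardinality, and this is automatically maximal with respect to inclusion. The statement as given, however, has no such hypotheses, so Zorn's lemma is the route I would take.
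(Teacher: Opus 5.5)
Your proposal is correct and is the paper's own argument: Zorn's lemma applied to the $\eps$-separated sets in $A$ ordered by inclusion, using that the union of a chain is again $\eps$-separated in $A$. The bookkeeping in your Step 1 is more care than needed, since the empty family is already $\eps$-separated and so the poset is nonempty in any case; the hypothesis only guarantees that the maximal set obtained is nonempty. It does no harm, though.
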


\begin{proof}
It is clear that $\eps$-separated sets in $A$ are partially ordered by inclusion, and the union of a chain of $\eps$-separated sets in $A$ is $\eps$-separated in $A$. By Zorn's lemma, the claim follows.  
\end{proof}

In the metric setting, having uniform bounds for the cardinality of $\varepsilon$-separated sets on a given collection of compact metric spaces, along with a uniform bound on the diameter, yields precompactness of this collection with respect to Gromov--Hausdorff convergence. This follows from the fact that maximal $\varepsilon$-separated sets exist and they are $\varepsilon$-nets. 

In the Lorentzian setting this relation between $\varepsilon$-separated sets and $\varepsilon$-nets is not so immediate. The main problem is that maximal $\varepsilon$-separated sets of a Lorentzian pre-length space do not necessarily cover it, but the collection of diamonds with time separation $\leq \varepsilon$ that intersect at least one element of the $\varepsilon$-separated set does cover the space (cf. \Cref{prop:separated set yields net}). However, such a collection is rarely finite and it is not clear how to induce a net of bounded cardinality out of this construction.

This is the main motivation for considering a particular class of chronological diamonds, in a particular family of Lorentzian pre-length spaces, namely, generalized cones, for our Lorentzian precompactness theorems. We do this in the following section.

\section{Generalized cones}\label{ss:generalized cones bishop gromov}
In this section we only consider Lorentzian generalized cones of the form $X = \prescript{-}{}J\times_f Y$, where $(Y,d_Y)$ is a length space, $J\subset\RR$ is an open interval and $f\colon J\to (0,\infty)$ is a continuous function. The main motivation for this is to have a distinguished timelike direction at any given point, which in turn allows the choice of canonical chronological diamonds as well as canonical enlargements of such diamonds in our arguments. Although we expect this assumption to be unnecessary, we currently lack techniques to obtain a more general result.

Under our current assumptions, we define the following collection of chronological and causal diamonds:
\begin{equation}\label{eq:hat J}
\begin{split}
&\hat{\mathcal{I}}(X) = \{I((t,y),(t',y)): t\leq t',\ y\in Y\}\\
&\hat{\calJ}(X) = \{J((t,y),(t',y)): t\leq t',\ y\in Y\}.
\end{split}
\end{equation}
i.e. all chronological or causal diamonds such that their starting- and endpoint have the same spacial component. 

Working with diamonds in $\hat{\mathcal{I}}(X)$ and $\hat{\mathcal{J}}(X)$ makes some computations less cumbersome. For example, if $I((t,y),(t',y))\in \hat{I}(X)$ then, 
\[
\tau((t,y),(t',y)) = t'-t.
\]
Indeed, by considering the future-directed causal curve given by $\gamma_o(s) = ((1-s)t+st',y)$, it is clear that $\gamma_o(0) = (t,y)$, $\gamma_o(1) = (t',y)$ and 
\[
L(\gamma_o) = t'-t,
\]
whereas for any other future-directed causal curve $\gamma=(\alpha,\beta)\colon [a,b]\to X$ with $\gamma(a) = (t,y)$ and $\gamma(b) = (t',y)$, we have 
\[
L(\gamma) = \int_a^b \sqrt{\dot\alpha^2-(f\circ \alpha)^2v_\beta^2} \leq \int_a^b \dot\alpha = \alpha(b)-\alpha(a) = t'-t,
\]
which proves the claim.

We start by recalling some basic tools to work in generalized cones. The following is a description of the chronological relation, which, in our setting, is a straightforward application of the proof of \cite[Proposition 3.22]{alexander-graf-kunzinger-saemann2023}.

\begin{lemma}\label{lem:chronological relation generalized cones}
Let $X =  \prescript{-}{}J\times_f Y$ be a generalized cone. Then $(t,y)\ll (t',y')$ if and only if
\[
d_Y(y,y')< \int_{t}^{t'}\frac{1}{f(s)}ds.
\]
\end{lemma}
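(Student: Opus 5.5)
The plan is a reparametrization argument that turns curves in the cone into curves in $Y$ measured against the \emph{conformal time} $\phi(t):=\int_{t_0}^{t}\frac{ds}{f(s)}$ (for a fixed $t_0\in J$). The key observation is that for a future-directed causal curve $\gamma=(\alpha,\beta)$, the causality condition $(f\circ\alpha)\,v_\beta\le \dot\alpha$ says precisely that
\[
v_\beta\le \frac{\dot\alpha}{f\circ\alpha}=\frac{d}{ds}\,\phi(\alpha(s)),
\]
with strict inequality almost everywhere when $\gamma$ is timelike.

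For the forward implication, suppose $(t,y)\ll(t',y')$ and choose a future-directed timelike curve $\gamma=(\alpha,\beta)\colon[a,b]\to X$ joining the two points. Integrating the strict inequality above gives
\[
d_Y(y,y')\le L_{d_Y}(\beta)=\int_a^b v_\beta<\int_a^b \frac{\dot\alpha}{f\circ\alpha}=\int_t^{t'}\frac{ds}{f(s)} .
\]
Strictness is legitimate because the integrand difference is positive almost everywhere on an interval of positive length; this uses $t<t'$, which follows from strict monotonicity of $\alpha$.

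For the converse, set $\delta:=\int_t^{t'}\frac{ds}{f(s)}-d_Y(y,y')>0$. Since $Y$ is a length space, there is a curve $\beta_0$ from $y$ to $y'$ with $L:=L_{d_Y}(\beta_0)<d_Y(y,y')+\delta/2$. Note that $t<t'$ here, since otherwise the right-hand side would be $\le 0$. Reparametrize $\beta_0$ proportionally to arclength on $[t,t']$ according to conformal time: put $\beta(s):=\beta_0\bigl(L\,\frac{\phi(s)-\phi(t)}{\phi(t')-\phi(t)}\bigr)$ and $\alpha(s)=s$. Then
\[
v_\beta(s)=\frac{L}{\phi(t')-\phi(t)}\cdot\frac{1}{f(s)}<\frac{1}{f(s)},
\]
so $-\dot\alpha^2+f^2v_\beta^2<0$ everywhere and the curve is future-directed timelike. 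It is absolutely continuous because $\phi$ is $C^1$ with $f$ continuous and positive, and $\beta_0$ is Lipschitz in arclength. If $y=y'$, the constant curve $\beta\equiv y$ already works.

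The only step requiring care is the converse: producing an almost-minimizing curve in $Y$ (this is where the length-space assumption enters) and verifying that the reparametrized curve is admissible in the sense of the definition of causal curves in the cone, which the explicit formula for $v_\beta$ settles. The paper notes that this mirrors the proof of \cite[Proposition 3.22]{alexander-graf-kunzinger-saemann2023}.
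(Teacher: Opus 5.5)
Your proof is correct, and it is essentially the argument the paper defers to (the proof of Proposition~3.22 in \cite{alexander-graf-kunzinger-saemann2023}). In one direction you integrate the pointwise bound $v_\beta<\dot\alpha/(f\circ\alpha)$ to get $d_Y(y,y')\le L_{d_Y}(\beta)<\int_t^{t'}1/f$. In the other you reparametrize an almost-minimizing curve in the length space $Y$ against the conformal time $\int 1/f$ to produce a future-directed timelike curve. One cosmetic correction: $v_\beta$ exists only almost everywhere, so the timelike inequality in the converse holds a.e. rather than ``everywhere''. That is all the definition requires.
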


Next, the following lemma is a straightforward application of \cite[Lemma 4.1]{alexander-graf-kunzinger-saemann2023} to the particular case of causal diamonds in $\hat{\mathcal{J}}(X)$.

\begin{lemma}\label{lem:causal-diamond-bounds}
Let $X =  \prescript{-}{}J\times_f Y$ be a generalized cone, let $\tilde{I} = I((\bar{t}-r,\bar{y}),(\bar{t}+r,\bar{y}))\in\hat{\calI}(X)$ and assume that $m:=\inf f>0$. Then,
\[
\tilde{I} \subset \left\{ (t,y) \in X : 
\bar{t}-r < t < \bar{t}+r,\ 
y \in {B}^{Y}_{\frac{r}{m}}(\bar{y}) 
\right\},
\]
where ${B}^{Y}_{\delta}(\bar{y})$ denotes the open ball of radius $\delta$ centered at $\bar{y}$ in $Y$.
\end{lemma}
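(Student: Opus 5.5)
The plan is to take an arbitrary point $(t,y)\in\tilde I$ and bound its two coordinates separately. The time coordinate comes from the monotonicity of the time function along causal curves. The spatial coordinate comes from \Cref{lem:chronological relation generalized cones} together with the lower bound $f\geq m$.

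\emph{Time coordinate.} Let $(t,y)\in\tilde I$, so $(\bar t-r,\bar y)\ll(t,y)\ll(\bar t+r,\bar y)$. By definition of the relations there is a future-directed timelike curve $\gamma=(\alpha,\beta)$ from $(\bar t-r,\bar y)$ to $(t,y)$. Its time component $\alpha$ is strictly increasing, so $t>\bar t-r$. The same argument applied to a curve from $(t,y)$ to $(\bar t+r,\bar y)$ gives $t<\bar t+r$. The strictness uses that a chronological relation between distinct points is realised by a non-constant curve with $\alpha$ strictly monotone; the reflexive case cannot occur for $\ll$, since $\tau(p,p)=0$.

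\emph{Spatial coordinate.} Apply \Cref{lem:chronological relation generalized cones} to the first relation. It gives
\[
d_Y(\bar y,y)<\int_{\bar t-r}^{t}\frac{ds}{f(s)}\le \frac{t-(\bar t-r)}{m}.
\]
Apply it again to the second relation, which gives
\[
d_Y(y,\bar y)<\int_t^{\bar t+r}\frac{ds}{f(s)}\le\frac{\bar t+r-t}{m}.
\]
Adding the two inequalities yields $2\,d_Y(y,\bar y)<2r/m$, that is $d_Y(y,\bar y)<r/m$, so $y\in B^Y_{r/m}(\bar y)$.

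Summing the two inequalities is the one point needing care: a single relation only yields the weaker radius $2r/m$. Alternatively, one can take the minimum of the two bounds, which is at most $r/m$ whichever half of the interval $t$ lies in. All remaining steps are direct, so no step is a real obstacle; the only thing to verify is that \Cref{lem:chronological relation generalized cones} holds in the present setting, which the paper already asserts.
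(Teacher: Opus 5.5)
Your proof is correct and takes essentially the paper's route. The paper only cites \cite[Lemma~4.1]{alexander-graf-kunzinger-saemann2023}, which bounds the spatial displacement from each endpoint by $\int\frac{ds}{f(s)}$, and then uses $f\ge m$, exactly as in your two-sided estimate. Incidentally, your separate monotonicity argument for the time coordinate can be dropped: \Cref{lem:chronological relation generalized cones} already forces $t>\bar t-r$, since $0\le d_Y(\bar y,y)<\int_{\bar t-r}^{t}\frac{ds}{f(s)}$ with $f>0$, and likewise $t<\bar t+r$.
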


The lemma states that any chronological diamond in $\hat{\mathcal{I}}(X)$, with timelike diameter $2r$, is contained in a ``timelike cylinder'' of radius $r/m$ and height $2r$. It can easily be seen that the converse statement holds as well, i.e. for any timelike cylinder as above we can find a causal diamond centered at the same point that contains it.

\begin{lemma}\label{lem:timelike-cylinder-bounds}
    Let $X =  \prescript{-}{}J\times_f Y$ be a generalized cone, let $C:=\{(t,y)\in X : \bar t-r\leq t\leq \bar t+r,\ y\in \overline{B}^Y_R(\bar y)\}\subset X$ and assume that $M:=\sup f<\infty$. Then, for any $\varepsilon>0$ such that $\bar t\pm r\pm RM\pm\varepsilon\in J$, we have 
    $$C\subset J((\bar t-r-RM-\varepsilon,\bar y),(\bar t+r+RM+\varepsilon,\bar y)).$$
\end{lemma}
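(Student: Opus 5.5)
We need to show: for every $(t,y)\in C$, $(\bar t-r-RM-\varepsilon,\bar y)\le (t,y)\le(\bar t+r+RM+\varepsilon,\bar y)$. The plan is to use the characterization of the chronological relation in \Cref{lem:chronological relation generalized cones}. This gives the stronger conclusion $\ll$, which implies $\leq$ since $\ll\subset\leq$.

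Fix $(t,y)\in C$, so $|t-\bar t|\le r$ and $d_Y(y,\bar y)\le R$. For the future endpoint $q:=(\bar t+r+RM+\varepsilon,\bar y)$, we have $t':=\bar t+r+RM+\varepsilon\ge t+RM+\varepsilon$. Since $f\le M$ on $J$ (and $f>0$),
\[
\int_t^{t'}\frac{1}{f(s)}\,ds\;\ge\;\frac{t'-t}{M}\;\ge\;\frac{RM+\varepsilon}{M}\;=\;R+\frac{\varepsilon}{M}\;>\;R\;\ge\; d_Y(y,\bar y).
\]
Here we use that the whole interval $[t,t']$ lies in $J$. This holds because $J$ is an interval containing both $t$ (as $C\subset X$) and $t'$ (by the hypothesis $\bar t\pm r\pm RM\pm\varepsilon\in J$). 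By \Cref{lem:chronological relation generalized cones}, $(t,y)\ll q$.

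Symmetrically, for $p:=(\bar t-r-RM-\varepsilon,\bar y)$ we have $t-(\bar t-r-RM-\varepsilon)\ge RM+\varepsilon$. The same estimate then gives $d_Y(\bar y,y)<\int_{\bar t-r-RM-\varepsilon}^{t}\frac{1}{f}$, and hence $p\ll (t,y)$. Therefore $(t,y)\in I(p,q)\subset J(p,q)$, which proves the inclusion $C\subset J(p,q)$ (in fact into the chronological diamond).

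The main point needing care is the applicability of \Cref{lem:chronological relation generalized cones}. Its proof (via \cite[Proposition 3.22]{alexander-graf-kunzinger-saemann2023}) uses that $Y$ is a length space, which is a standing assumption in this section. The margin $\varepsilon>0$ is exactly what makes the inequality strict. Without it, points with $d_Y(y,\bar y)=R$ and $|t-\bar t|=r$ would only be causally related under additional closedness assumptions, which we avoid.
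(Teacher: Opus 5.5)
Your proof is correct and follows essentially the same argument as the paper: both apply \Cref{lem:chronological relation generalized cones} with the estimate $\int 1/f \ge (\text{length of interval})/M$, and the margin $\varepsilon$ makes the inequality strict. The only difference is cosmetic. The paper proves the estimate on the top face $t=\bar t+r$ (and the bottom face) and then reaches the rest of $C$ via the vertical timelike curves $s\mapsto(s,y)$ and transitivity, whereas you apply the estimate directly at every height $t$, which is slightly more direct.
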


\begin{proof}
    It is enough to show that $(\bar t+r,y)\ll(\bar t+r+RM+\varepsilon,\bar y)$ for all $y\in Y$ such that $d_Y(y,\bar y)\leq R$. Indeed once we have proven this, we also have $(t,y)\ll (\bar t+r+RM+\varepsilon,\bar y)$ for all $t\leq \bar t+r$, since the curve with constant spacial component $t\mapsto(t,y)$ is timelike. Hence $C\subset J^-(\bar t+r+RM+\varepsilon,\bar y)$ and analogously $C\subset J^+(\bar t-r-RM-\varepsilon,\bar y)$. It remains to prove the claim. To do so we estimate
    \begin{align*}
        \int_{\bar t+r}^{\bar t+r+RM+\varepsilon}\frac{1}{f(s)}ds\geq\int_{\bar t+r}^{\bar t+r+RM+\varepsilon}\frac{1}{M}ds=(\bar t+r+RM+\varepsilon-\bar t-r)\frac{1}{M}>R\geq d(y,\bar y),
    \end{align*}
    and the claim follows from \Cref{lem:chronological relation generalized cones}.
\end{proof}

The following theorem summarizes parts of \cite[Lemma~3.13, Theorem~3.29]{alexander-graf-kunzinger-saemann2023} that we will use in the sequel.
\begin{theorem}\label{thm:nice parametrizations}
Let $(Y,d)$ be a geodesic space, let $X=\prescript{-}{}J\times_f Y$ be a generalized cone and let
\[
\gamma = (\alpha,\beta) \colon [0,b] \to X
\]
be a future-directed timelike geodesic. Then $\gamma$ admits an absolutely continuous parametrization with respect to $\tau$-arclength, i.e.,
\[
-\dot{\alpha}^2 + (f \circ \alpha)^2 \, v_\beta^2 = -1 \quad \text{a.e.}
\]
and with respect to such parametrization, $v_\beta$ is proportional to $\frac{1}{(f \circ \alpha)^2}$.

Moreover, $\gamma$ also admits a parametrization such that $\alpha(t)=t$.
\end{theorem}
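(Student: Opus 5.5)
The statement collects \cite[Lemma~3.13, Theorem~3.29]{alexander-graf-kunzinger-saemann2023}, so I would mostly reassemble their argument. It has three independent parts: the $\tau$-arclength parametrization, the proportionality $v_\beta\propto (f\circ\alpha)^{-2}$, and the time-function parametrization.

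\textbf{Arclength.} Since $\gamma=(\alpha,\beta)$ is a future-directed timelike geodesic, $\alpha$ is strictly increasing and the integrand $\sqrt{\dot\alpha^2-(f\circ\alpha)^2v_\beta^2}$ is positive almost everywhere. Set
\[
\phi(s)=\int_0^s\sqrt{\dot\alpha^2-(f\circ\alpha)^2v_\beta^2}.
\]
This is absolutely continuous and strictly increasing, so I would reparametrize by its inverse. This is the generalized-cone version of \cite[Corollary~3.25]{kunzinger-saemann2018}. The one technical point is that $\phi^{-1}$ is absolutely continuous. That holds because the integrand is positive a.e., so $\phi^{-1}$ maps null sets to null sets. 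The chain rule for metric derivatives then gives $-\dot\alpha^2+(f\circ\alpha)^2v_\beta^2=-1$ a.e.

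\textbf{Time-function parametrization.} For the last sentence, $\alpha$ is strictly increasing and absolutely continuous, so I would reparametrize by $\alpha^{-1}$. The only check is again absolute continuity of the inverse, which follows from $\dot\alpha\ge 1$ a.e. in the arclength parametrization.

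\textbf{Proportionality $v_\beta\propto (f\circ\alpha)^{-2}$.} This is the main step, and I would do it by reduction to a $2$-dimensional warped product. Let $\ell(s)=\int_0^s v_\beta$ be the $Y$-arclength of $\beta$. The length of $\gamma$ depends only on the planar curve $(\alpha,\ell)$ in the $2$-dimensional smooth-metric spacetime $-dt^2+f(t)^2d\ell^2$. Because $Y$ is geodesic, $\beta$ may be replaced by a $Y$-geodesic of the same length. Conversely, any planar competitor $(\tilde\alpha,\tilde\ell)$ with the same endpoints lifts to a competitor in $X$ with $\tilde\beta$ following that minimal geodesic, and lengths only decrease if $\ell$ is not monotone. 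Hence maximality of $\gamma$ in $X$ forces $(\alpha,\ell)$ to be a maximizer in the warped plane.

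The difficulty is that $f$ is only continuous, so I cannot just write down the geodesic equations. I would therefore derive a Clairaut-type first integral variationally, without differentiating $f$. Take variations of $\ell$ alone, $\ell+\varepsilon\eta$ with $\eta$ compactly supported, keeping $\alpha$ fixed. The first variation of $\int\sqrt{\dot\alpha^2-f(\alpha)^2\dot\ell^2}$ gives
\[
\int \frac{f(\alpha)^2\dot\ell}{\sqrt{\dot\alpha^2-f(\alpha)^2\dot\ell^2}}\,\dot\eta=0 \quad\text{for all such }\eta.
\]
By the du Bois-Reymond lemma, $f(\alpha)^2\dot\ell/\sqrt{\cdots}$ is a.e. constant. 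In the $\tau$-arclength parametrization the square root equals $1$, so $v_\beta=\dot\ell=\mathrm{const}\cdot f(\alpha)^{-2}$.

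Justifying differentiation under the integral is where timelikeness is needed: the integrand must be bounded away from $0$. I would first localize to subintervals on which this holds, using $-\dot\alpha^2+f^2v_\beta^2=-1$ and the bounds on $f$ along the compact curve. The constants obtained on overlapping subintervals agree, so the first integral is global.
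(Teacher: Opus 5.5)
Your two reparametrization steps are fine. The paper gives no proof of its own here; it only quotes \cite[Lemma~3.13, Theorem~3.29]{alexander-graf-kunzinger-saemann2023}. Reducing to the planar curve $(\alpha,\ell)$ and extracting a Clairaut-type first integral from variations of $\ell$ alone is a sound strategy. The gap is in the step you call technical, and your proposed remedy does not address it.

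In the $\tau$-arclength parametrization you know a priori only that $v_\beta\in L^1$, since $\dot\alpha=\sqrt{1+(f\circ\alpha)^2v_\beta^2}$ is merely integrable. For a smooth compactly supported $\eta$, the curve $(\alpha,\ell+\varepsilon\eta)$ is causal only if $\varepsilon f(\alpha)^2\dot\eta\,(2v_\beta+\varepsilon\dot\eta)\le 1$ a.e. An $L^1$ function can be essentially unbounded on every interval. If $v_\beta$ is, this inequality fails for every $\varepsilon\neq0$ on the part of the support where $\varepsilon\dot\eta>0$. Then the variation is not a competitor at all, and there is no dominating function for differentiating under the integral. Localizing to subintervals cannot cure this. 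In this parametrization the unperturbed integrand is identically $1$, and bounds on $f$ along the curve give no bound on $v_\beta$. Switching to the parametrization $\alpha(t)=t$ only moves the problem: there $v_\beta\le 1/f$ is bounded, but $\sqrt{1-f^2v_\beta^2}$ need not be bounded away from $0$ on any subinterval.

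The repair is to localize to measurable sets and to use Lipschitz rather than smooth variations. For $M>0$ put $E_M=\{v_\beta\le M\}$ and $g=f(\alpha)^2v_\beta$. Take $\eta(s)=\int_0^s w$ with $w\in L^\infty$, $w=0$ off $E_M$, and $\int_0^b w=0$. The perturbed integrand $\sqrt{1-\varepsilon f(\alpha)^2w(2v_\beta+\varepsilon w)}$ is then at least $1/\sqrt2$ for $|\varepsilon|$ small. So the perturbed curve is timelike, and dominated convergence gives $\int_0^b g\,w=0$. Now choose $w=\mathbf 1_{E_M}(g-\bar g_M)$, where $\bar g_M$ is the mean of $g$ over $E_M$; this yields $g=\bar g_M$ a.e.\ on $E_M$. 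Since $\bigcup_M E_M$ has full measure, $g$ is a.e.\ constant, which is the claim.

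For the lift, clamp $\ell+\varepsilon\eta$ to $[0,\ell(b)]$ and compose with the unit-speed reparametrization of $\beta$ itself. Clamping can only decrease $|\dot\ell|$, and composing with a $1$-Lipschitz curve can only decrease the fibre speed. Both operations increase length, so the lift is a causal competitor from $\gamma(0)$ to $\gamma(b)$ whose length is at least that of the planar curve. In particular, this step needs neither minimality of $\beta$ nor the geodesic hypothesis on $Y$.
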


\subsection{Generalized cones with timelike curvature bounds}\label{s:separated sets in generalized cones}

In this subsection we consider generalized cones satisfying the $\TMCP^e(K,N)$- and global $\TCBA(k)$-conditions for some $K,k\in \mathbb{R}$ and $N>1$, and prove that certain classes of subsets of these spaces are precompact.
First we observe that this class is non-trivial.

\begin{example}\label{examples of generalized cones}
The following construction yields globally hyperbolic, timelike non-branching, regular measured Lorentzian length spaces (actually, weighted Lorentzian manifolds) satisfying the $\TMCP^e(K,N)$-condition for some constants $K,N$ as well as the global $\TCBA(0)$-condition. In particular, these spaces fulfill the hypothesis of \Cref{thm:generaldoubling}.

Consider $X = \prescript{-}{}J\times_f Y^n$, where $f\colon J\to (0,\infty)$ is $K'$-convex, i.e.\ $f''\geq K'f$. 
Let $Y^n$ be a complete Riemannian manifold of dimension $n\geq2$ with sectional curvature bounded above by $K$, for 
\[K :=\inf\{K'f^2-(f')^2\} > -\infty.\]  
Then, by \cite[Corollary~5.4]{alexander-graf-kunzinger-saemann2023}, $X$ has the $\TCBA(K')$-condition. Moreover, since $Y$ is geodesic and proper (by the Hopf--Rinow theorem), then $X$ is a globally hyperbolic, regular Lorentzian length space \cite[Corollary~4.9 and Proposition~4.10]{alexander-graf-kunzinger-saemann2023}, and assuming that $Y$ and $f$ are sufficiently smooth, we can guarantee that the Lorentzian metric in $X$ is $C^{1,1}$-smooth, which is sufficient to imply the timelike non-branching property (by the Picard--Lindel\"of theorem).

By \cite{harris1982}, Lorentzian manifolds with a one-sided timelike curvature bound have either constant curvature or unbounded curvature in the opposite direction. In this case, it is clear that $X$ does not have constant sectional curvature in general (see computations below), therefore it has unbounded timelike sectional curvature from below. Moreover, since $X$ has timelike curvature bounded from above, it has timelike Ricci curvature bounded above, and by \cite{dajczer-nomizu80}, $X$ also has timelike Ricci curvature unbounded from below.

Nevertheless, by imposing additional conditions on $f$, it is possible to define $F\colon X\to (0,\infty)$ such that $X$ equipped with the reference measure $\mfm=e^{-F}d\vol_{g_X}$ satisfies the $\TMCP^e(0,N)$-condition for sufficiently large $N$. 

Indeed, let $F\colon X\to \mathbb{R}$ be such that $\langle \nabla F,v\rangle = 0 $ for all $v\in TY$. 
If $z \in TX$ is such that $\langle z, z\rangle = -1$, then $z$ can be written as
\[
 z = \alpha \partial_t + \beta v 
\]
for some $v\in TY$ such that $|v|_Y = 1$, and $\alpha,\beta\in \mathbb{R}$ such that $\alpha^2-f^2\beta^2 = 1$. By straightforward computations (see, for example, \cite[Corollary 7.43]{oneill}), we obtain
\begin{align}
&\Ric_X(z,z) = 
-\alpha^2n\frac{f''}{f}+\beta^2\left(\Ric_Y(v,v)-\frac{f''}{f}-(n-1)\left(\frac{f'}{f}\right)^2\right), \label{eq:ricci example}\\
&\Hess(F)(z,z) = \alpha^2F''+\beta^2F'\frac{f'}{f}, \nonumber\\
&dF(z) = 
\alpha F'. \nonumber
\end{align}
Therefore, 
\begin{align*}
&\left(\Ric_X + \Hess(F) - \frac{1}{N-n}dF\otimes dF \right)(z,z) \\
&\quad = \alpha^2\left(-n\frac{f''}{f}+F''-\frac{(F')^2}{N-n}\right)+\beta^2\left(\Ric_Y(v,v) - \frac{f''}{f} - (n-1)\left(\frac{f'}{f}\right)^2+F'\frac{f'}{f}\right). 
\end{align*}
In particular, if
\begin{equation}\label{eq:conditions-bakry-emery-bounded below}
\begin{cases}
\displaystyle F'' \geq n\frac{f''}{f}+\frac{(F')^2}{N-n},\\
\displaystyle \Ric_Y(v,v) \geq \frac{f''}{f} + (n-1)\left(\frac{f'}{f}\right)^2 - F'\frac{f'}{f},
\end{cases}
\end{equation}
then
\[
\Ric^{N,F}_X(z,z)=\left(\Ric_X + \Hess(F) - \frac{1}{N-n}dF\otimes dF\right)(z,z) \geq 0
\] 
for any unit timelike $z$. By \cite[Theorem~3.1 and Proposition~3.12]{cavalletti-mondino2024}, it follows that $X$, endowed with $\mfm$, satisfies the $\TMCP^e(0,N)$-condition. 

We can achieve this, for example, by letting $I = (\varepsilon,1)$, $f(t) = t^2$, $K'=0$, $K=-4$, and $F(t,x)=-2n\log(t)+e^{Ct}$. In this case, \eqref{eq:conditions-bakry-emery-bounded below} is equivalent to
\[
\begin{cases}
\displaystyle  N \geq n+\frac{1}{C^2e^{Ct}}\left(Ce^{Ct}-\frac{2n}{t}\right)^2 \\
\displaystyle \Ric_Y(v,v) \geq \frac{8n-2}{t^2} - \frac{2}{t}Ce^{Ct}
\end{cases}
\]
which holds for sufficiently large $C, N > 0$ and suitable $Y$. Finally note that this space even satisfies the global $\TCBA(0)$-condition. Indeed it is easy to check that $X$ satisfies all assumptions of \cite[Corollary 4.9]{eroes-gieger} yielding the curvature bound does indeed hold globally. 
\end{example}

In the following we will try to relate the notions of $\eps$-separated sets and $\eps$-nets as defined in \cite[Definition 3.2]{mondino-saemann2025}. Recall that, given $\eps>0$ and a subset $A\subset X$, an \textit{$\eps$-net for $A$} is a collection $\{J(p_i,q_i)\}_{i\in \Omega}$ of causal diamonds in $X$ such that:
\begin{itemize}
    \item $\tau(p_i,q_i) \leq \eps$ for all $i\in \Omega$,
    \item $A\cap J(p_i,q_i)\neq\varnothing$ for all $i\in\Omega$, and
    \item $A\subset \bigcup_{i\in\Omega} J(p_i,q_i)$.
\end{itemize}

Recall that, in the positive-signature setting, if $(X,d)$ is a metric space, a set $S$ is $\eps$-separated if $d(x,y)\geq \varepsilon$ for all $x,y\in S$, whereas an $\eps$-net is a set of points $S\subset X$ such that $X\subset \bigcup_{x\in S} B_\eps(x)$. In that context, maximal $\eps$-separated sets are also $\eps$-nets.  Indeed if $S$ is a maximal $\eps$-separated set in $X$, then any other point in $X$ has to be at distance less than $\eps$ to some point in $S$, meaning 
$\{B_{\eps}(x)\}_{x\in S}$ covers $X$, i.e. $S$ is an $\eps$-net. Furthermore, if $X$ is a length space, it is not difficult to see that $S$ is an $\eps$-separated set if and only if $\{B_{\eps/2}(x)\}_{x\in S}$ is a disjoint set of balls. Therefore, in the context of length spaces, maximal disjoint sets of open $\eps/2$-balls yield $\eps$-nets simply by doubling the radii of those balls.  


In the Lorentzian setting this is not quite as simple. Let $\{I_i\}_{i\in\Omega}\subset\hat{\calI}(X)$ be a maximal $\eps$-separated set. The existence of such a maximal set of elements in $\hat{\mathcal{I}}(X)$ can be proven analogously to \Cref{lem:maximal separated set is net}. Then, denoting a doubling of each $I_i$ by $\widetilde I_i$, the set $\{\widetilde I_i\}_{i\in\Omega}$ is not necessarily a $2\eps$-net as illustrated by the following example.

\definecolor{qqwwzz}{rgb}{0.,0.4,0.6}
\begin{figure}[h]
\begin{tikzpicture}[line cap=round,line join=round,x=0.7cm,y=0.7cm]
\clip(-3.5,-5.) rectangle (3.5,5.);
\fill[line width=1.pt,fill=black,fill opacity=0.10000000149011612] (0.,-2.) -- (2.,0.) -- (0.,2.) -- (-2.,0.) -- cycle;
\fill[line width=1.pt,color=qqwwzz,fill=qqwwzz,fill opacity=0.10000000149011612] (0.,-4.) -- (0.5,-3.5) -- (0.5,-2.) -- (2.5,0.) -- (0.5,2.) -- (0.5,3.5) -- (0.,4.) -- (-0.5,3.5) -- (-0.5,2.) -- (-2.5,0.) -- (-0.5,-2.) -- (-0.5,-3.5) -- cycle;
\draw [line width=1.pt] (-2.5,2.)-- (-0.5,2.)-- (-0.5,4.5)-- (0.5,4.5)-- (0.5,2.)-- (2.5,2.)-- (2.5,-2.)-- (0.5,-2.)-- (0.5,-4.5)-- (-0.5,-4.5)-- (-0.5,-2.)-- (-2.5,-2.);
\draw [line width=1.pt] (-2.5,-2.)-- (-2.5,2.);
\draw [line width=1.pt] (0.,-2.)-- (2.,0.);
\draw [line width=1.pt] (2.,0.)-- (0.,2.);
\draw [line width=1.pt] (0.,2.)-- (-2.,0.);
\draw [line width=1.pt] (-2.,0.)-- (0.,-2.);
\draw [line width=1.pt,color=qqwwzz] (0.,-4.)-- (0.5,-3.5);
\draw [line width=1.pt,color=qqwwzz] (0.5,-3.5)-- (0.5,-2.);
\draw [line width=1.pt,color=qqwwzz] (0.5,-2.)-- (2.5,0.);
\draw [line width=1.pt,color=qqwwzz] (2.5,0.)-- (0.5,2.);
\draw [line width=1.pt,color=qqwwzz] (0.5,2.)-- (0.5,3.5);
\draw [line width=1.pt,color=qqwwzz] (0.5,3.5)-- (0.,4.);
\draw [line width=1.pt,color=qqwwzz] (0.,4.)-- (-0.5,3.5);
\draw [line width=1.pt,color=qqwwzz] (-0.5,3.5)-- (-0.5,2.);
\draw [line width=1.pt,color=qqwwzz] (-0.5,2.)-- (-2.5,0.);
\draw [line width=1.pt,color=qqwwzz] (-2.5,0.)-- (-0.5,-2.);
\draw [line width=1.pt,color=qqwwzz] (-0.5,-2.)-- (-0.5,-3.5);
\draw [line width=1.pt,color=qqwwzz] (-0.5,-3.5)-- (0.,-4.);
\draw[color=black] (0,0) node {$I$};
\draw[color=qqwwzz] (0,-3) node {$\tilde I$};
\draw[color=black] (-2,-1.5) node {$A$};
\end{tikzpicture}
\caption{$\varepsilon$-separated sets do not yield $2\varepsilon$-nets.}
\label{figure:nets}
\end{figure}

\begin{example}
    Consider an open subset $A$ of Minkowski space as as illustrated by \Cref{figure:nets}. The set $\{I\}$ consisting only of one chronological diamond $I$ of diameter $\varepsilon$ is a maximal $\varepsilon$-separated set. Its doubling $\tilde I$ however does not yield a $2\varepsilon$-net. 
\end{example}

To fix the issues brought up by the above example, we will have to define a different kind of doubling. Compare the following definition with \cite[Definition 4.1]{mccann-saemann2022}. 

\begin{definition}
Given $I(p,q)\in \hat{\calI}(X)$, define the set 
\[
D(I(p,q)) := \bigcup\{I(x,y)\in\hat{\calI}(X):I(x,y)\cap I(p,q)\neq \varnothing,\ \tau(x,y)=\tau(p,q)\}.
\]
\end{definition}

The lemma below is now analogous to the previously mentioned fact that maximal $\eps$-separated sets yield $\eps$-nets in the context of metric spaces. 
\begin{lemma}\label{lem:maximal separated set is net}
Let $\{I(p_i,q_i)\}_{i\in\Omega}\subset \hat{\calI}(X)$ be a maximal $\eps$-separated set in $A$. Then the collection $\{D(I(p_i,q_i))\}_{i\in \Omega}$ covers $A$.
\end{lemma}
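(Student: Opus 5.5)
The argument is by contradiction, mirroring the metric fact that a maximal $\eps$-separated set is an $\eps$-net. Fix $z\in A$ and suppose $z\notin D(I(p_i,q_i))$ for every $i\in\Omega$. The aim is to produce a diamond $I\in\hat{\calI}(X)$ with $\tau$-diameter $\eps$, contained in $A$ and disjoint from all $I(p_i,q_i)$. Adjoining $I$ to the family would then give a strictly larger $\eps$-separated set in $A$, contradicting maximality.

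\textbf{Step 1: a diamond through $z$.} Write $z=(t_z,y_z)$. A natural candidate is the diamond in $\hat{\calI}(X)$ centred vertically at $z$:
\[
I_z:=I\bigl((t_z-\eps/2,y_z),(t_z+\eps/2,y_z)\bigr).
\]
By the computation following \eqref{eq:hat J}, its endpoints have time separation exactly $\eps$. The curve $s\mapsto(s,y_z)$ is timelike, so $z\in I_z$. (For this one needs $t_z\pm\eps/2\in J$; I will also allow any $I\in\hat{\calI}(X)$ with $\tau=\eps$ that contains $z$, choosing one inside $A$ if possible.)

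\textbf{Step 2: disjointness.} Suppose $I_z\cap I(p_i,q_i)\neq\varnothing$ for some $i$. Since $I_z\in\hat{\calI}(X)$ and $\tau$ of its endpoints equals $\eps=\tau(p_i,q_i)$, the definition of $D$ gives $I_z\subset D(I(p_i,q_i))$. In particular $z\in D(I(p_i,q_i))$, contradicting the assumption. Hence $I_z$ is disjoint from every element of the family. This step is immediate: the definition of $D$ was designed precisely for it, being a union over all same-size hat-diamonds meeting $I(p_i,q_i)$.

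\textbf{Step 3 (main obstacle): $I_z\subset A$.} The $\eps$-separated condition requires $I_z\subset A$, but $z\in A$ alone does not guarantee this; the figure example shows exactly this boundary problem. I expect this step to be the delicate one.

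The first thing I would try is to interpret ``covers $A$'' relative to the points of $A$ that lie in some diamond of $\hat{\calI}(X)$ of size $\eps$ contained in $A$. That is, I would read the statement as covering the union of all admissible $\eps$-diamonds in $A$. For such a point $z$, choose a diamond $I\in\hat{\calI}(X)$ with $z\in I\subset A$ and $\tau=\eps$, and run Step 2 with $I$ in place of $I_z$. This yields $z\in I\subset D(I(p_i,q_i))$ for some $i$, or else a contradiction with maximality.

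If the intended $A$ is itself a diamond $\tilde I\in\hat{\calI}(X)$, I would additionally check, using Lemma~\ref{lem:chronological relation generalized cones}, that every point of $\tilde I$ lies in such a sub-diamond of size $\eps$ contained in $\tilde I$, provided $\eps$ is small relative to $\diam^\tau(\tilde I)$. The idea is to slide the vertical segment through $z$ within $\tilde I$, using that $\tilde I$ is the region between two $d_Y$-cones in the time coordinate. This may fail near the spacelike corners, so as a fallback I would state the covering for the admissible part of $A$ only.
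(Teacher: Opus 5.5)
Your Steps~1--2 are exactly the paper's proof. The paper takes any $I(p,q)\in\hat{\calI}(X)$ with $x\in I(p,q)$ and $\tau(p,q)=\eps$, uses the definition of $D$ to see that $I(p,q)$ misses every $I(p_i,q_i)$, and invokes maximality.

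Your Step~3 objection is correct and exposes a gap in that proof. The paper never checks $I(p,q)\subset A$, which an $\eps$-separated set \emph{in $A$} requires, and the lemma is false for general $A$:
\begin{enumerate}
\item If $A=\tilde I\in\hat{\calI}(X)$ with $\diam^\tau(\tilde I)<\eps$, no member of $\hat{\calI}(X)$ of $\tau$-diameter $\eps$ fits in $A$. So the empty family is maximal and covers nothing.
\item If $A=I\cup\{z\}$ with $I\in\hat{\calI}(X)$ of $\tau$-diameter $\eps$ and $z$ far from $I$, every member of a maximal family is open, hence lies in the interior $I$ of $A$. Then $z$ lies in none of the sets $D(\cdot)$.
\end{enumerate}
Your reformulation is the right statement: $\{D(I(p_i,q_i))\}$ covers the union $A_\eps$ of all members of $\hat{\calI}(X)$ of $\tau$-diameter $\eps$ contained in $A$. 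Steps~1--2, run with such a diamond, prove it completely.

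Your closing hedge is unnecessary in the case used later, $A=\tilde I=I((a,\bar y),(b,\bar y))$ in \Cref{prop:separated set yields net} and \Cref{thm:bound on eps net}. There $A_\eps=\tilde I$ whenever $\eps\le b-a=\diam^\tau(\tilde I)$, corners included, and this is sharp by the first example. Here is the argument.

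Let $\Phi(t):=\int_a^t d\sigma/f(\sigma)$, $z=(t_z,y_z)\in\tilde I$ and $r:=d_Y(\bar y,y_z)$. By \Cref{lem:chronological relation generalized cones}, $r<\min\{\Phi(t_z),\Phi(b)-\Phi(t_z)\}$.

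For $s\in[a,b-\eps]$ put $h(s):=\min\{\Phi(s),\Phi(b)-\Phi(s+\eps)\}\ge 0$. If $d_Y(\bar y,y)\le h(s)$, the triangle inequality in $Y$ and the same lemma give $I((s,y),(s+\eps,y))\subset\tilde I$.

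Choose $y$ on a geodesic from $\bar y$ to $y_z$ with $d_Y(\bar y,y)=\min\{r,h(s)\}$. If $Y$ is only a length space, use an almost-minimizing curve; the remaining inequalities are strict, so the error is absorbed. Unwinding the minima, $z\in I((s,y),(s+\eps,y))$ as soon as
\[
s\in S:=[a,b-\eps]\cap(t_z-\eps,t_z)\quad\text{and}\quad \Phi(t_z)+r<\psi(s)<\Phi(b)+\Phi(t_z)-r,\qquad \psi(s):=\Phi(s)+\Phi(s+\eps).
\]

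The set $S$ is a nonempty interval on which $\psi$ is continuous and increasing. Using only $r<\Phi(t_z)$, $r<\Phi(b)-\Phi(t_z)$ and $a+\eps\le b$, one checks the following:
\begin{enumerate}
\item At or near the left end of $S$, $\psi$ equals $\Phi(a+\eps)$ or $\Phi(t_z-\eps)+\Phi(t_z)$, so $\psi<\Phi(b)+\Phi(t_z)-r$ there.
\item At or near the right end, $\psi$ equals $\Phi(b-\eps)+\Phi(b)$ or $\Phi(t_z)+\Phi(t_z+\eps)$, so $\psi>\Phi(t_z)+r$ there.
\end{enumerate}
By connectedness, $\psi(S)$ meets the required interval.

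So for $A\in\hat{\calI}(X)$ with $\diam^\tau(A)\ge\eps$ the lemma holds verbatim with your proof. Consequently, in \Cref{thm:bound on eps net} the case $\diam^\tau(\tilde I)<\rho^{-1}\eps$ must be handled separately, e.g.\ by the single causal diamond $J((a,\bar y),(b,\bar y))$.
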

\begin{proof}
Assume there exists $x\in A\setminus \bigcup_{i\in\Omega} D(I(p_i,q_i))$, then take any chronological diamond $I(p,q)\in \hat{\calI}(X)$ such that $x\in I(p,q)$ and $\tau(p,q)=\eps$. By definition of the operator $D$, it follows that $I(p,q)\cap I(p_i,q_i)=\varnothing$ for all $i\in \Omega$. Therefore, $\{I(p_i,q_i)\}_{i\in\Omega}\cup\{I(p,q)\}$ is an $\eps$-separated set in $A$, which contradicts the maximality of $\{I(p_i,q_i)\}_{i\in\Omega}$. 
\end{proof}

While at first glance this seems to resolve the problem of relating $\eps$-separated sets with $\eps$-nets, note that the sets $D(I(p_i,q_i))$ are neither chronological nor causal diamonds. To resolve this issue we will try to find a causal diamond $\tilde J_i\in\hat\calJ(X)$ of timelike diameter $\rho\varepsilon$ such that $D(I(p_i,q_i))\subset \tilde J_i$. Then the sets $\tilde J_i$ should give a $\rho\eps$-net. To make this more precise we will introduce the following notion.

\begin{definition}\label{def:homothety}
Let $I(p,q)\in\hat{\mathcal{I}}(X)$ be such that $p=(t-r,y)$, $q=(t+r,y)$ with appropriate $t\in\RR,r>0$, and let $\rho\geq 0$. Then the \textit{canonical $\rho$-enlargement} of $I(p,q)$, denoted by $F_\rho(I(p,q))$, is the causal diamond $J(p',q')$ given by
\[p':=(t-\rho r,y),\quad q':=(t+\rho r,y).\]
\end{definition}

\begin{remark}
\begin{enumerate}[label=(\roman*)]
    \item The canonical $\rho$-enlargement of a chronological diamond is defined to be a causal diamond because we use them to construct $\varepsilon$-nets, which in turn are defined as a collection of causal diamonds. 
    \item Note that given a chronological diamond $I_o=I((t-r,y),(t+r,y))$ the object $F_\rho(I_o)$ is not always defined since the parameters $t-\rho r$ or $t+\rho r$ might not lie in $J$, the domain of $f$. Going forward we will always have to take care of this subtlety. 
\end{enumerate}
\end{remark}

We are now positioned to prove that any maximal $\eps$-separated set yields an $\eps'$-net for certain $\eps'>\eps>0$.







\begin{proposition}\label{prop:separated set yields net}
    Let $A\subset X$ be nonempty, let $\varepsilon>0$, assume that $m:=\inf f>0$ and $M:=\sup f<\infty$ and define $\rho:=3\,(1+M/m)$. Let $\{I_i\}_{i\in\Omega}\subset \hat{\calI}(X)$ be a maximal $\varepsilon$-separated set in $A$. Then $\{F_\rho(I_i)\}_{i\in\Omega}$ is a $\rho\varepsilon$-net for $A$ if all causal diamonds $F_\rho(I_i)$ are defined.
\end{proposition}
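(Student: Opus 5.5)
The plan is to verify the three defining properties of a $\rho\eps$-net for $\{F_\rho(I_i)\}_{i\in\Omega}$. The time-separation bound is immediate. Write $I_i=I((t_i-\eps/2,y_i),(t_i+\eps/2,y_i))$; then $F_\rho(I_i)=J((t_i-\rho\eps/2,y_i),(t_i+\rho\eps/2,y_i))$. As computed at the beginning of this section, diamonds in $\hat\calJ(X)$ have $\tau$ equal to the difference of their time coordinates, so $\tau=\rho\eps$. Nonempty intersection with $A$ is also clear: $F_\rho(I_i)\supset I_i$ because $\rho\geq 1$, and $I_i\subset A$ is nonempty.

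The substance is the covering property. By \Cref{lem:maximal separated set is net}, $A\subset\bigcup_i D(I_i)$, so it suffices to show $D(I_i)\subset F_\rho(I_i)$ for every $i$. Take $I(x,y)\in\hat\calI(X)$ with $\tau(x,y)=\eps$ and $I(x,y)\cap I_i\neq\varnothing$. Write $I(x,y)=I((s-\eps/2,z),(s+\eps/2,z))$ and pick $w$ in the intersection.

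By \Cref{lem:causal-diamond-bounds}, $w=(t_w,y_w)$ satisfies $|t_w-t_i|<\eps/2$, $d_Y(y_w,y_i)<\eps/(2m)$, $|t_w-s|<\eps/2$ and $d_Y(y_w,z)<\eps/(2m)$. Hence $|s-t_i|<\eps$ and $d_Y(z,y_i)<\eps/m$. Applying \Cref{lem:causal-diamond-bounds} once more, every point of $I(x,y)$ has time coordinate within $\eps/2$ of $s$, hence within $3\eps/2$ of $t_i$, and spatial component within $\eps/(2m)+\eps/m=3\eps/(2m)$ of $y_i$. Thus $I(x,y)$ lies in the closed cylinder $\{|t-t_i|\leq 3\eps/2,\ y\in\overline B^Y_{3\eps/(2m)}(y_i)\}$.

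\Cref{lem:timelike-cylinder-bounds} with $r=3\eps/2$ and $R=3\eps/(2m)$ places this cylinder inside $J((t_i-r-RM-\delta,y_i),(t_i+r+RM+\delta,y_i))$ for any $\delta>0$. Here $r+RM=\tfrac32\eps(1+M/m)=\rho\eps/2$. The $\delta$ can be avoided in one of two ways. The first is to use strict inequalities: the spatial bound is actually $<3\eps/(2m)$, so the estimate in the proof of \Cref{lem:timelike-cylinder-bounds} gives $\int 1/f\geq RM/M=R>d_Y$ directly, with no slack needed. The second, if one prefers, is to take $\delta\to0$ and use closedness of causal diamonds under global hyperbolicity. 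Either way $I(x,y)\subset F_\rho(I_i)$.

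The step I expect to be the main obstacle is this boundary bookkeeping, together with confirming that the needed time parameters lie in $J$. The latter is exactly the assumption that all $F_\rho(I_i)$ are defined. The rest is a direct chaining of \Cref{lem:causal-diamond-bounds}, \Cref{lem:timelike-cylinder-bounds} and \Cref{lem:maximal separated set is net}.
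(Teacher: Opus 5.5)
Your argument follows the paper's route exactly. You reduce covering to $D(I_i)\subset F_\rho(I_i)$ via \Cref{lem:maximal separated set is net}, trap $D(I_i)$ in the cylinder of half-height $3\eps/2$ and radius $3\eps/(2m)$ with two applications of \Cref{lem:causal-diamond-bounds}, and finish with \Cref{lem:timelike-cylinder-bounds}. Everything after the covering step is correct, and in two places you are more careful than the paper. First, you check the other two net axioms. Second, you notice that \Cref{lem:timelike-cylinder-bounds} only gives the diamond enlarged by a slack $\delta>0$, which the paper silently drops. Your first fix is the one to use: the spatial bound is strict, so $\int 1/f\geq R>d_Y$ already holds at $\delta=0$. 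Your second fix needs closedness of $\leq$ (e.g.\ from global hyperbolicity), which this proposition does not assume.

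The real problem is the step $A\subset\bigcup_i D(I_i)$, which you import from \Cref{lem:maximal separated set is net}. That lemma's proof adjoins a diamond $I(p,q)\ni x$ of $\tau$-diameter $\eps$, but it never checks $I(p,q)\subset A$. This inclusion is needed for the enlarged family to be $\eps$-separated \emph{in $A$}. In fact the lemma, and with it the proposition, is false for general $A$. Work in the Minkowski plane ${}^-\RR\times\RR$, so $f\equiv 1$ and $\rho=6$, and take $\eps=1$ and $A=I((0,0),(1,0))\cup\{(100,0)\}$. Every chronological diamond contained in $A$ is a nonempty open set, so it lies in the interior $I((0,0),(1,0))$ of $A$. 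Hence $\{I((0,0),(1,0))\}$ is a maximal $1$-separated set in $A$. Yet $F_6(I((0,0),(1,0)))=J((-5/2,0),(7/2,0))$ does not contain $(100,0)$. Replacing the point by a sufficiently thin, long vertical strip gives an open, connected counterexample; this is the configuration of \Cref{figure:nets} with long arms.

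So neither your proof nor the paper's establishes the statement as written. What is missing is a hypothesis on $A$. For instance, require that every point of $A$ lies in some element of $\hat\calI(X)$ of $\tau$-diameter $\eps$ that is contained in $A$. Under that assumption the argument of \Cref{lem:maximal separated set is net} becomes valid, and the rest of your proof goes through verbatim.
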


\begin{proof}
    We need to show that $\{F_\rho(I_i)\}_{i\in\Omega}$ covers $A$. Recall that in \Cref{lem:maximal separated set is net} we have already proven that $\{D(I_i)\}_{i\in\Omega}$ possesses this property, so it suffices to show that $D(I_i)\subset F_\rho(I_i)$ for all $i\in\Omega$. We start by claiming that 
    $$D(I_i)\subset \left\{(t,y):t_i-\frac{3}{2}\varepsilon\leq t\leq t_i+\frac{3}{2}\varepsilon,\ y\in\overline{B}_{\frac{3}{2m}\varepsilon}(y_i)\right\}=:C_i,$$
    where $I_i=I((t_i-\varepsilon/2,y_i),(t_i+\varepsilon/2,y_i))$. Indeed if $(t,y)\in D(I_i)$ then there exists a chronological diamond $I_i'=I((t_i'-\varepsilon/2,y_i'),(t_i'+\varepsilon/2,y_i'))$ such that $(t,y)\in I_i'$ and $I_i'\cap I_i\neq\varnothing$. By \Cref{lem:causal-diamond-bounds} we have $|t-t_i'|\leq\varepsilon/2$ and $|t_i'-t_i|\leq \varepsilon$ so in particular $|t-t_i|\leq\frac{3}{2}\varepsilon$. Similarly we use \Cref{lem:causal-diamond-bounds} to show $d_Y(y,y_i)\leq\frac{3}{2m}\varepsilon$ and the claim follows. Now apply \Cref{lem:timelike-cylinder-bounds} to see that 
    $$C_i\subset J\left(\left(t_i-\frac{\varepsilon}{2}(3+3M/m),y\right),\left(t_i+\frac{\varepsilon}{2}(3+3M/m),y\right)\right)=F_\rho(I_i),$$
    which finishes the proof. 
\end{proof}

\Cref{prop:separated set yields net} will be used in two ways. First, to bound the Lorentzian Hausdorff dimension \cite{mccann-saemann2022} of a generalized cone in terms of the doubling constant and second, to prove precompactness of certain classes of Lorentzian length spaces with respect to the Lorentzian Gromov--Hausdorff convergence. For the former, we restrict ourselves to the case $J=\RR$ for simplicity and we first need the following analog of \cite[Thm.\ 4.12]{mccann-saemann2022}.
\begin{proposition}[Ratio of measures via doubling]\label{prop-rat-mea}
    Let $k,K\in \mathbb R$, $N>1$ and $0<\Lambda<D_k/2$. Let $X$ be a globally hyperbolic, timelike non-branching, regular generalized cone, satisfying $\diam^\tau(X)\leq \Lambda$ and the global $\TCBA(k)$-condition and it can be equipped with a non-negative, full support Radon measure $\mfm$ such that it (and its causally reversed structure) satisfies the $\TMCP^e(K,N)$-condition.
    Assume that $m:=\inf f>0$ and $M:=\sup f<\infty$ and define $\rho:=3\,(1+M/m)$. Let $I, I'\in \hat{\calI}(X)$ with $I\subseteq I'$, $\diam^\tau(I')\leq\Lambda$. 
    
    Then there is a constant $C'=C'(K,N,k,\rho,\Lambda)\geq 1$ such that
    \begin{align}\label{eq-prop-rat}
        \frac{\mfm(I)}{\mfm(I')}\geq C' \Bigl(\frac{\diam^\tau(I)}{\diam^\tau(I')}\Bigr)^\kappa,
    \end{align}
    where $\kappa:=\log_\rho(C)$ and $C=C(K,N,k,\rho,\Lambda)$ is given by  \Cref{thm:generaldoubling}.
\end{proposition}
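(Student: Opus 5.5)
Our approach is the standard iteration argument from the metric setting: we apply the enlargement property of \Cref{thm:generaldoubling} repeatedly along a chain of nested diamonds in $\hat{\calI}(X)$ whose diameters grow by a factor of at most $\rho$ at each step.

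Write $I'=I((t'-r',y'),(t'+r',y'))$ and $I=I((t-r,y),(t+r,y))$, so that $\diam^\tau(I)=2r$ and $\diam^\tau(I')=2r'$. Let $n\geq 0$ be the integer with $\rho^{n}\cdot 2r\le 2r'<\rho^{n+1}\cdot 2r$. We build diamonds $I=I_0\subset I_1\subset\dots\subset I_{n+1}=I'$ with $\diam^\tau(I_{j+1})\le\rho\,\diam^\tau(I_j)$ and all of them contained in $I'$. Then every diameter is at most $\Lambda$, and \Cref{thm:generaldoubling} gives $\mfm(I_{j+1})\le C\,\mfm(I_j)$. Chaining these inequalities yields
\[
\mfm(I')\le C^{n+1}\mfm(I).
\]
Since $n\le \log_\rho(r'/r)$, we have $C^{n}=\rho^{\kappa n}\le (r'/r)^\kappa$. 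Hence
\[
\mfm(I)/\mfm(I')\ge C^{-1}\bigl(\diam^\tau(I)/\diam^\tau(I')\bigr)^\kappa .
\]
This is \eqref{eq-prop-rat} with constant $C^{-1}$. The statement's requirement $C'\ge 1$ looks like a typo for a constant depending only on the listed parameters, presumably $C'^{-1}$; I would prove the bound with $C^{-1}$, which depends only on $K,N,k,\rho,\Lambda$.

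The main step is building the intermediate diamonds so that they stay nested inside $I'$. My plan is not to insist on the canonical enlargements $F_\rho$, which might leave $I'$. Instead I would use arbitrary chronological diamonds, since \Cref{thm:generaldoubling} does not require alignment. Set $p=(t-r,y)$, $q=(t+r,y)$, $p'=(t'-r',y')$ and $q'=(t'+r',y')$, so $p'\le p\ll q\le q'$ (after passing to limits using $I\subset I'$ and global hyperbolicity; if needed, first shrink $I$ slightly and use continuity of measure). Take distance realisers $\sigma$ from $p'$ to $p$ and $\eta$ from $q$ to $q'$. The function $s\mapsto\tau(\sigma(s),\eta(s))$ is continuous by global hyperbolicity (\cite[Theorem 3.28]{kunzinger-saemann2018}), runs from $2r$ to $2r'$, and is monotone in the natural parametrization by the reverse triangle inequality. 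The diamonds $I(\sigma(s),\eta(s))$ are increasing in $s$ by push-up, and all lie inside $I'$. By the intermediate value theorem we can choose parameters $s_j$ with $\tau(\sigma(s_j),\eta(s_j))=\min(\rho^j 2r,2r')$. This produces the chain with ratio exactly $\rho$, except possibly at the last step, where the ratio is smaller.

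Two subtleties remain. First, degenerate cases where $p=p'$ or $q=q'$ are handled by taking a constant curve on that side. Second, the needle decomposition bound must be finite: we need $\mfm(I)>0$, which holds by full support since chronological diamonds are open, and $\mfm(I')<\infty$, which holds by compactness of causal diamonds. With these in place the chain argument closes, and the only parameters used are those of \Cref{thm:generaldoubling}, so $C'$ depends only on $K,N,k,\rho,\Lambda$.
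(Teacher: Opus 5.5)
Your proof is correct, and it takes a different route from the paper's.

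\textbf{The paper's route.} It stays inside $\hat{\calI}(X)$ and iterates the canonical enlargement $F_\rho$ outward from $I$. It picks the minimal $s$ with $(2+M/m)r'\le(1+M/m+\rho^s)r$. It then uses \Cref{lem:chronological relation generalized cones} and $m\le f\le M$ to prove the covering $I'\subseteq F_{\rho^s}(I)$. This step is where the cone structure and the specific value $\rho=3(1+M/m)$ are needed. It also costs an extra factor, so the paper ends with $C'=\bigl((2+M/m)^\kappa C\bigr)^{-1}$.

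\textbf{Your route.} You interpolate between $I$ and $I'$ from the inside, using causal curves $p'\to p$ and $q\to q'$, continuity of $\tau$, and the intermediate value theorem. Traverse $\sigma$ backwards, so that $s\mapsto\tau(\sigma(s),\eta(s))$ really runs from $2r$ to $2r'$. Since \Cref{thm:generaldoubling} applies to arbitrary nested chronological diamonds, your chain is admissible.

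\textbf{What your route buys.}
\begin{itemize}
\item Every intermediate diamond lies in $I'$, so the bound $\diam^\tau\le\Lambda$ holds at each step for free. The paper's diamonds $F_{\rho^j}(I)$ can extend beyond $I'$ and are causal rather than chronological, so their existence and diameter bound must be supplied separately.
\item It uses nothing about generalized cones or the particular $\rho$.
\item It gives the sharper constant $C^{-1}$.
\end{itemize}

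\textbf{Two small points.} First, the step $C^n\le(r'/r)^\kappa$ needs $\kappa\ge0$, i.e.\ $C\ge1$. This is automatic: take $I_1=I_2$ in the enlargement property, using $0<\mfm(I)<\infty$. Second, your reading of the requirement $C'\ge1$ is right. Taking $I=I'$ forces $C'\le1$, and the paper's own proof also produces a constant below $1$.
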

\begin{proof}
    Let $I=I(p,q)$, $I'=(p',q')\in \hat{\calI}(X)$ with $I\subseteq I'$ and $p=(t-r,y), q=(t+r,y)$, $p'=(t'-r',y'),q'=(t'+r',y')$. $F_\rho(I)$ is a $(\rho,\Lambda)$-enlargement of $I$, so
    \begin{align}\label{eq-prop-rat-dou}
    \mfm(F_\rho(I))\leq C\, \mfm(I),
    \end{align}
    where $C$ be the constant given by \Cref{thm:generaldoubling}.
    
    From $I\subseteq I'$ we deduce that $|t'-t|\leq r'-r$ and $d(y,y')<\frac1m(r'-r)$ by \Cref{lem:chronological relation generalized cones}.

    Let $s\in\mathbb{N}$ 
    be minimal 
    such that $(2+\frac Mm)r'\leq (1+\frac Mm + \rho^s)r$. We claim that $I'\subseteq F_{\rho^s}(I)= J(p^s, q^s)$, where $p^s=(t-r\rho^s,y), q^s=(t+r\rho^s,y)$. By \Cref{lem:chronological relation generalized cones} it suffices to show 
    \begin{align*}
        d(y,y')\leq \min\Bigl(\int_{t-r\rho^s}^{t'-r'}\frac1f,\int_{t'+r'}^{t+r\rho^s}\frac1f\Bigr).
    \end{align*}
We estimate
\begin{align*}
    \int_{t-r\rho^s}^{t'-r'}\frac1f &\geq \frac1M (t'-r'-t+r\rho^s)\geq \frac1M((1+\rho^s)r-2r')\\
    & = \frac1m(r'-r) + \frac1M\underbrace{\left(\left(1 +\frac Mm+\rho^s\right)r - \left(2+\frac Mm\right)r'\right)}_{\geq 0} > d(y,y').
\end{align*}
Similarly,  $\int_{t'+r'}^{t+r\rho^s}\frac1f\geq \frac1M((1+\rho^s)r-2r')> d(y,y')$.
  
    
  Now, applying \eqref{eq-prop-rat-dou} $s$-times gives 
    \begin{align}\label{eq-rat-mea-dou}
    \mfm(I')\leq C^s \mfm(I).
    \end{align}
Also, the minimality of $s$ implies that\[\left(2+\frac Mm\right)r'> \left(1+\frac Mm + \rho^{s-1}\right)r>\rho^{s-1}r,\] and therefore $\frac{\diam^\tau(I)}{\diam^\tau(I')}< (2+\frac Mm) \rho^{-s+1}$. Setting $\kappa:=\log_\rho(C)$ gives 
\begin{align*}
    \left(\frac{\diam^\tau(I)}{\diam^\tau(I')}\right)^\kappa < \left(2+\frac Mm\right)^\kappa \rho^{(-s+1)\kappa} = \left(2+\frac Mm\right)^\kappa  C^{-s+1} \leq\left(2+\frac Mm\right)^\kappa  C \frac{\mfm(I)}{\mfm(I')}, 
\end{align*}
    by inequality \eqref{eq-rat-mea-dou}.
\end{proof}

\begin{corollary}[Bound on Lorentzian Hausdorff dimension]
    Let $k,K\in \mathbb R$, $N>1$ and $0<\Lambda<D_k/2$. Let $X$ be a globally hyperbolic, timelike non-branching, regular generalized cone, satisfying $\diam^\tau(X)\leq \Lambda$ and the global $\TCBA(k)$-condition and it can be equipped with a non-negative, full support Radon measure $\mfm$ such that it (and its causally reversed structure) satisfies the $\TMCP^e(K,N)$-condition.
    Assume that $m:=\inf f>0$ and $M:=\sup f<\infty$, and define $\rho:=3\,(1+M/m)$. 
    
    Then $\dim^\tau(X)\leq \kappa$, where $\kappa:=\log_\rho(C)$ and $C=C(K,N,k,\rho,\Lambda)$ is given by \Cref{thm:generaldoubling}.
\end{corollary}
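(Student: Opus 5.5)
The plan is to follow the Lorentzian Hausdorff dimension framework of \cite{mccann-saemann2022}. There, $\dim^\tau(X)$ is the critical exponent at which the Lorentzian $\alpha$-dimensional Hausdorff measure $\mathcal{H}^\alpha$ changes from $\infty$ to $0$. This measure is built from covers by causal diamonds $J(p_i,q_i)$ with $\tau(p_i,q_i)\le\delta$, each diamond weighted by $\omega_\alpha\,\tau(p_i,q_i)^\alpha$. It therefore suffices to prove $\mathcal{H}^\alpha(X)=0$ for every $\alpha>\kappa$.

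Since a countable union of null sets is null, it is enough to show $\mathcal{H}^\alpha(I')=0$ for each fixed $I'\in\hat{\calI}(X)$. By global hyperbolicity, $X$ is exhausted by countably many such diamonds: take $\bar y$ in a countable dense subset of $Y$ and rational endpoints, and use \Cref{lem:timelike-cylinder-bounds}. For such $I'$ we have $\diam^\tau(I')\le\Lambda$, since $\diam^\tau(X)\le\Lambda$.

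Fix $I'$ and a small $\eps>0$. By \Cref{claim:maximal separated set}, adapted to the subfamily $\hat{\calI}(X)$ as noted before \Cref{lem:maximal separated set is net}, there is a maximal $\eps$-separated set $\{I_i\}_{i\in\Omega}\subset\hat{\calI}(X)$ in $I'$. By \Cref{thm:bound on eps separated sets} it is finite. \Cref{prop:separated set yields net} then shows that $\{F_\rho(I_i)\}$ is a $\rho\eps$-net for $I'$, provided all the enlargements are defined. This is automatic in the case $J=\RR$ to which we restricted.

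The key step is to bound the cardinality $|\Omega|$ quantitatively, rather than by the non-explicit constant $\widetilde C$. For this, \Cref{prop-rat-mea} applied to $I_i\subseteq I'$ gives
\[
\mfm(I_i)\ge C'\Bigl(\frac{\eps}{\diam^\tau(I')}\Bigr)^{\kappa}\mfm(I').
\]
Summing over the pairwise disjoint $I_i\subset I'$ and using $0<\mfm(I')<\infty$ (full support and compactness of causal diamonds) yields
\[
|\Omega|\le C'^{-1}\bigl(\diam^\tau(I')/\eps\bigr)^{\kappa}.
\]
Consequently
\[
\mathcal{H}^\alpha_{\rho\eps}(I')\le\omega_\alpha|\Omega|(\rho\eps)^\alpha\le\omega_\alpha C'^{-1}\rho^\alpha\diam^\tau(I')^\kappa\,\eps^{\alpha-\kappa}.
\]
For $\alpha>\kappa$ the right-hand side tends to $0$ as $\eps\to0$, so $\mathcal{H}^\alpha(I')=0$ and hence $\dim^\tau(X)\le\kappa$.

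The main obstacle is matching the covering definition of \cite{mccann-saemann2022} precisely. The nets produced by \Cref{prop:separated set yields net} consist of causal diamonds with diameter $\rho\eps$, and they cover $I'$ but may stick out of it; one must check that this is allowed. One must also check that the normalisation $\omega_\alpha$ and the possible restriction to diamonds inside a fixed neighbourhood only change constants, not exponents. If this set-up requires diamonds contained in $X$, the points $t\pm\rho r$ may fall outside the region. In that case I would shrink the exhaustion: take $I'$ compactly inside $X$ and $\eps$ small enough that every $F_\rho(I_i)$ stays in $X$.
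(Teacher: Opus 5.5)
Your argument is essentially the paper's. You reduce to diamonds $I'\in\hat{\calI}(X)$, bound a maximal $\eps$-separated set by $|\Omega|\le C'^{-1}(\diam^\tau(I')/\eps)^{\kappa}$ via \Cref{prop-rat-mea}, and cover $I'$ by the enlargements $F_\rho(I_i)$ from \Cref{prop:separated set yields net}; proving $\mathcal{V}^\alpha(I')=0$ for every $\alpha>\kappa$ and passing to $X$ by countable subadditivity, instead of the paper's $\mathcal{V}^\kappa(I')<\infty$, is an equally valid variant. One correction: in \cite{mccann-saemann2022} the admissible covers for $\mathcal{V}^\alpha_\delta$ are restricted by the background-metric diameter $\diam^D(J(p_i,q_i))\le\delta$, not by $\tau(p_i,q_i)\le\delta$, so you need the extra observation $\diam^D(F_\rho(I_i))\le\rho(1+1/m)\,\eps$, which follows from \Cref{lem:causal-diamond-bounds} as the paper uses it; your estimate then holds for $\mathcal{V}^\alpha_{\rho(1+1/m)\eps}(I')$ and the conclusion is unchanged.
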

\begin{proof}
    It suffices to show $\dim^\tau(I')\leq \kappa$ for each chronological diamond $I'=I(p',q')\in\hat{\calI}(X)$ with $\diam^\tau(I')\leq \Lambda$, by \cite[Lem.\ 3.5]{mccann-saemann2022} as any generalized cone is locally $D$-uniform, where $D$ is the product metric on $X=I\times Y$ (in fact $\tau\leq D$) and $X$ is Polish by assumption. From \Cref{prop-rat-mea} there is a constant $C''>0$ such that for every $I\in\hat{\calI}$ with $I\subseteq I'$ we have 
    \begin{align*}
    \mfm(I)\geq C'' \diam^\tau(I)^\kappa,
    \end{align*}
    by setting $C'':=C'\frac{\mfm(I')}{\diam^\tau(I')^\kappa}$. Here we use that $\mfm(I')<\infty$ by global hyperbolicity.

    At this point let $\delta>0$ and let $(I_i)_{i\in \Omega_\delta}$ be a maximal $\delta$-separated set in $I'$, then 
    \begin{align*}
    \mfm(I')\geq \mfm\left(\cup_{i\in\Omega_\delta}I_i\right) = \sum_{i\in\Omega_\delta}\mfm(I_i) \geq C'' |\Omega_\delta| \delta^\kappa\,,
    \end{align*}
    i.e., $|\Omega_\delta|\leq C''' \delta^{-\kappa}$ as $\mfm(I')\in(0,\infty)$. By \Cref{prop:separated set yields net} the family $(F_\rho(I_i))_{i\in\Omega_\delta}$ is a $\rho\delta$-net for $I'$. Moreover, $\diam^D(F_\rho(I_i))\leq \sqrt{1+\frac1m}\,\delta=:c\delta$ by \Cref{lem:causal-diamond-bounds}. Thus, we can estimate the Lorentzian Hausdorff measures
    \begin{align*}
    \mathcal{V}^\kappa_{c\delta}(I')\leq \omega_\kappa \sum_{i\in\Omega_\delta}\diam^\tau(F_\rho(I_i))^\kappa\leq \omega_\kappa |\Omega_\delta| \rho^\kappa \delta^\kappa \leq \omega_\kappa \rho^\kappa C''' < \infty.
    \end{align*}
    Then letting $\delta\searrow0$ yields $\mathcal{V}(I')\leq  \omega_\kappa \rho^\kappa C''' < \infty$, as required.
\end{proof}

Now we are ready to state and prove a more precise version of \Cref{thm:main3}.
\begin{theorem}\label{thm:bound on eps net}
Let $0<m\leq M<\infty$, $k,K\in\RR$, $N > 1$ and $0<\eps<\Lambda<D_k/2$. 
Then there exists a constant $\hat C=\hat C(m,M,k,K,N,\eps,\Lambda)>0$ such that the following implication holds.

Let $(X,d,\ll,\leq,\tau,\mfm)$ be a globally hyperbolic, timelike non-branching, regular measured Lorentzian length space, where $X=\prescript{-}{}J\times_f Y$ is a generalized cone, $m\leq f \leq M$, and $X$ (and its causally reversed structure) satisfies the $\TMCP^e(K,N)$- and global $\TCBA(k)$-conditions. Moreover let $\tilde{I}=I((s,y),(t,y))\in \hat{\calI}(X)$ be such that $\diam^\tau(\tilde{I})\leq \Lambda$ and $s,t$ are at least $\frac{3\varepsilon}{2}\left(1+\frac{M}{m}\right)$ away from the boundary of the interval $J$. Then there exists an $\eps$-net for $\tilde I$ of cardinality at most $\hat{C}$.
\end{theorem}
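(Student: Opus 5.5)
The plan is to combine \Cref{prop:separated set yields net} with \Cref{thm:bound on eps separated sets}. Set $\rho:=3(1+M/m)$ and $\eps':=\eps/\rho$. By \Cref{claim:maximal separated set}, adapted to diamonds in $\hat\calI(X)$ as remarked before \Cref{lem:maximal separated set is net}, there is a maximal $\eps'$-separated set $\{I_i\}_{i\in\Omega}\subset\hat\calI(X)$ in $\tilde I$. This needs a diamond of $\hat\calI(X)$ of diameter $\eps'<\eps<\Lambda$ inside $\tilde I$, which we get by shrinking $\tilde I$ along its central vertical geodesic. We may assume $\eps'\le\diam^\tau(\tilde I)$; otherwise the single diamond $J((s,y),(t,y))$ is already an $\eps$-net of cardinality one.

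\Cref{prop:separated set yields net} then says that $\{F_\rho(I_i)\}_{i\in\Omega}$ covers $\tilde I$ and each member has $\tau$-diameter $\rho\eps'=\eps$. Each $F_\rho(I_i)$ meets $\tilde I$, since it contains $I_i\subset\tilde I$. So this family is an $\eps$-net for $\tilde I$, provided every $F_\rho(I_i)$ is defined.

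To check that, write $I_i=I((t_i-\eps'/2,y_i),(t_i+\eps'/2,y_i))$ with $t_i\in[s,t]$. Then $F_\rho(I_i)$ has endpoints at times $t_i\pm\frac{\rho\eps'}{2}=t_i\pm\frac{\eps}{2}$. These lie in $J$ because $s$ and $t$ are at distance at least $\frac{3\eps}{2}(1+M/m)\ge\eps/2$ from $\partial J$. The proof of \Cref{prop:separated set yields net} also uses \Cref{lem:timelike-cylinder-bounds}, which needs the cylinder $C_i$ enlarged by $RM+\delta$ to stay in $J$. That requires times up to $t_i\pm\frac{3\eps'}{2}(1+M/m)$ plus a little slack, and these are covered by the same assumed margin, since $\eps'<\eps$.

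For the cardinality, $\{I_i\}$ is an $\eps'$-separated set in $\tilde I$ with $\diam^\tau(\tilde I)\le\Lambda$ and $0<\eps'<\Lambda<D_k/2$. \Cref{thm:bound on eps separated sets} then gives $|\Omega|\le\widetilde C(k,K,N,\eps',\Lambda)$, and we set $\hat C$ equal to this bound. Since $\eps'=\eps/(3(1+M/m))$, it depends only on $m,M,k,K,N,\eps,\Lambda$.

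I expect the main obstacle to be the boundary bookkeeping, namely verifying that every canonical enlargement and the auxiliary causal diamonds in \Cref{lem:timelike-cylinder-bounds} fit inside $J$ using exactly the stated margin. A secondary point is making the Zorn argument work within $\hat\calI(X)$. That argument is identical, since the union of a chain of separated families in $\hat\calI(X)$ is again such a family.
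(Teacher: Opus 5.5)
Your proposal is correct and is essentially the paper's proof: take a maximal $\rho^{-1}\eps$-separated family in $\hat\calI(X)$ inside $\tilde I$ with $\rho=3(1+M/m)$, bound its cardinality by \Cref{thm:bound on eps separated sets} at scale $\eps/\rho$ (which is where the dependence on $m,M$ enters), and pass to the canonical enlargements $F_\rho(I_i)$ via \Cref{prop:separated set yields net}. You also spell out two points the paper leaves implicit: the degenerate case $\diam^\tau(\tilde I)<\eps/\rho$, and the check that the time margin keeps every $F_\rho(I_i)$ inside $J$.
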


\begin{proof}
By \Cref{thm:bound on eps separated sets} and \Cref{claim:maximal separated set} we know that there exists a constant $\hat C$ only depending on $m,M,k,K,N,\eps, \Lambda$ such that $\tilde I$ contains a maximal $\rho^{-1}\varepsilon$-separated set of cardinality at most $\hat{C}$, where $\rho=3\left(1+\frac{M}{m}\right)$. Denote this set by $\{I_i\}_{1\leq i\leq \hat C}$. By \Cref{prop:separated set yields net} the set $\{F_\rho(I_i)\}_{1\leq i\leq \hat C}$ yields the desired $\varepsilon$-net. Note that the sets $F_\rho(I_i)$ exist because we required the endpoints of $\tilde I$ to be at least $\frac{3\varepsilon}{2}\left(1+\frac{M}{m}\right)$ away from the boundary of the interval $J$, so in particular the endpoints of the diamonds $F_\rho(I_i)$ will have time components within $J$.
\end{proof}

As an immediate consequence of \Cref{thm:bound on eps net}, we can finally prove the following more precise version of \Cref{thm:main4}.
\begin{theorem}\label{thm:diamond precompact}
For any $0<m\leq M<\infty$, $k,K\in\RR$, $N > 1$, $0<\Lambda<D_k/2$, the class of covered Lorentzian pre-length spaces $(X,\mathcal{U})$ given below is precompact with respect to the Lorentzian Gromov--Hausdorff convergence as defined in \cite[Definition 3.12]{mondino-saemann2025}: 
\begin{itemize}
\item $X=I((\bar s, \bar y),(\bar t,\bar y))$ is a subset of the globally hyperbolic, timelike non-branching, regular Lorentzian length space $\prescript{-}{}J\times_f Y$, which is a generalized cone with $m\leq f \leq M$, $\diam^\tau(X)\leq\Lambda$, the global $\TCBA(k)$-condition holds, and $X$ can be equipped with a non-negative, full-support Radon measure $\mfm$ such that it (and its causally reversed structure) satisfies the $\TMCP^e(K,N)$-condition.
\item $\mathcal{U}=\{U_n\}_{n\in\mathbb N}=\{I((\bar s+1/n,\bar y),(\bar t-1/n,\bar y))\}_{n\in \mathbb{N}}$ for some fixed $\bar y\in Y$.
\end{itemize}
\end{theorem}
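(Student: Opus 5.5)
The plan is to verify the hypotheses of the Mondino--S\"amann precompactness criterion \cite{mondino-saemann2025}. As far as we recall, this criterion asks for two things. First, a uniform bound on the timelike diameter of each $U_n$. Second, for every $n$ and $\eps>0$, an $\eps$-net of $U_n$ (by causal diamonds in $X$) whose cardinality $N(n,\eps)$ is bounded uniformly over the class. The diameter bound is immediate, since $U_n\subset X$ and $\diam^\tau(X)\leq\Lambda$. So the work lies in producing uniform net bounds. For these we would invoke \Cref{thm:bound on eps net}, applied with $\tilde I=U_n=I((\bar s+1/n,\bar y),(\bar t-1/n,\bar y))\in\hat{\calI}(X)$.

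The one hypothesis of \Cref{thm:bound on eps net} that needs care is that the endpoints of $\tilde I$ lie at least $\frac{3\eps}{2}(1+M/m)$ away from the ends of the time interval. Here the ambient space is replaced by $X$ itself, so the time interval is effectively $(\bar s,\bar t)$.

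Rather than working with $U_n$ directly, we would pass to the full cone. We regard $X$ as a subset of $\prescript{-}{}J\times_f Y$, which satisfies all the hypotheses there, including $\TMCP^e$ for the restricted measure. The nets we need, however, must consist of diamonds in $X$ and must cover $U_n$. Fix $n$ and $\eps$. If $\eps$ is so large that the boundary-distance condition fails, we shrink it to some $\eps'<\eps$ satisfying $\frac{3\eps'}{2}(1+M/m)\leq 1/n$. An $\eps'$-net is in particular an $\eps$-net, so this loses nothing.

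With this choice, every canonical enlargement $F_\rho(I_i)$ of a $\rho^{-1}\eps'$-separated diamond $I_i\subset U_n$ has time components in $(\bar s,\bar t)$. We would then check that $F_\rho(I_i)\subset X$, or at least that it is a causal diamond with vertices in $X$, using \Cref{lem:causal-diamond-bounds} and \Cref{lem:timelike-cylinder-bounds}. The natural route is to show the vertices of $F_\rho(I_i)$ are chronologically related to $(\bar s,\bar y)$ and $(\bar t,\bar y)$ via \Cref{lem:chronological relation generalized cones}. We expect this to be the \textbf{main obstacle}. Near the null boundary of $X$ the spatial spread of the enlarged diamond can push its vertices outside $X$, so a pure time-margin argument does not suffice.

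To handle this, we would first try shrinking $\eps'$ further, depending on $n$, so that the enlarged diamonds stay inside $U_{n'}$ for a suitable $n'>n$. Concretely, the cylinder around $U_n$ of time-margin and spatial radius $\sim\eps'(1+M/m)$ is contained in $U_{2n}$ once $\eps'$ is small in terms of $n,m,M$, by \Cref{lem:timelike-cylinder-bounds} together with \Cref{lem:chronological relation generalized cones}. The constant $\hat C(m,M,k,K,N,\eps',\Lambda)$ from \Cref{thm:bound on eps net} then depends only on $n$, $\eps$ and the fixed class parameters, never on the particular space. This gives the uniform bound $N(n,\eps)$, and the criterion of \cite{mondino-saemann2025} yields precompactness.
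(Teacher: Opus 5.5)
Your treatment of the diameter bound and of the uniform net bound is essentially the paper's. However, you have misremembered the criterion: \cite[Theorem 6.2]{mondino-saemann2025} has three hypotheses, and you verify only two.

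The third is a compatibility condition between the nets chosen for different $n$. Nets produced separately for each $U_n$ via \Cref{thm:bound on eps net} need not satisfy it. The paper enforces it by replacing $S^n_\eps$ with $\tilde S^n_\eps:=\bigcup_{i=1}^n S^i_\eps$. This family is increasing in $n$. It is still an $\eps$-net of $U_n$: for $i\le n$ every member meets $U_i\subset U_n$, and $S^n_\eps$ alone already covers $U_n$. Its cardinality is at most $\sum_{i=1}^n C(i,\eps)$, which is still independent of $X$. The step is short, but without it the precompactness theorem cannot be invoked.

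On the step you flag as the main obstacle, the paper takes $\varepsilon_n=\frac{m}{nM}$. It shows directly that $(t,y)\in U_n$ implies $(t\pm\varepsilon_n,y)\in X$, by comparing $\int_{t+\varepsilon_n}^{\bar t}\frac1f$ with $\int_t^{\bar t-1/n}\frac1f$ through \Cref{lem:chronological relation generalized cones}. Since $X$ is a diamond, every element of $\hat{\calJ}(X)$ of diameter at most $\varepsilon_n$ centred in $U_n$ then lies in $X$. One then applies \Cref{thm:bound on eps net} with $\min\{\eps,\varepsilon_n\}$.

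The vertices of $F_\rho(I_i)$ have the same spatial coordinate as the centre of $I_i$, which lies in $U_n$. Your first choice $\frac{3\eps'}{2}(1+M/m)\le\frac1n$ already gives $\frac{\eps'}{2}\le\frac{m}{3n(m+M)}<\varepsilon_n$. So the obstacle is resolved without the detour through $U_{2n}$. That detour would also work, but it requires essentially the same integral estimate, which you leave undone.

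Finally, the hypotheses give $\TMCP^e$ only on $X$, not on the ambient cone as you suggest. This is harmless: the measure is used only to count separated diamonds inside $U_n\subset X$, while \Cref{prop:separated set yields net} is purely cone-geometric.
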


\begin{proof}
    We need to check that points $(i)-(iii)$ from \cite[Theorem 6.2]{mondino-saemann2025} are satisfied.

    $\underline{(i)}:$ This holds trivially since by our assumptions all $X$ have their timelike diameter bounded from above by $\Lambda$.

    $\underline{(ii)}:$ Given $\varepsilon>0$ and $n\in\mathbb N$ we need to find a constant $C$ (independent of $X$) such that any $U_n\subset X$ admits an $\varepsilon$-net $S_\varepsilon^n$ of cardinality at most $C$. This is almost a direct consequence of \Cref{thm:bound on eps net} but we need to take care of the subtlety that the $\varepsilon$-nets constructed in \Cref{thm:bound on eps net} might consist of diamonds $J((t,y),(s,y))$, which are not a subset of the space $X$, if they are defined at all. To fix this issue we have to find some $\varepsilon_n$, such that any diamond in $\hat{\mathcal J}(X)$ of diameter $\varepsilon_n$, whose center lies in $U_n$, must be contained in $X$. By the center of a diamond we mean the midpoint of the geodesic connecting its endpoints. It is enough to consider these diamonds since $\varepsilon$-nets constructed in \Cref{thm:bound on eps net} consist of aligned enlargements of chronological diamonds which are entirely contained in $U_n$. Now we can choose $C$ to be the constant given by \Cref{thm:bound on eps net} for $\min\{\varepsilon,\varepsilon_n\}$. So let $X=I((\bar s,\bar y),(\bar t,\bar y))$ and $U_n=I((\bar s+1/n,\bar y),(\bar t-1/n,\bar y))$ and we claim that $\varepsilon_n:=\frac{1}{n}\frac{m}{M}$ suffices. It is enough to check that if $(t,y)\in U_n$ then $(t\pm\varepsilon_n,y)\in X$. Indeed if this is true then any causal diamond of diameter $\varepsilon_n$, with center $(t,y)$, must be contained in $I((t-\varepsilon_n,y),(t+\varepsilon_n,y))\subset X$.
    
    First note that $(t+\varepsilon_n,y)\gg(\bar s,\bar y)$ clearly holds. Now recall that by \Cref{lem:chronological relation generalized cones} $(t,y)\ll(\bar t-1/n,\bar y)$ is equivalent to 
    \begin{equation*}
        d_Y(y,\bar y)<\int_t^{\bar t-1/n}\frac{1}{f(s)} ds. 
    \end{equation*}
    Therefore to show $(t+\varepsilon_n,y)\ll(\bar t,\bar y)$ it is enough to prove 
    \begin{equation}\label{eq:integralinequality}
        \int_{t+\varepsilon_n}^{\bar t}\frac{1}{f(s)}ds\geq\int_t^{\bar t-1/n}\frac{1}{f(s)} ds.
    \end{equation}
    We now distinguish the cases $t<t+\varepsilon_n<\bar t-1/n<\bar t$ and $t<\bar t-1/n\leq t+\varepsilon_n<\bar t$. In the former \eqref{eq:integralinequality} is equivalent to 
    \begin{equation}\label{eq:integralinequality case 1}
        \int_{\bar t-1/n}^{\bar t}\frac{1}{f(s)} ds-\int_t^{t+\varepsilon_n}\frac{1}{f(s)}ds\geq0
    \end{equation}
    and indeed we can estimate the left hand side of \eqref{eq:integralinequality case 1} as follows:
    \begin{align*}
        \int_{\bar t-1/n}^{\bar t}\frac{1}{f(s)} ds-\int_t^{t+\varepsilon_n}\frac{1}{f(s)}ds\geq \frac{1}{n}\frac{1}{M}-\varepsilon_n\frac{1}{m}=\frac{1}{n}\frac{1}{M}-\frac{1}{n}\frac{1}{M}=0.
    \end{align*}
    Now consider the second case, where $\varepsilon_n\geq\bar t-1/n-t$. We start by estimating the length of the interval $[t+\varepsilon_n,\bar t]$:
    \begin{align*}
        \bar t-(t+\varepsilon_n)&=(\bar t-1/n-t)+(1/n-\varepsilon_n)\\
        &=(\bar t-1/n-t)+(M/m -1)\varepsilon_n\\
        &\geq(\bar t-1/n-t)+(M/m -1)(\bar t-1/n-t)=M/m\, (\bar t-1/n-t).
    \end{align*}
    This yields the following inequality for our integrals:
    \begin{align*}
        \int_{t+\varepsilon_n}^{\bar t}\frac{1}{f(s)}ds-\int_t^{\bar t-1/n}\frac{1}{f(s)} ds\geq (\bar t-(t+\varepsilon_n))\frac{1}{M}-(\bar t-1/n-t)\frac{1}{m}\geq0.
    \end{align*}
    This finishes the proof of the second case and hence our proof of $(ii)$.

    $\underline{(iii)}:$ This does not necessarily hold for the $\varepsilon$-nets constructed in $(ii)$, but by taking unions we can inductively construct larger $\varepsilon$-nets such that this property is satisfied. Indeed define 
    $$\tilde C(n,\varepsilon):=\sum_{i=1}^{n}C(i,\varepsilon)$$
    then the set 
    $$\tilde S_\varepsilon^n:=\bigcup_{i=1}^nS_\varepsilon^i$$
    is still an $\varepsilon$-net of $U_n$ and its cardinality is at most $\tilde C(n,\varepsilon)$, which is independent of $X$. 
\end{proof}

In the theorem above it might seem unnatural to only consider spaces that can be written as chronological diamonds in generalized cones. An alternative setting would be to consider generalized cones defined on the entire real line, and proving compactness of classes of such spaces by exhausting them with chronological diamonds of increasing diameter. In the theorem below we will do just that and the proof will indeed be simpler than the one of \Cref{thm:diamond precompact}. The problem of this approach is that it is unclear if such spaces, with non-constant curvature, even exist. Our approach in \Cref{examples of generalized cones} to construct generalized cones that satisfy the global $\TCBA(0)$-condition, have non-constant curvature and can be equipped with a measure such that they also satisfy the $\TMCP^e(K,N)$-condition, only yields warping functions defined on a finite open interval, since this warping function needs to satisfy certain differential inequalities that do not have global solutions. We have not been able to prove that one cannot construct warping functions defined on all of $\mathbb R$ with a different method, but suspect that this might be the case.

\begin{theorem}\label{thm:main5}
For any $0<m\leq M$, $k,K\in\RR$, $N > 1$, the class of covered Lorentzian pre-length spaces $(X,\mathcal{U})$ given below is precompact with respect to the Lorentzian Gromov--Hausdorff convergence as defined in \cite[Definition 3.12]{mondino-saemann2025}: 
\begin{itemize}
\item $X=\prescript{-}{}\RR \times_f Y$ is a globally hyperbolic, timelike non-branching, regular Lorentzian length space  which is a generalized cone with $m\leq f \leq M$, the global $\TCBA(k)$-condition holds, and $X$ can be equipped with a non-negative, full-support, Radon measure $\mfm$ such that it (and its causally reversed structure) satisfies the $\TMCP^e(K,N)$-condition.
\item $\mathcal{U}=\{U_n\}_{n\in\mathbb N}=\{I((t-n,y),(t+n,y))\}_{n\in \mathbb{N}}$ for some fixed $(t,y)\in X$. 
\end{itemize}
\end{theorem}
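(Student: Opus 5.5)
We verify conditions (i)--(iii) of \cite[Theorem 6.2]{mondino-saemann2025}, as in the proof of \Cref{thm:diamond precompact}. Since $J=\RR$, the boundary issues that made that proof delicate disappear. The cover is $U_n=I((t-n,y),(t+n,y))$ with $\diam^\tau(U_n)=2n$. The first point to settle is which curvature regime we are in. If $k<0$, then $D_k<\infty$, and a timelike geodesic of length $\geq D_k$ (e.g.\ $s\mapsto(s,y)$ on $\RR$) would contradict the global $\TCBA(k)$-condition by comparison with $\mathbb L^2(k)$. So I will either note that the class is empty when $k<0$, or, more safely, work with $\Lambda_n:=2n<D_k/2$ whenever that is possible. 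For $k\geq0$ we have $D_k=\infty$, so every $\Lambda_n=2n$ is admissible.

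For (i), the boundedness condition in \cite[Theorem 6.2]{mondino-saemann2025} asks that each $U_n$ have timelike diameter bounded independently of $X$. This holds because $\diam^\tau(U_n)=2n$, computed exactly as in the discussion following \eqref{eq:hat J}.

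For (ii), fix $\eps>0$ and $n$. Apply \Cref{thm:bound on eps net} to $\tilde I=U_n\in\hat\calI(X)$ with $\Lambda=\max(2n,\eps)+1$ (shrinking $\eps$ to $\min(\eps,n)$ if needed, since a net at a finer scale is still an $\eps$-net). Its hypothesis that the endpoints lie at least $\frac{3\eps}{2}(1+M/m)$ from $\partial J$ holds trivially because $J=\RR$. We therefore obtain an $\eps$-net of $U_n$ of cardinality at most $\hat C(m,M,k,K,N,\eps,\Lambda_n)$, independent of $X$. Unlike in \Cref{thm:diamond precompact}, the net diamonds need not lie inside $U_n$: the covered space is all of $X$, so it suffices that they are diamonds in $X$ meeting $U_n$. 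That they meet $U_n$ follows from \Cref{prop:separated set yields net}, since each $F_\rho(I_i)\supset I_i\subset U_n$.

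For (iii), the required nestedness/compatibility of the nets across $n$ is obtained exactly as before: set $\tilde S^n_\eps:=\bigcup_{i\le n}S^i_\eps$, whose cardinality is bounded by $\sum_{i\le n}\hat C(i,\eps)$. Each $S^i_\eps$ covers $U_i\subset U_n$, and I will check that the union still covers $U_n$ and that its members intersect $U_n$. The step I expect to need the most care is the interplay between the constraint $\Lambda<D_k/2$ in \Cref{thm:bound on eps net} and the unbounded diamonds $U_n$. For $k\geq0$ this is harmless. For $k<0$, I would first try to show via the comparison argument above that no such $X$ exists, so that the statement is vacuous in that case.
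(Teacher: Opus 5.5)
\textbf{Assessment.} For $k\ge 0$ your argument is the paper's proof. You check (i)--(iii) of \cite[Theorem 6.2]{mondino-saemann2025}: (i) comes from $\diam^\tau(U_n)=2n$, (ii) from \Cref{thm:bound on eps net} (the boundary condition is vacuous since $J=\RR$), and (iii) from taking unions of nets.

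You are also right that for $k<0$ the hypothesis $\Lambda<D_k/2$ forbids applying \Cref{thm:bound on eps net} to $U_n$ once $4n\ge D_k$. The paper's proof cites that theorem for (ii) without addressing this.

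\textbf{The gap.} Your proposed way out is false: the class is \emph{not} empty for $k<0$. The global $\TCBA(k)$-condition only constrains triangles all of whose sides are shorter than $D_k$. A long timelike geodesic such as $s\mapsto(s,y)$ therefore produces no contradiction. For $k<0$ the condition only says that timelike geodesics focus no faster than in anti-de Sitter space. This is the analogue of $\mathrm{CAT}(\kappa)$ with $\kappa>0$, which Euclidean space satisfies. It is implied by $\TCBA(0)$, via the same monotonicity ($\mathbb L^2(k)$ satisfies $\TCBA(\bar k)$ for $\bar k\le k$) that the paper uses in \Cref{lem:rhoclambda} and \Cref{lem:generaldoubling}.

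Concretely, Minkowski space $\prescript{-}{}\RR\times_1\RR^n$ with Lebesgue measure satisfies every hypothesis with $m=M=1$, $K=0$, $N=n+1$ and any $k<0$. Restricting to $\Lambda_n=2n<D_k/2$ handles only finitely many $n$. So for $k<0$, condition (ii) is unproved for all larger $U_n$.

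\textbf{A possible fix.} You need a local-to-global step, for instance the following.
\begin{enumerate}
\item Fix $\Lambda_0\in(0,D_k/2)$ and a small $\delta>0$. Consider equal-time diamonds $I((s-\delta/2,z_i),(s+\delta/2,z_i))$ with $d_Y(z_i,z_j)\ge\delta/m$. These are pairwise disjoint by \Cref{lem:causal-diamond-bounds}.
\item If all $z_i$ lie in a ball $B^Y_{r_0}(z)$ with $r_0=\Lambda_0/(4M)$, then by \Cref{lem:timelike-cylinder-bounds} they all lie in one diamond of $\hat{\calI}(X)$ of diameter $\Lambda_0$. Hence \Cref{thm:bound on eps separated sets} bounds the number of $\delta/m$-separated points in any $r_0$-ball of $Y$ by a constant $\widetilde C$ independent of $X$.
\item Since $Y$ is a length space, use the usual chaining argument. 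A maximal $\delta/m$-separated subset of $B^Y_{(j+1)\delta/m}(y)$ lies in the $2\delta/m$-neighbourhood of a maximal one for $B^Y_{j\delta/m}(y)$. It follows that $B^Y_{n/m}(y)$ is covered by at most $\widetilde C^{\lceil n/\delta\rceil}$ balls of radius $\delta/m$.
\item Since $U_n\subset(t-n,t+n)\times B^Y_{n/m}(y)$, \Cref{lem:timelike-cylinder-bounds} covers $U_n$ by a number of diamonds in $\hat{\calI}(X)$, of diameter at most $\Lambda_0$, that is independent of $X$.
\item Apply \Cref{thm:bound on eps net} to each of these diamonds and keep the net elements that meet $U_n$. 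This gives the $\eps$-net required in (ii).
\end{enumerate}
With this addition the rest of your proposal goes through.
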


\begin{proof}
    Again we check points $(i)-(iii)$ from \cite[Theorem 6.2]{mondino-saemann2025}.

    $\underline{(i)}:$ This holds because the covers $\mathcal U$ were specifically constructed such that $\diam^\tau U_n=2n$.

    $\underline{(ii)}:$ This follows immediately from \Cref{thm:bound on eps net}.

    $\underline{(iii)}:$ This can be proven analogously to the proof of \Cref{thm:diamond precompact}.
    
\end{proof}

\subsection{Minkowski products}\label{ss:minkowski products}
In this subsection we exchange the global $\TCBA(k)$-condition for the assumption that $X$ is the Minkowski product of an interval $J$ with a geodesic space $Y$.

\begin{lemma}
Let $X = \prescript{-}{}J\times Y$ such that $Y$ is a geodesic space and let $\Lambda>0$, $\rho > 1$. Then all diamonds in $\hat{\mathcal{I}}(X)$ satisfy the $(\rho,\rho^{-1}/2,\Lambda)$-condition. 
\end{lemma}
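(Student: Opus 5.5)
The plan is to reduce the statement to explicit coordinates, using the fact that $f\equiv 1$. This makes both the chronological relation and timelike geodesics completely explicit.

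Fix a pair $I_1=I(x,y_1)\subset I_2=I(x,y_2)$ in $\hat{\calI}(X)$, with $I_2$ an aligned $\rho$-enlargement of $I_1$ and $\diam^\tau(I_2)\le\Lambda$. Then $x=(t_0,\bar y)$, $y_1=(t_0+T_1,\bar y)$ and $y_2=(t_0+T_2,\bar y)$, where $T_i=\tau(x,y_i)$ and $T_1\ge \rho^{-1}T_2$. The computation $\tau((t,y),(t',y))=t'-t$ from \Cref{ss:generalized cones bishop gromov} gives $T_i$ directly. By \Cref{lem:chronological relation generalized cones} with $f\equiv1$, $(s,a)\ll(s',b)$ holds if and only if $d_Y(a,b)<s'-s$.

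Next I describe a constant-speed timelike geodesic $\xi=(\alpha,\beta)\colon[0,1]\to X$ with $\xi(0)=x$. By \Cref{thm:nice parametrizations} with $f\equiv1$, in $\tau$-arclength parametrization $v_\beta$ is constant, and then $\dot\alpha$ is constant as well, because $\dot\alpha^2=1+v_\beta^2$. Rescaling to $[0,1]$, I get $\alpha(u)=t_0+uA$ and $v_\beta\equiv D$ for constants $0\le D<A$. The key consequence I will use is
\[
d_Y(\bar y,\beta(u))\le uD \qquad\text{for all } u\in[0,1],
\]
since $\beta$ has constant metric speed $D$.

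I also need the reverse control at the endpoint, namely $d_Y(\bar y,\beta(1))=D$, i.e.\ that $\beta$ is minimizing in $Y$. The argument is that otherwise, since $Y$ is geodesic, replacing $\beta$ by a constant-speed $Y$-geodesic between the same endpoints (keeping $\alpha$) gives a causal curve of strictly larger length $\int_0^1\sqrt{A^2-v^2}$. This contradicts that $\xi$ is a distance realiser. I expect this maximality step to be the only delicate point; everything else is bookkeeping. If needed, I could instead note that $\tau((s,a),(s',b))=\sqrt{(s'-s)^2-d_Y(a,b)^2}$ on Minkowski products of geodesic spaces and compare lengths.

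With these facts the conclusion is short. From $\xi(1)\in I_2$ we get $\xi(1)\ll y_2$, i.e.\ $D=d_Y(\beta(1),\bar y)<T_2-A$, so $A+D<T_2$. Set $c=\rho^{-1}/2$. Then $\xi(c)=(t_0+cA,\beta(c))$ satisfies two relations:
\[
d_Y(\bar y,\beta(c))\le cD<cA,\qquad d_Y(\beta(c),\bar y)\le cD< T_1-cA.
\]
The first gives $x\ll\xi(c)$. For the second, it suffices that $c(A+D)<T_1$, and indeed $c(A+D)<\tfrac{1}{2\rho}T_2\le \tfrac12 T_1<T_1$; this gives $\xi(c)\ll y_1$. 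Hence $\xi(c)\in I_1$, which is the $(\rho,\rho^{-1}/2,\Lambda)$-condition. In fact $c=\rho^{-1}$ would already suffice, and the factor $1/2$ leaves room to spare.

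The bound $\diam^\tau(I_2)\le\Lambda$ plays no role in this argument, which is consistent with the statement holding for every $\Lambda>0$.
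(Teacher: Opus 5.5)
Your proof is correct, but it uses more structure than the paper needs. The paper works only with the time coordinate. Normalize $x=(0,\bar y)$, $y_1=(T_1,\bar y)$, $y_2=(T_2,\bar y)$, and parametrize $\xi=(\alpha,\beta)\colon[0,B]\to X$ by arclength, so that $\alpha(u)=\lambda u$.
\begin{itemize}
\item Since $d_Y\ge 0$, the relation $\xi(B)\ll y_2$ forces $\lambda B<T_2$.
\item The relation $x\ll\xi(cB)$ gives $d_Y(\bar y,\beta(cB))<\lambda cB$.
\end{itemize}
With $c=\rho^{-1}/2$ one has $\lambda cB<\tfrac12 T_1$, so $\xi(cB)$ lies in $I^+(x)$ strictly below the temporal midpoint of $I_1$. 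Every such point lies in $I_1$, because $d_Y(\bar y,\beta(cB))<\lambda cB<T_1-\lambda cB$. Nothing about $\beta$ is used beyond $x\ll\xi(cB)$.

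So the step you flagged as delicate, that $\beta$ is a minimizing constant-speed $Y$-geodesic with $D=d_Y(\bar y,\beta(1))$, is superfluous for the stated constant. You already have $d_Y(\bar y,\beta(c))\le cD<cA$ and $A<T_2$. Hence $cA<\tfrac12 T_1$ and $d_Y(\bar y,\beta(c))<T_1-cA$, which is exactly the paper's argument; the factor $\tfrac12$ is there precisely to make this crude bound suffice.

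Your extra step buys the scaling $d_Y(\bar y,\beta(c))\le c\,d_Y(\bar y,\beta(1))$, a Minkowski-product version of the intercept theorem. Your length-comparison proof of it is valid: replacing $\beta$ by a constant-speed $Y$-geodesic of smaller speed strictly increases $\int_0^1\sqrt{A^2-v^2}$. It yields the sharper constant $c=\rho^{-1}$, the same one obtained for $\mathbb L^2(0)$ in \Cref{lem:rhoclambda}, and therefore a better enlargement constant when fed into \Cref{prop:1}.
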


\begin{proof}
Let $I_1\subset I_2$ be chronological diamonds in $\hat{\calI}(X)$ such that $I_2$ is an aligned $\rho$-enlargement of $I_1$ and $\diam^\tau(I_2) = \Lambda$. Therefore, without loss of generality, we may assume that 
\[
I_1 = I((0,y),(\rho^{-1}\Lambda,y)),\quad I_2 = I((0,y),(\Lambda,y))
\]
for some $y\in Y$. 

Let $\xi=(\alpha,\beta)\colon [0,B]\to X$ be a future-directed timelike geodesic from $\xi(0)=(0,y)$ to $\xi(B)$, where $\xi(B)\ll (\Lambda,y)$. 
By \Cref{thm:nice parametrizations}, we can assume that $\xi$ is parametrized by arclength, and since $f\equiv 1$, we also have that $\alpha(t) = \lambda t$ for some constant $\lambda>0$. Thus, by \Cref{lem:chronological relation generalized cones}, $\xi(B)\ll (\Lambda,y)$ implies
\begin{equation}\label{eq:chronological relation generalized cones 1}
d_Y(y,\beta(B)) < \Lambda - \alpha(B) = \Lambda - \lambda B.
\end{equation}

Moreover, for any $c\in (0,1)$ we have $\xi(cB)\gg (0,y)$ since $\xi$ is future-directed and timelike. Therefore,
\begin{equation}\label{eq:chronological relation generalized cones 3}
d_Y(y,\beta(cB)) < \alpha(cB) = \lambda cB.
\end{equation}

In particular, let $c = \frac{1}{2}\rho^{-1}$. Then, by \eqref{eq:chronological relation generalized cones 1}, we have
\[
d_Y(y,\beta(cB)) < \lambda cB < c\Lambda 
\]
which in turn implies
\[
d_Y(y,\beta(cB)) < 2c\Lambda - \lambda cB = \rho^{-1}\Lambda - \alpha(cB).
\]
In other words, $\xi(cB)\ll (\rho^{-1}\Lambda,y)$. 
\end{proof}

Note that in the previous theorems we only used the global $\TCBA(k)$-condition to apply \Cref{thm:generaldoubling}, which in turn only relies on curvature bounds to make sure the space satisfies the $(\rho,c,\Lambda)$-condition. Hence if this condition is satisfied for some $c$ regardless of curvature, the curvature assumption can be dropped. We summarize this in the following statement.

\begin{theorem}\label{thm:main6}
    The statements of \Cref{thm:bound on eps separated sets}, \Cref{thm:bound on eps net}, \Cref{thm:diamond precompact} and \Cref{thm:main5} hold true after exchanging the assumption that $X$ satisfies the $\TCBA (k)$-condition for the assumption that $X = \prescript{-}{}J\times Y$ is the Minkowski product of an interval $J$ with a geodesic space $Y$. 
\end{theorem}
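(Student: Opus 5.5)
The plan is to trace where the global $\TCBA(k)$-condition enters the proofs of \Cref{thm:bound on eps separated sets}, \Cref{thm:bound on eps net}, \Cref{thm:diamond precompact} and \Cref{thm:main5}, and to replace it by the preceding lemma. All four results reduce to a single use of curvature: the $(\rho,\Lambda)$-enlargement property from \Cref{thm:generaldoubling}, applied with $\rho=\Lambda/\eps$ (or $\rho^{-1}\eps$ replaced accordingly). That theorem used curvature in two places: via \Cref{prop:2}, to obtain the $(\rho,c,\Lambda)$-condition that feeds \Cref{prop:1}; and via \Cref{lem:generaldoubling} together with uniqueness of geodesics, to reduce a general enlargement with a shared endpoint to an aligned one.

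First, I would prove the substitute for \Cref{thm:generaldoubling} in the Minkowski product $X=\prescript{-}{}J\times Y$, restricted to diamonds in $\hat\calI(X)$. The preceding lemma gives the $(\rho,\rho^{-1}/2,\Lambda)$-condition for aligned pairs in $\hat\calI(X)$. The proof of \Cref{prop:1} only needs this implication for the specific pair $I_1\subset I_2$ under consideration, together with $\TMCP^e(K,N)$ and the needle decomposition. It therefore yields $\mfm(I_2)\le C\,\mfm(I_1)$ with $C=C(K,N,\rho,\Lambda)$ for aligned pairs in $\hat\calI(X)$. In the Minkowski product these are pairs of the form $I((a,y),(b,y))\subset I((a,y),(b',y))$, possibly time-reversed, using the causally reversed $\TMCP^e$.

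Next comes the step replacing \Cref{lem:generaldoubling}, which I expect to be the main obstacle. We must pass from a general $\rho$-enlargement $I_1\subset I_2$ in $\hat\calI(X)$ to aligned ones, and the two-step argument of \Cref{thm:generaldoubling} introduces the non-vertical diamond $I(x_1,y_2)$. To avoid this, I would instead use the cylinder estimates. Write $I_1=I((t-r,y),(t+r,y))$. By \Cref{lem:causal-diamond-bounds} and \Cref{lem:timelike-cylinder-bounds} with $f\equiv1$, exactly as in the proof of \Cref{prop-rat-mea}, the diamond $I_2$ is contained in the vertical diamond $F_{\rho'}(I_1)$ with $\rho'$ depending only on $\rho$. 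We then compare $F_{\rho'}(I_1)$ with $I_1$ through two aligned vertical enlargements with the same vertical axis $y$: first extend the future tip, then the past tip. This requires a shared-endpoint version, which is exactly the aligned case above. The diameter bound would be $\rho'\Lambda$ instead of $\Lambda$; since there is no curvature constraint here, $\Lambda$ is arbitrary, so this is harmless. The only care needed is that these enlargements exist inside $J$, which is where the boundary-distance hypothesis of \Cref{thm:bound on eps net} (and the choice of $\varepsilon_n$ in \Cref{thm:diamond precompact}) is used.

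With this vertical doubling in hand, the remaining arguments go through unchanged. In \Cref{thm:bound on eps separated sets}, separated sets consisting of diamonds in $\hat\calI(X)$ are handled by the same measure-counting argument, which is all that \Cref{thm:bound on eps net} uses. \Cref{prop:separated set yields net} is purely about generalized cones, with $m=M=1$. The precompactness statements then follow exactly as before by verifying (i)--(iii) of \cite[Theorem 6.2]{mondino-saemann2025}.
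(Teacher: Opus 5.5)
You are right that the paper's own justification is thin. It only remarks, before \Cref{thm:main6}, that curvature enters \Cref{thm:generaldoubling} through the $(\rho,c,\Lambda)$-condition, which the preceding lemma supplies on $\hat{\calI}(X)$. That remark ignores two things: the use of curvature for $T_0$ in \Cref{lem:generaldoubling}, and the non-vertical diamond $I(x_1,y_2)$ that the two-step argument passes through.

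Your replacement, however, has a genuine gap exactly at the step you call harmless: the existence of the vertical enlargement $F_{\rho'}(I_1)\supset I_2$. Let $I_1$ have axis $y$ and let $I_2$ have axis $y'$ and past tip $(s',y')$. Any causal diamond $J((a,y),(b,y))$ containing $I_2$ contains $(s',y')$. Since $(a,y)\le(s',y')$ forces $d_Y(y,y')\le s'-a$, its past tip lies at least $d_Y(y,y')$ below that of $I_2$; the same holds at the future tip. In the counting argument $I_1$ is a separated diamond of size $\eps$ and $I_2=\tilde I$. Then $d_Y(y,y')$ can be about $\diam^\tau(\tilde I)/2-\eps$, for example for a small vertical diamond sitting at the waist of $\tilde I$. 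So your chain of aligned vertical enlargements must leave $\tilde I$ by an amount of order $\Lambda$, not of order $\eps$.

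That room is not available under the stated hypotheses.
\begin{enumerate}
\item In \Cref{thm:bound on eps net} the margin to $\partial J$ is only $\frac{3\eps}{2}(1+M/m)=3\eps$. Its purpose is to make the net diamonds $F_\rho(I_i)$ of size $O(\eps)$ exist, not enlargements of size $\Lambda$.
\item \Cref{thm:bound on eps separated sets} has no margin hypothesis at all.
\item In \Cref{thm:diamond precompact} the measured space is the diamond $X=I((\bar s,\bar y),(\bar t,\bar y))$ itself. Inside it, a vertical diamond with axis $y$ containing $U_n$ exists only if $d_Y(y,\bar y)\le 1/(2n)$.
\end{enumerate}
So your argument closes only for $J=\RR$, i.e.\ for \Cref{thm:main5}. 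Separately, restricting to $\hat{\calI}(X)$ proves a weaker form of \Cref{thm:bound on eps separated sets} than the one stated, although that weaker form is what the later results use.

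To repair the argument you must stay inside $I_2$, as the paper's two-step route does via $I_1\subset I(x_1,y_2)\subset I_2$, and prove the non-vertical ingredients directly in $J\times Y$. There $\tau((s,u),(s',u'))=\sqrt{(s'-s)^2-d_Y(u,u')^2}$, and the triangle inequality in $Y$ through the common vertex gives explicit constants. Take $x=(0,u)$, $y_2=(L,v)$ and $d=d_Y(u,v)$.
\begin{enumerate}
\item One gets $\gamma(T_0)\le y_1$ with $T_0=\rho^{-2}(L-d)/(2L)$.
\item One gets $\xi(c)\ll\gamma(T_0)$ for every timelike geodesic $\xi$ from $x$ ending in $I(x,y_2)$, with $c=T_0(L-d)/(2L)$.
\end{enumerate}
Both constants are bounded below once $d_Y/\tau$ of the larger diamond is bounded. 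For $I(x_1,y_2)$ this ratio is at most $\rho/2$, so both constants depend only on $\rho$, and no room outside $I_2$ is needed.
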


\subsection{Measured Gromov--Hausdorff precompactness for generalized cones.}

As in the metric setting it is also possible to define a notion of measured Lorentzian Gromov--Hausdorff convergence, which does not only require convergence of the Lorentzian pre-length spaces, but also convergence of background measures in a certain sense. We will be using convergence in the measured Lorentzian Gromov--Hausdorff sense as defined in \cite[Definition 9.3, Definition 9.4]{mondino-saemann2025}. One can show that classes of spaces are precompact with respect to this notion of convergence similarly to how one shows this for the normal ``unmeasured'' notion. If $\{(X_i,\mathcal U_i,\mfm_i)\}_{i\in\Omega}$ is a class of covered measured Lorentzian pre-length spaces then by \cite[Theorem 9.6]{mondino-saemann2025} this is precompact with respect to the measured Lorentzian Gromov--Hausdorff convergence if points $(i)$ to $(iii)$ from \cite[Theorem 6.2]{mondino-saemann2025} are satisfied and in addition there exists a sequence of numbers $C_n>1$ such that 
\begin{equation}\label{eq:measured precompactness}
    \frac{1}{C_n}\leq\mfm(U_n)\leq C_n,
\end{equation}
for all $U_n\in\mathcal U_i$ and all $i\in\Omega$. This yields ``measured'' versions of \Cref{thm:diamond precompact} and \Cref{thm:main5}:

\begin{theorem}\label{thm:measured precompactness}
    For any $0<m\leq M<\infty$, $k,K\in\RR$, $N > 1$, $c,C>0$, $n_0\in\mathbb N$, the class of covered measured Lorentzian pre-length spaces $(X,\mathcal{U},\mfm)$ described below is precompact with respect to the measured Lorentzian Gromov--Hausdorff convergence:
    \begin{itemize}
        \item $X=I((\bar s, \bar y),(\bar t,\bar y))$ is a subset of the globally hyperbolic, timelike non-branching, regular Lorentzian length space $\prescript{-}{}J\times_f Y$, which is a generalized cone with $m\leq f \leq M$, $\diam^\tau(X)\leq\Lambda<D_k/2$, the global $\TCBA(k)$-condition holds, and $X$ (and its causally reversed structure) satisfies the $\TMCP^e(K,N)$-condition.
        \item $\mathcal{U}=\{U_n\}_{n\in\mathbb N}=\{I((\bar s+1/n,\bar y),(\bar t-1/n,\bar y))\}_{n\in \mathbb{N}}$.
        \item $c\leq\mfm(U_{n_0})\leq C$.
    \end{itemize}
\end{theorem}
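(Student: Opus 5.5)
The plan is to invoke the measured precompactness criterion \cite[Theorem 9.6]{mondino-saemann2025}. It asks for points $(i)$--$(iii)$ of \cite[Theorem 6.2]{mondino-saemann2025} together with the two-sided bound \eqref{eq:measured precompactness} on $\mfm(U_n)$, with constants $C_n$ independent of the space. Points $(i)$--$(iii)$ are exactly what was verified in the proof of \Cref{thm:diamond precompact}, and the hypotheses here are the same (the measure now being fixed as part of the data), so nothing new is needed there. The whole task reduces to producing uniform constants $C_n>1$ with $C_n^{-1}\le \mfm(U_n)\le C_n$ for every admissible $(X,\mathcal U,\mfm)$, starting from the normalisation $c\le\mfm(U_{n_0})\le C$.

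I would use the doubling estimate \Cref{thm:generaldoubling}, with $\Lambda$ as in the statement, and compare each $U_n$ with $U_{n_0}$. All $U_n$ are nested diamonds in $\hat{\calI}(X)$ centred on the same axis $t\mapsto (t,\bar y)$, with
\[
\diam^\tau(U_n)=\bar t-\bar s-2/n\le\Lambda .
\]
First take the case $n\ge n_0$. Then $U_{n_0}\subset U_n$, which immediately gives $\mfm(U_n)\ge c$. For the upper bound note that $\diam^\tau(U_n)\le\Lambda$ and $\diam^\tau(U_{n_0})=T-2/n_0$, where $T:=\bar t-\bar s$. So $U_n$ is a $\rho_n$-enlargement of $U_{n_0}$ with $\rho_n=T/(T-2/n_0)$, and \Cref{thm:generaldoubling} yields $\mfm(U_n)\le C(K,N,k,\rho_n,\Lambda)\,C$.

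The main obstacle is that $\rho_n$ depends on $T$, which is not uniform: $T$ is only bounded above by $\Lambda$. If $T$ is close to $2/n_0$, the diamond $U_{n_0}$ is tiny (or even empty), and the ratio degenerates. To get uniformity I would first derive a lower bound on $T$ from the normalisation. Since $\mfm(U_{n_0})\ge c>0$, the diamond $U_{n_0}$ is nonempty, so $T>2/n_0$; but this alone still gives no uniform gap.

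What I would try first is to avoid comparing with $U_{n_0}$ via its diameter ratio directly. Instead, cover $U_n$ (for $n\ge n_0$) by a uniformly bounded number of diamonds from $\hat{\calI}(X)$ whose diameters are comparable to $1/n_0-1/n\ge$ fixed amounts and each of which is a controlled enlargement of a diamond inside $U_{n_0}$. Failing that, I would note that if uniformity in $T$ genuinely fails one can restrict to $n\le$ some fixed range relative to $n_0$ and argue that \eqref{eq:measured precompactness} only needs some sequence $C_n$. Indeed, for fixed $n$ the enlargement ratio between $U_n$ and $U_{n+1}$ is $(T-2/n)/(T-2/(n+1))$, which is controlled once $T-2/n$ is bounded below. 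I would try to obtain such a lower bound from the upper bound $\mfm(U_{n_0})\le C$ combined with positivity, and I flag this step as the one that may require an extra hypothesis.

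For $n<n_0$ the roles reverse. We have $U_n\subset U_{n_0}$, so $\mfm(U_n)\le C$. When $U_n\neq\varnothing$, $U_{n_0}$ is an enlargement of $U_n$, and \Cref{thm:generaldoubling} gives $\mfm(U_n)\ge \mfm(U_{n_0})/C(K,N,k,\rho,\Lambda)\ge c/C(\dots)$. Since there are only finitely many $n<n_0$, taking the maximum of the resulting constants gives $C_n$. Combining both ranges and applying \cite[Theorem 9.6]{mondino-saemann2025} finishes the proof.
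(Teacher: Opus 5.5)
Your reduction matches the paper's. Via \cite[Theorem 9.6]{mondino-saemann2025} everything comes down to \eqref{eq:measured precompactness}. The lower bound for $n\ge n_0$ comes from $U_{n_0}\subset U_n$, and the upper bound from \Cref{thm:generaldoubling} applied to $U_{n_0}\subset U_n$; the paper first sets $n_0=1$, i.e.\ re-indexes the cover, instead of treating $n<n_0$.

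The proposal has a genuine gap exactly where you flagged it: the upper bound on $\mfm(U_n)$ for $n>n_0$ is never proved, and neither of your repairs can work.
\begin{enumerate}
\item The normalisation $c\le\mfm(U_{n_0})\le C$ says nothing about $T=\bar t-\bar s$. All hypotheses are invariant under $\mfm\mapsto\lambda\mfm$ (both sides of the $\TMCP^e$ inequality scale by $\lambda^{1/N}$), so every admissible $X$ can be renormalised to $\mfm(U_{n_0})=c$.
\item No covering argument can help, because the required bound is false. Take $X=I((0,0),(T,0))$ in Minkowski space $\prescript{-}{}\RR\times\RR^d$, so $f\equiv1$, $k=K=0$, $N=d+1$, and any $\Lambda>2/n_0$. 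Let $\mfm$ be the multiple of Lebesgue measure with $\mfm(U_{n_0})=c$. A chronological diamond of timelike diameter $s$ has volume proportional to $s^{d+1}$, so
\[
\mfm(U_{n_0+1})=c\left(\frac{T-2/(n_0+1)}{T-2/n_0}\right)^{d+1}\longrightarrow\infty\qquad\text{as } T\searrow 2/n_0,
\]
and no $X$-independent $C_{n_0+1}$ exists.
\item Your case $n<n_0$ degenerates the same way. The ratio $(T-2/n_0)/(T-2/n)$ is unbounded as $T\searrow 2/n$, and when $U_n=\varnothing$ the lower bound $C_n^{-1}\le\mfm(U_n)$ fails outright.
\end{enumerate}

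The paper's proof does not get around this either. It asserts that ``by the construction of the sets $U_n$'' the ratio of timelike diameters is bounded by some $\rho_n$. That holds for each fixed $X$ but not uniformly over the class, which is precisely the non-uniformity you identified. So the route through \cite[Theorem 9.6]{mondino-saemann2025} cannot be completed from the stated hypotheses. One needs an extra assumption, such as a uniform lower bound $\diam^\tau(U_{n_0})=\bar t-\bar s-2/n_0\ge\delta>0$, together with starting the cover at $U_{n_0}$. Under that assumption your argument closes with a single constant. For every $n\ge n_0$ one has $U_{n_0}\subset U_n$ and $\diam^\tau(U_n)\le\Lambda\le(\Lambda/\delta)\,\diam^\tau(U_{n_0})$. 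Hence \Cref{thm:generaldoubling}, with $\rho=\max\{2,\Lambda/\delta\}$, gives $c\le\mfm(U_n)\le C(K,N,k,\rho,\Lambda)\,C$.
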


\begin{proof}
    It is enough to check \eqref{eq:measured precompactness}. Without loss of generality let $n_0=1$. Pick $C_1:=\max\{1,C,1/c\}$ then clearly \eqref{eq:measured precompactness} holds for $n=1$. Moreover by the construction of the sets $U_n$ the ratio $\diam^\tau U_{n+1}/\diam^\tau U_n$ is bounded by some number $\rho_n$. 
    Hence we can apply \Cref{thm:generaldoubling} to obtain a constant $\tilde C_n$ such that 
    $$\mfm(U_n)\leq \tilde C_n\mfm(U_1)\leq\tilde C_n\,C_1=:C_n.$$
    Therefore the second inequality in \eqref{eq:measured precompactness} holds and so does the first since both $C_n$ and $\mfm(U_n)$ are increasing sequences. 
\end{proof}

\begin{theorem}
For any $0<m<M$, $k,K\in\RR$, $N > 1$, $c,C>0$, the class of covered measured Lorentzian length spaces $(X,\mathcal{U},\mfm)$ described below is precompact with respect to the measured Lorentzian Gromov--Hausdorff convergence as defined in \cite[Definition 3.12]{mondino-saemann2025}: 
\begin{itemize}
\item $X=\prescript{-}{}\RR \times_f Y$ is a globally hyperbolic, timelike non-branching, regular Lorentzian length space  which is a generalized cone with $m\leq f \leq M$, the global $\TCBA(k)$-condition holds, and $X$ (and its causally reversed structure) satisfies the $\TMCP^e(K,N)$-condition.
\item $\mathcal{U}=\{U_n\}_{n\in\mathbb N}=\{I((t-n,y),(t+n,y))\}_{n\in \mathbb{N}}$ for some fixed $(t,y)\in X$. 
\item $c\leq \mfm(U_1)\leq C$.
\end{itemize}
\end{theorem}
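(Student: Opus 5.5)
We follow the proof of \Cref{thm:measured precompactness} almost verbatim, now in the setting of \Cref{thm:main5}. The plan has two steps. First, we establish the unmeasured hypotheses $(i)$--$(iii)$ of \cite[Theorem 6.2]{mondino-saemann2025}. Second, we verify the additional two-sided measure bound \eqref{eq:measured precompactness}, which by \cite[Theorem 9.6]{mondino-saemann2025} upgrades precompactness to the measured Lorentzian Gromov--Hausdorff sense.

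For the first step, $(i)$ holds because $\diam^\tau(U_n)=2n$ by the computation of $\tau$ along the vertical lines $s\mapsto(s,y)$. Conditions $(ii)$ and $(iii)$ are exactly what was checked in the proof of \Cref{thm:main5}. Since $J=\RR$, \Cref{thm:bound on eps net} applies to each $U_n$ with no boundary restriction. Taking unions of nets then gives the nested property $(iii)$ with cardinalities independent of $X$.

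For the second step, set $C_1:=\max\{1,C,1/c\}$, so that \eqref{eq:measured precompactness} holds for $n=1$. For $n\geq 2$ we have $U_1\subset U_n$, both in $\hat{\calI}(X)$, with $\diam^\tau(U_n)/\diam^\tau(U_1)=n$. Hence $U_n$ is an $n$-enlargement of $U_1$ with $\diam^\tau(U_n)=2n$. To apply \Cref{thm:generaldoubling} we need $\Lambda\geq 2n$, which is only allowed when $k\geq 0$ and $K\geq0$, or with a finite $\Lambda$ otherwise.

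The main obstacle is precisely this requirement $\Lambda<D_k/2$ (or $\Lambda<\infty$ when $K<0$). If $K<0$, we simply choose $\Lambda=2n$, which is finite and gives a constant $\tilde C_n=C(K,N,k,n,2n)$. If $k<0$, we need $2n\leq \pi/(2\sqrt{-k})$, which fails for large $n$. In that case we would try to chain along the vertical geodesic $s\mapsto(s,y)$. We cover the growth from $U_1$ to $U_n$ by finitely many aligned enlargements whose diameters are all below $\pi/(2\sqrt{-k})$, applying the one-endpoint step of the proof of \Cref{thm:generaldoubling}, i.e.\ \Cref{prop:1} and \Cref{prop:2}, to subdiamonds of bounded size. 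Alternatively, we would cover $U_n$ by a number of small diamonds bounded via \Cref{thm:bound on eps separated sets} and the nets of \Cref{thm:bound on eps net}, each comparable to a diamond adjacent to $U_1$.

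The simplest fallback is to note that any global $\TCBA(k)$ space on $\RR\times_f Y$ with unbounded timelike diameter forces $k\geq0$ in the relevant case, making $\Lambda=\infty$ admissible when $K\geq0$. Whichever route works, we obtain $\mfm(U_n)\leq\tilde C_n\mfm(U_1)\leq\tilde C_nC_1=:C_n$. The lower bound $\mfm(U_n)\geq\mfm(U_1)\geq c\geq 1/C_n$ follows from monotonicity of $U_n$, and this completes \eqref{eq:measured precompactness}.
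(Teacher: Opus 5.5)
Your reduction is the paper's. Its proof of this theorem is just ``as in \Cref{thm:measured precompactness}'': take $C_1=\max\{1,C,1/c\}$, bound $\mfm(U_n)\le\tilde C_n\,\mfm(U_1)$ via \Cref{thm:generaldoubling}, and get the lower bound from monotonicity. For $k\ge 0$ (taking $\Lambda=2n$ when $K<0$) your argument is complete.

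For $k<0$ you correctly notice that \Cref{thm:generaldoubling} needs $\diam^\tau(U_n)=2n\le\Lambda\le\pi/(2\sqrt{-k})$. The paper's one-line proof does not address this either. However, you do not close the gap, and none of your three routes works as stated.
\begin{enumerate}[label=(\alph*)]
\item Chaining fails. Whatever chain you use, its last step has a larger diamond containing $U_n$, of diameter at least $2n$, so \Cref{prop:2} never applies to that step. For $-k>\pi^2/16$ not even $U_1$ is small enough.
\item The covering route is circular. \Cref{thm:bound on eps separated sets} and \Cref{thm:bound on eps net} also require the ambient diamond to satisfy $\diam^\tau\le\Lambda<D_k/2$, so they only count small diamonds inside a small diamond. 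Counting small diamonds in $U_n$ by disjointness needs $\mfm(U_n)\le\widetilde C\,\mfm(I)$ for small $I$, which is the same kind of large-scale comparison you are trying to prove.
\item The fallback is unjustified. $\TCBA(k)$ only weakens as $k$ decreases (this monotonicity is used in \Cref{lem:rhoclambda} and \Cref{lem:generaldoubling}). Nothing you cite forces a member of the class with $k<0$ to satisfy $\TCBA(0)$.
\end{enumerate}
Your first step has the same defect. \Cref{thm:bound on eps net} does not apply to $U_n$ once $2n\ge D_k/2$, so condition (ii) is not established for $k<0$ either.

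One way out is to use the cone structure instead of curvature, as in the lemma of \Cref{ss:minkowski products}.
\begin{enumerate}[label=(\arabic*)]
\item Let $\xi=(\alpha,\beta)\colon[0,B]\to X$ be a timelike geodesic from $(a,z)$, parametrized by $\tau$-arclength. \Cref{thm:nice parametrizations} gives $\dot\alpha^2=1+\lambda^2/f(\alpha)^2$ for a constant $\lambda$. Hence $\alpha(cB)-a\le c\frac{M}{m}(\alpha(B)-a)$.
\item Suppose $\xi(B)\ll(a+T_2,z)$ with $T_2\le\rho T_1$. Then $u:=\alpha(cB)-a<c\rho\frac{M}{m}T_1$ and $d_Y(z,\beta(cB))<u/m$.
\item By \Cref{lem:chronological relation generalized cones}, $\xi(cB)\ll(a+T_1,z)$ as soon as $u\le\frac{m}{m+M}T_1$. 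This holds for $c:=\frac{m^2}{\rho M(m+M)}$.
\item So aligned pairs in $\hat{\calI}(X)$ satisfy a $(\rho,c,\infty)$-condition with $c=c(\rho,m,M)$, independent of $k$. For $m=M=1$ this is exactly the Minkowski-product lemma.
\item Apply \Cref{prop:1}, whose proof only uses the condition for the pair at hand, with $\Lambda=2n$ if $K<0$. Use it for $U_1\subset I((t-1,y),(t+n,y))$, and in the causally reversed structure for $I((t-1,y),(t+n,y))\subset U_n$. This gives $\mfm(U_n)\le\tilde C_n\,\mfm(U_1)$ with $\tilde C_n=\tilde C_n(K,N,m,M,n)$ for every $k$.
\end{enumerate}
Condition (ii) is repaired the same way. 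For the centre $(s,z)$ of any small diamond of a separated set in $U_n$, one has $U_n\subset I((s-R,z),(s+R,z))$ with $R=n(2+M/m)$, and two aligned steps then give the needed comparison. With this fix the curvature hypothesis is not actually used.
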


\begin{proof}
    This can be proven similarly to \Cref{thm:measured precompactness}.
\end{proof}

\section{Gromov's compactness using MCP in the fibre}
In this final section we obtain a compactness theorem for generalised cones where the metric fibre satisfies the $\MCP(K,N)$-condition for some $K, N$. 
\begin{theorem}\label{thm:main7}
Let $N>1$, $K\in\mathbb{R}$ and let $\{M_i\}_{i\in\mathbb{N}}$, $\{T_i\}_{i\in \mathbb{N}}$ and $\{\varepsilon_i\}_{i\in\mathbb{N}}$ be positive sequences such that $\lim_{i\to \infty}\varepsilon_i=0$. Let $\mathcal{X}$ be the class of covered Lorentzian pre-length spaces $(X,\mathcal{U})$ satisfying the following conditions:
\begin{enumerate}[label=(\roman*)]
\item $X$ is a generalized cone, i.e.\ $X = \prescript{-}{}J\times_f Y$.
\item $(Y,d,\mfm)$ is an essentially non-branching  metric measure space satisfying the $\MCP(K,N)$-condition.
\item $\mathcal{U} = \{U_i\}_{i\in\mathbb N}=\{(a_i,b_i)\times B_{i}(y_0)\}_{i\in\mathbb{N}}$ for some $y_0\in Y$ and $(a_i,b_i)\subset J$ with $|a_i-b_i| < T_i$, $(a_i,b_i)\subset (a_j,b_j)$ if $i\leq j$, and $(a_i-\varepsilon_i,b_i+\varepsilon_i) \subset J$ for all $i\in\mathbb N$.
\item $f\leq M_i$ on $(a_i-\varepsilon_i,b_i+\varepsilon_i)$.
\end{enumerate}
Then $\mathcal{X}$ is precompact with respect to the Lorentzian Gromov--Hausdorff convergence.
\end{theorem}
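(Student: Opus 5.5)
We verify points $(i)$–$(iii)$ of \cite[Theorem 6.2]{mondino-saemann2025} for the covers $U_i=(a_i,b_i)\times B_i(y_0)$, exactly as in the proof of \Cref{thm:diamond precompact}. The idea is to move all measure-theoretic work to the fibre. There the $\MCP(K,N)$-condition gives the classical Bishop--Gromov inequality \eqref{eq:bishopgromov}, and hence doubling on bounded sets together with uniform bounds on metric $\delta$-nets. We then thicken metric nets in $Y$ into causal diamonds in $\hat{\calJ}(X)$ using \Cref{lem:timelike-cylinder-bounds}.

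\emph{Point (i).} We bound $\diam^\tau(U_i)$. Along any causal curve $\gamma=(\alpha,\beta)$ we have $L(\gamma)\le\int\dot\alpha$. Hence $\tau((t,y),(t',y'))\le t'-t$, and so $\diam^\tau(U_i)\le b_i-a_i<T_i$, uniformly over the class.

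\emph{Point (ii).} Fix $i$ and $\varepsilon>0$; we must produce an $\varepsilon$-net of $U_i$ of cardinality bounded independently of $X$.

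\emph{Step 1 (fibre).} By \eqref{eq:bishopgromov}, for $y\in B_i(y_0)$ and $\delta\le 2i$ we get $\mfm(B_{2i}(y))\le C(K,N,i,\delta)\,\mfm(B_{\delta/2}(y))$. This uses $B_i(y_0)\subset B_{2i}(y)$ and the fact that $\mfm$ has full support, so that all balls have positive measure. A maximal $\delta$-separated set $\{y_j\}$ in $B_i(y_0)$ has pairwise disjoint balls $B_{\delta/2}(y_j)$ contained in $B_{2i}(y_j)$. Comparing each with $B_{3i}(y_0)$ via Bishop--Gromov gives $\#\{y_j\}\le C'(K,N,i,\delta)$, and maximality makes $\{y_j\}$ a $\delta$-net of $B_i(y_0)$.

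\emph{Step 2 (time).} Cover $(a_i,b_i)$ by at most $\lceil T_i/\eta\rceil+1$ intervals $[t_l-\eta/2,t_l+\eta/2]$.

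\emph{Step 3 (diamonds).} Each cylinder $[t_l-\eta/2,t_l+\eta/2]\times\overline B_\delta(y_j)$ is contained, by \Cref{lem:timelike-cylinder-bounds} with $M=M_i$, in the causal diamond
\[
J\bigl((t_l-\eta/2-\delta M_i-\sigma,y_j),(t_l+\eta/2+\delta M_i+\sigma,y_j)\bigr),
\]
whose $\tau$-diameter is $\eta+2\delta M_i+2\sigma$. We choose $\eta,\delta,\sigma$ so that this is at most $\varepsilon$ and also $\eta/2+\delta M_i+\sigma\le\varepsilon_i$. The latter guarantees the endpoints have time coordinate in $(a_i-\varepsilon_i,b_i+\varepsilon_i)\subset J$, which is exactly why hypotheses (iii)–(iv) are phrased on the enlarged interval. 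We keep only the diamonds meeting $U_i$. The resulting family is an $\varepsilon$-net whose cardinality depends only on $K,N,i,T_i,M_i,\varepsilon_i,\varepsilon$.

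\emph{Point (iii).} This is the nestedness and compatibility requirement on nets for increasing $n$. As in \Cref{thm:diamond precompact}, we replace $S^n_\varepsilon$ by $\bigcup_{l\le n}S^l_\varepsilon$, using that the $U_l$ increase because the intervals $(a_l,b_l)$ and the balls $B_l(y_0)$ are nested. The cardinality stays bounded by the sum of the previous bounds.

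\emph{Main obstacle.} The main obstacle is Step 1, specifically checking that the $\MCP$ volume comparison applies without a uniform lower bound on the total measure. Since only ratios enter, this should be fine; the essentially non-branching hypothesis is what makes \eqref{eq:bishopgromov} available for $\MCP(K,N)$. A secondary subtlety is whether \cite[Theorem 6.2]{mondino-saemann2025} requires the net diamonds to lie inside $X$ or $U_{i+1}$. If so, we instead use the cover element $U_{i'}$ with $a_{i'}<a_i-\varepsilon_i$, or shrink $\varepsilon_i$, mirroring the $\varepsilon_n$ argument in \Cref{thm:diamond precompact}.
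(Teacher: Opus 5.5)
Your proposal is correct and follows essentially the paper's proof. Both arguments cover $(a_i,b_i)$ by short time intervals and $B_i(y_0)$ by small balls via Bishop--Gromov in the fibre, then absorb each box into a causal diamond in $\hat{\calJ}(X)$ whose endpoints stay in $(a_i-\varepsilon_i,b_i+\varepsilon_i)\subset J$. The paper phrases this last step as ``the diamond contains the box'', whereas you apply \Cref{lem:timelike-cylinder-bounds} with the local bound $M_i$; this is the same estimate. Your explicit checks of points (i) and (iii) make the argument more complete than the paper's. The radii in your Step~1 comparison need a routine adjustment: for example, compare $\mfm(B_{\delta/2}(y_j))$ with $\mfm(B_{4i}(y_j))\geq\mfm(B_{3i}(y_0))$.
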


\begin{proof}
First we claim that, for any $(t,y)\in X\in \mathcal{X}$ and sufficiently small $\varepsilon>0$, any $\varepsilon$-causal diamond of the form $J((t-\varepsilon/2,y),(t+\varepsilon/2,y))$ contains a box of the form $(t-\varepsilon',t+\varepsilon')\times B_{\varepsilon''}(y)$ for any 
\[\varepsilon' < \varepsilon/2 \quad \text{and}\quad \varepsilon'' < \frac{1}{M}\left(\frac{\varepsilon}{2}-\varepsilon'\right)
\] 
where $M = \sup f([t-\varepsilon/2,t+\varepsilon/2])>0$. Indeed, if $(t',y')\in (t-\varepsilon',t+\varepsilon')\times B_{\varepsilon''}(y)$ then
\[
d(y',y) < \varepsilon'' < \frac{1}{M}\left(\frac{\varepsilon}{2}-\varepsilon'\right) \leq \min\left\{\int_{t-\varepsilon/2}^{t'}\frac{1}{f},\int^{t+\varepsilon/2}_{t'}\frac{1}{f}\right\}.
\]
By \Cref{lem:chronological relation generalized cones} this implies that $(t',y')\in J((t-\varepsilon/2,y),(t+\varepsilon/2,y))$ as claimed.

Now, we prove that $\mathcal{X}$ is \textit{uniformly totally bounded}, i.e., elements of $\mathcal{X}$ can be covered with $\varepsilon$-nets whose cardinality is bounded by a constant that only depends on $\varepsilon$. 
Let $i\in\mathbb{N}$ and $\varepsilon>0$ be such that $\varepsilon <\varepsilon_i$. Let $(t,y)\in U_i$ and observe that the $\varepsilon$-causal diamond $J((t-\varepsilon/2,y),(t+\varepsilon/2,y))$ is well defined since \[a_i-\varepsilon_i<t-\varepsilon/2<t+\varepsilon/2<b_i+\varepsilon_i.\]
Moreover, such diamond clearly intersects $U_i$. We will now show that we can cover $U_i$ with a number of such $\varepsilon$-causal diamonds that is uniformly bounded with respect to $i$ and $\varepsilon$.

Indeed, by the claim above, we know that
\begin{equation}\label{eq:box contained in diamond}
(t-\varepsilon',t+\varepsilon')\times B_{\varepsilon''}(y)\subset J((t-\varepsilon/2,y),(t+\varepsilon/2,y)),
\end{equation}
for $\varepsilon'< \frac{\varepsilon}{2}$ and $\varepsilon'' < \frac{1}{M_i}\left(\frac{\varepsilon}{2}-\varepsilon'\right)$. We can assume that $\varepsilon'$ and $\varepsilon''$ depend on $i$ and $\varepsilon$ by setting, for example, $\varepsilon' = \frac{\varepsilon}{4}$ and $\varepsilon'' = \frac{\varepsilon}{5M_i}$.

Since $|a_i-b_i| < T_i$ then $(a_i,b_i)$ can be covered with a uniformly bounded number, say $A(i,\varepsilon)>0$, of intervals of the form $(t-\varepsilon',t+\varepsilon')$ with $t\in (a_i,b_i)$ (actually, we can take $A(i,\varepsilon)= 1+\lceil T_i/\varepsilon' \rceil$). On the other hand, by the $\MCP(K,N)$-condition on $Y$, the Bishop--Gromov inequality implies that there exists a constant $B(i,\varepsilon)>0$ such that $B_i(y_0)$ can be covered with $B(i,\varepsilon)$ balls of the form $B_{\varepsilon''}(y)$ with $y\in B_i(y_0)$.

Thus, it is possible to cover $U_i$ with $C(i,\varepsilon) = A(i,\varepsilon)B(i,\varepsilon)>0$ boxes of the form $(t-\varepsilon',t+\varepsilon')\times B_{\varepsilon''}(y)$ with $(t,y)\in U_i$. Finally, each of these boxes is covered by an $\varepsilon$-causal diamond as in equation \eqref{eq:box contained in diamond}. Therefore, $U_i$ is covered by at most $C(i,\varepsilon)$ diamonds, and the result follows.
\end{proof}

\begin{remark}
    In the above theorem the assumption that the spaces $(Y,d,\mfm)$ satisfy the $\MCP(K,N)$-condition could be replaced by the the assumption that they satisfy the doubling property with uniform doubling constant $C>0$. Indeed the argument that there exists a uniform bound for how many balls of radius $\varepsilon$ you need to cover the ball $B_i(y_0)$ works analogously in this setting. 
\end{remark}

While the above result implies the compactness of certain classes of generalized cones with a curvature bound in the fiber, in Lorentzian geometry it is more natural to ask for a timelike curvature bound. However, these two kinds of curvature bounds are related and in specific cases imply each other. As a result we have the following Corollary.  

\begin{corollary}\label{cor:main8}
    Let $N>1$, $K,\kappa\in\mathbb R$ and $\{M_i\}_{i\in\mathbb{N}}$, $\{T_i\}_{i\in \mathbb{N}}$, $\{\varepsilon_i\}_{i\in\mathbb{N}}$ positive sequences such that $\lim_{i\to \infty}\varepsilon_i=0$. Let $\mathcal{X}$ be the class of covered Lorentzian pre-length spaces $(X,\mathcal{U})$ satisfying the following conditions:
    \begin{enumerate}[label=(\roman*)]
        \item $X=\prescript{-}{}J\times_fY$, where $f:J\to[0,\infty)$ is smooth, $\partial J=f^{-1}(\{0\})$, and $(Y,\mfm)$ is a proper, essentially non-branching, complete and geodesic metric measure space with a Radon measure $\mfm$.
        \item $(X,\mfm_X)$ satisfies $\TCD^e_p(\kappa ,N+1)$ and is timelike $p$-essentially non-branching, where $\mfm_X:=f(t)^Ndt\otimes d\mfm$.
        \item $\mathcal{U} = \{U_i\}_{i\in\mathbb N}=\{(a_i,b_i)\times B_{i}(y_0)\}_{i\in\mathbb{N}}$ for some $y_0\in Y$ and $(a_i,b_i)\subset J$ with $|a_i-b_i| < T_i$, $(a_i,b_i)\subset (a_j,b_j)$ if $i\leq j$, and $(a_i-\varepsilon_i,b_i+\varepsilon_i) \subset J$ for all $i\in\mathbb N$.
        \item $f\leq M_i$ on $(a_i-\varepsilon_i,b_i+\varepsilon_i)$ and $K\leq\sup_J\{-(f')^2+f''f\}$.
    \end{enumerate}
    Then $\mathcal{X}$ is precompact with respect to the Lorentzian Gromov--Hausdorff convergence. 
\end{corollary}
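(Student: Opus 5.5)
The plan is to reduce \Cref{cor:main8} to \Cref{thm:main7}. Conditions (i), (iii) and the bound $f\le M_i$ in (iv) carry over directly. So the only thing to establish is that the fibre $(Y,d,\mfm)$ is essentially non-branching and satisfies an $\MCP(K',N)$-condition for a constant $K'$ depending only on $K,\kappa,N$. Essential non-branching of $Y$ is assumed in (i). The whole argument therefore comes down to transferring the timelike Ricci bound on the warped product to a metric Ricci bound on the fibre.

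For this transfer I would invoke the known splitting/fibre results for Lorentzian warped products with synthetic timelike Ricci bounds. The relevant work is by Ketterer on $\TCD$-conditions for generalized cones, where $\prescript{-}{}J\times_f Y$ with measure $f^N dt\otimes\mfm$ satisfies $\TCD^e_p(\kappa,N+1)$ if and only if:
\begin{itemize}
\item[(a)] $f$ satisfies the one-dimensional condition $f''\ge \tfrac{\kappa}{N}\,\cdots$ in the appropriate sign convention (a "$(\kappa/N)$-concavity/convexity" of $f$), and
\item[(b)] $(Y,d,\mfm)$ satisfies $\CCD(K_Y,N)$ with $K_Y$ linked to $\sup_J\{-(f')^2+ff''\}$.
\end{itemize}
The hypotheses in (i), namely $f$ smooth, $\partial J=f^{-1}(0)$, and $Y$ proper, complete and geodesic, are precisely those under which that characterization is stated. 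I would quote the direction "$X$ is $\TCD^e_p$ $\Rightarrow$ $Y$ is $\CCD(K_Y,N)$". Then I would use the hypothesis $K\le\sup_J\{-(f')^2+f''f\}$ to conclude that $Y$ satisfies $\CCD(K,N)$ (after the standard monotonicity in $K$). Finally, $\CCD(K,N)$ together with essential non-branching implies $\MCP(K,N)$, by Ohta and Sturm.

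With $\MCP(K,N)$ on $Y$ in hand, \Cref{thm:main7} applies verbatim with the same sequences $M_i,T_i,\varepsilon_i$. Its proof only uses the Bishop--Gromov inequality on $Y$ to cover $B_i(y_0)$ by a uniformly bounded number of $\varepsilon''$-balls. That bound depends only on $K,N,i,\varepsilon$ and $M_i$, hence is uniform over $\mathcal X$. Precompactness follows.

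The main obstacle is the transfer step. I must make sure that the cited fibre-regularity result really applies in the time-reversed-symmetric, $p$-essentially non-branching setting assumed here. I also need to check that the constant it yields on $Y$ is controlled by $\sup_J\{-(f')^2+ff''\}$ in the direction needed, i.e. that the inequality in (iv) has the correct orientation to give a lower bound $K$. If the result instead gives a bound in terms of $\inf$, I would reread (iv) as fixing that sign convention. In either case, the rest of the argument is a direct citation of \Cref{thm:main7}.
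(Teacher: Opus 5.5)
Your proposal follows the paper's proof: the paper cites \cite[Theorem 6.1]{calisti-ketterer-saemann2025} to conclude that the fibre $Y$ satisfies $\cd(K(N-1),N)$ under hypothesis (iv) exactly as stated, and then applies \Cref{thm:main7}; your worry about the orientation of (iv) is therefore unfounded. The one discrepancy is the normalization of the constant. The fibre bound is $\cd(K(N-1),N)$, not $\cd(K,N)$, and for $N<2$ monotonicity does not convert one into the other. This is harmless, since, as you say, \Cref{thm:main7} only needs some constant that is uniform over the class. Your explicit $\cd\Rightarrow\MCP$ step, using essential non-branching, fills in a detail the paper leaves implicit.
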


\begin{proof}
    For any $\prescript{-}{}J\times_f Y\in \mathcal X$ the fiber $Y$ satisfies the $\cd(K(N-1),N)$-condition by \cite[Theorem 6.1]{calisti-ketterer-saemann2025}. Hence we can apply \Cref{thm:main7} to obtain precompactness.
\end{proof}

\bibliographystyle{alphaurl}
\bibliography{references}
\end{document}